\newtheorem{definition}{Definition}[section]
\newtheorem{proposition}{Proposition}[section]
\newtheorem{theorem}{Theorem}[section]
\newtheorem{lemma}[proposition]{Lemma}
\newtheorem{remark}{Remark}[section]
\newcommand{\R}{\mathbb{R}}
\renewcommand{\S}{\mathcal{S}}
\newcommand{\dif}{\mathrm{d}}
\numberwithin{equation}{section}
\numberwithin{equation}{section}
\title{\textbf{Solutions to Sobolev Supercritical Nonlinear Schrödinger Equations on an Annulus via a Hopf Reduction Method}}
\author{Jian Liang, Hua-Yang Wang\footnote{Corresponding author.}} 
\date{}
\begin{document}

\bibliographystyle{abbrv}

\maketitle

\begin{abstract}
   This paper investigates the existence of positive solutions with a prescribed mass for nonlinear Schrödinger equations on an annulus, possibly in the Sobolev supercritical regime. 
   A reduction method based on the Hopf fibration is used to transform the problem into a lower-dimensional one. 
   We obtain a new mass critical threshold and we show that in the new mass subcritical or critical regimes there exists a positive solution which corresponds to a global minimizer, while in the mass supercritical regime, there exists two positive solutions which correspond to a local minimizer and a mountain pass solution respectively. 
   Some other problems are also discussed in this paper.
\end{abstract}

\textbf{Keywords:}~Hopf reduction, mass critical threshold, Sobolev supercritical, variational methods.

\textbf{MSC(2020):} 35J20, 35B33, 35B09.

\section{Introduction}

\subsection{Motivations and backgrounds}
Let $\Omega \subset \mathbb{R}^N$ be a bounded domain with $N \geq 3$. 
The time-dependent nonlinear Schrödinger equation
\begin{equation}\label{eq:1.1}
    \begin{cases}
        i \frac{\partial \psi}{\partial t}+\Delta \psi+ |\psi|^{p-2} \psi =0, \quad & (t, x) \in \mathbb{R} \times \Omega, \\
        \psi(t, x)=0, \quad & (t, x) \in \mathbb{R} \times \partial \Omega, \\
        \psi(0, x)=\psi_0(x), \quad & x \in \Omega,
    \end{cases}
\end{equation}
arises naturally in various physical phenomena, such as nonlinear optics \cite{Agrawal2000}, plasma physics \cite{Burke1994} and Bose-Einstein condensates \cite{Gadi2001}. 
If we look for standing waves of the form $\psi(t, x)=e^{-i \lambda t} u(x)$, then $u$ solves the following elliptic problem
\begin{equation}\label{eq:DP}
    \begin{cases}
        -\Delta u + \lambda u= |u|^{p-2} u & \text { in } \Omega, \\ 
        u=0 & \text { on } \partial \Omega.
    \end{cases}
\end{equation}

A typical way to investigate the elliptic problem \eqref{eq:DP} is the variational method.
Given $\lambda \in \mathbb{R}$, solutions to \eqref{eq:DP} can be viewed as critical points of the action functional
\begin{equation*}
    I(u)=\frac{1}{2} \int_\Omega |\nabla u|^2 \, \dif x + \frac{\lambda}{2} \int_\Omega |u|^2 \, \dif x - \frac{1}{p} \int_\Omega |u|^p \, \dif x.
\end{equation*}
It is known that for $2 < p < 2^* := \frac{2N}{N-2}$ (the Sobolev critical exponent), \eqref{eq:DP} has a positive solution for every $\lambda > -\lambda_1(\Omega)$, where $\lambda_1(\Omega)$ is the first eigenvalue of $-\Delta$ in $H^1_0(\Omega)$ \cite{Willem1996}. 
However, if $p \geq 2^*$ and $\Omega$ is star-shaped, then the Pohozaev identity \cite{Pohozaev1965}
\begin{equation*}
    \left(\frac{1}{2}-\frac{1}{p}\right) \int_\Omega |\nabla u|^2 \, \dif x + \frac{\lambda}{2} \int_\Omega |u|^2 \, \dif x = \frac{1}{2N} \int_{\partial \Omega} (x \cdot \nu) |\nabla u|^2 \, \dif S
\end{equation*}
implies that \eqref{eq:DP} has no nontrivial solutions for any $\lambda \geq 0$. 
Therefore, for any star-shaped domain, we cannot hope to seek a solution to \eqref{eq:DP} when $\lambda \geq 0$ with $p$ being Sobolev critical or supercritical. 

Nevertheless, if $\Omega$ possesses a nontrivial topology, such as containing `a hole', the situation becomes substantially different.
The case $p = 2^*$ was studied by the seminal work of Bahri and Coron \cite{Bahri1988}. 
They proved that if $\Omega$ is a bounded domain with nontrivial $\mathbb{Z}_2$ reduced homology group, then \eqref{eq:DP} has a positive solution for every $\lambda > 0$.
The problem is much more complicated if $p > 2^*$. 
As the Sobolev embedding $H^1_0(\Omega) \hookrightarrow L^p(\Omega)$ does not hold for $p > 2^*$, the functional $I$ is not well-defined in $H^1_0(\Omega)$, and hence the variational methods cannot be used directly. 
However, in specific domains, we can still establish the existence of positive solutions to \eqref{eq:DP} for certain Sobolev supercritical ranges.
When $\Omega = \{x \in \mathbb{R}^N: a < |x| < b, 0 < a < b < +\infty\}$ is an annulus, due to the rich symmetry structure, a special coordinate based on the Hopf fibration was introduced by Ruf and Srikanth in \cite{Ruf2010} for $N = 4$ to deal with the following singularly perturbed problem
\begin{equation}\label{eq:SPP}
    \begin{cases}
        -\varepsilon^2 \Delta u + u= u^{p-1} & \text { in } \Omega, \\
        u =0, & \text { on } \partial \Omega,
    \end{cases}
\end{equation} 
and the problem \eqref{eq:SPP} can be reduced to the following problem in $\R^3$ with a weight
\begin{equation}\label{eq:1.3}
    \begin{cases}
        -\varepsilon^2 \Delta v + \frac{1}{2|x|} v= \frac{1}{2|x|} v^{p-1} & \text { in } \widetilde{\Omega}, \\
%       v(x)>0, & \text { in } \widetilde{\Omega}, \\
        v =0, & \text { on } \partial \widetilde{\Omega},
    \end{cases}
\end{equation}       
where $\widetilde{\Omega}=\left\{x \in \mathbb{R}^3: a^2 / 2< |x| <b^2 / 2\right\}$ is also an annulus (the details of the reduction will be discussed in Section 2).
Applying methods and results in \cite{Ni1995} to \eqref{eq:1.3}, they were able to show that there exist a sequence of solutions to \eqref{eq:SPP} concentrating in an $S^1$ orbit as $\varepsilon \rightarrow 0^+$ for every $p \in (2,6)$.
%The classical singularly perturbed result \cite{Ni1995} gave a solution to \eqref{eq:1.3} concentrating at a point as $\varepsilon \rightarrow 0^+$ for every $p \in (2,6)$. 
%By reversing the reduction process, they were able to show that there is a solution concentrating in an $S^1$ orbit as $\varepsilon \rightarrow 0^+$ for every $p \in (2,6)$. 
Notice that $6$ is larger than the Sobolev critical exponent $4$ in the original problem. 
In summary, the reduction method provides a way to deal with some Sobolev supercritical problems. 
Such a method has been generalized in \cite{Ruf2014,Manna2014,Santra2016,Manna2022,Liu2022} to investigate solutions to \eqref{eq:DP} in certain Sobolev supercritical ranges.

There is another perspective to study the problem \eqref{eq:DP}. 
Two conserved quantities of \eqref{eq:1.1} are important, that is, the energy
\begin{equation*}
    E(\psi)=\frac{1}{2} \int_\Omega |\nabla \psi|^2 \, \dif x - \frac{1}{p} \int_\Omega |\psi|^p \, \dif x
\end{equation*}
and the mass
\begin{equation*}
    M(\psi)=\int_\Omega |\psi|^2 \, \dif x.
\end{equation*}
Therefore, investigating solutions with a prescribed mass is a natural endeavor; in other words, we aim to find solutions to \eqref{eq:DP} satisfying the constraint
\begin{equation*}
    \int_\Omega |u|^2 \, \dif x = c^2
\end{equation*}
for some fixed $c > 0$.
We shall call such problems the \textbf{prescribed mass problem} and corresponding solutions \textbf{normalized solutions}.
The stability of solutions to \eqref{eq:1.1} is closely related to the existence of normalized solutions to \eqref{eq:DP} \cite{Cazenave1982}. 
The prescribed mass problem can be restated as
\begin{equation}\label{eq:OP}
    \begin{cases}
        -\Delta u + \lambda u= |u|^{p-2} u & \text { in } \Omega, \\ 
        u=0 & \text { on } \partial \Omega, \\
        \int_\Omega |u|^2 \, \dif x = c^2.
    \end{cases}
\end{equation}
Solutions to \eqref{eq:OP} can be viewed as critical points of the energy functional $E: H^1_0(\Omega) \rightarrow \mathbb{R}$ restricted to the manifold
\begin{equation*}
    S_c = \{u \in H^1_0(\Omega): \int_\Omega |u|^2 \, \dif x = c^2\},
\end{equation*}
where $E$ is defined by
\begin{equation*}
    E(u)=\frac{1}{2} \int_\Omega |\nabla u|^2 \, \dif x - \frac{1}{p} \int_\Omega |u|^p \, \dif x
\end{equation*}
and $\lambda$ appears here as an unknown Lagrange multiplier. 
According to the different variational structures of $E$ on $S_c$, the mass critical exponent
\begin{equation*}
    2^{\#}_N := 2 + \frac{4}{N}
\end{equation*}
plays an important role in the prescribed mass problem. 
Indeed, by the Gagliardo-Nirenberg inequality (see \eqref{eq2.5}), the functional $E$ is bounded from below on $S_c$ if $2 < p < 2^{\#}_N$, while $E$ is unbounded from below on $S_c$ if $ 2^{\#}_N < p < 2^*$. If $p = 2^{\#}_N$, the lower bounds of $E$ on $S_c$ depends on the value of $c$.

For $2 < p < 2^{\#}_N$, it is not hard to prove the existence of a positive solution to \eqref{eq:OP} by a minimizing progress since the Sobolev embedding $H^1_0(\Omega) \hookrightarrow L^p(\Omega)$ is compact (for example, see \cite{Pierotti2017}).  
For $2^{\#}_N < p < 2^*$, Noris, Tavares and Verzini \cite{Noris2014} first studied \eqref{eq:OP} when $\Omega$ is a unit ball. 
%Using an Ambrosetti-Prodi type argument, 
They proved the existence of at least two positive solutions for $c > 0$ small enough. 
Pierotti and Verzini \cite{Pierotti2017} investigated \eqref{eq:OP} if $\Omega$ is a general bounded domain via a different method. 
They showed that for $c > 0$ small enough, there exist at least two positive solutions, one corresponds to a local minimizer of $E$ on $S_c$ and the other one is of mountain pass type \footnote{The proof about the existence of the mountain pass type solution in \cite{Pierotti2017} contained a gap, which was filled in \cite{Pierotti2025}.}.
The result was then generalized by Pierotti, Verzini and Yu \cite{Pierotti2025} to deal with the Sobolev critical case, where an energy estimate is performed to overcome the lack of compactness. 
According to the above discussion, it is natural to consider the following open problem:

\textbf{If $\Omega$ is a bounded domain with a nontrivial topology, does \eqref{eq:OP} have a positive solution for $p > 2^*$?} 

We remark that a related open problem was proposed by Rabinowitz for the Dirichlet problem \eqref{eq:DP}, which was stated by Brezis in \cite{Brezis1986}.
The above problem can be seen as a prescribed mass version of Rabinowitz's open problem. 
We shall give an affirmative answer to it when $\Omega \subset \R^N$ is an annulus with $N = 4,8,16$.
Indeed, we have studied a more general problem for dimensions $N \geq 3$ in the intermediate steps and we will address why it cannot be applied to other dimensions in the final section.

\subsection{Main results}

%Inspired by the discussion above, we will use the reduction method based on the Hopf fibration in \cite{Ruf2010, Ruf2014} to deal with problem \eqref{eq:OP} for $\Omega$ being an annulus with $N = 4,8,16$. 
Inspired from the Hopf fibration in \cite{Ruf2010, Ruf2014}, the problem \eqref{eq:OP} with $N = 4,8,16$ can be reduced to the following problem in a lower dimension case
\begin{equation}\label{eq:RPP}
    \begin{cases}
        -\Delta v + \frac{\lambda}{2|x|} v= \frac{1}{2|x|} v^{p-1} & \text { in } D, \\
        v(x) = 0, & \text { on } \partial D, \\
        \int_D \frac{1}{2|x|} v^2 \, \dif x = \hat{c}^2,
    \end{cases}
\end{equation}
where $D \subset \mathbb{R}^{M}$ is also an annulus, $M = 3,5,9$ and $\hat{c}$ is another constant determined by $c$ (see details in Section 2). 
Therefore, to obtain the existence results for \eqref{eq:OP}, it suffices to study the existence of positive solutions to \eqref{eq:RPP}. 
More generally, we will study the existence and stability of positive solutions for a more general class of equations
\begin{equation}\label{eq:RGP}
    \begin{cases}
        -\Delta u + \lambda |x|^\mu u= |x|^\mu f(u) & \text { in } D, \\ 
        u(x)=0, & \text { on } \partial D, \\
        \int_D |x|^\mu u^2 \, \dif x = c^2,
    \end{cases}
\end{equation}
where $D=\{x \in \mathbb{R}^{M}: a<|x|<b, 0<a<b<+\infty\}$ is an annulus, $M\geq 3$, $c > 0$ is a prescribed constant, $\mu \in \mathbb{R}$. Solutions of \eqref{eq:RGP} can be viewed as critical points of the energy functional
\begin{equation*}
    E(u)=\frac{1}{2} \int_D |\nabla u|^2 \, \dif x - \int_D |x|^\mu F(u) \, \dif x
\end{equation*}
on the manifold
\begin{equation*}
    {\S}_c = \{u \in H^1_0(D): \int_D |x|^\mu u^2 \, \dif x = c^2\},
\end{equation*}
where $F(u) = \int_0^u f(s) \, \dif s$.
That is, every $u \in {\S}_c$ satisfying $E'|_{{\S}_c}(u) = 0$ is a solution to \eqref{eq:RGP} for some $\lambda \in \mathbb{R}$.
It is worth noting that when we set $\mu = -1$ and $f(u) = |u|^{p-2} u$,  \eqref{eq:RGP} goes back to \eqref{eq:RPP} in $M = 3,5,9$ up to a constant.

We will assume the following conditions on the nonlinearity $f$:

($\mathbf{A_1}$) $f \in C^1([0,+\infty)), f(s)>0$ for $s>0$, and $\lim\limits_{s \rightarrow 0^+} \frac{f(s)}{s} = 0$.

($\mathbf{A_2}$) There exists some $2 < q < \frac{2M}{M-2}$ and $a_0 > 0$ such that
\begin{equation*}
\lim _{s \rightarrow+\infty} \frac{f(s)}{s^{q-1}} = a_0.
\end{equation*}

($\mathbf{A_3}$) There exists a constant $a_1 > a_0 (q-1)$ and some $R >0$ such that
\begin{equation*}
    a_1 |s|^{q-2} \leq f'(s), \quad \text{ for } s \geq R,
\end{equation*}
Without loss of generality, we will also assume that $f$ is extended to the whole real line by setting $f(s) = 0$ for $s < 0$.

Under the above assumptions, the functional $E$ is well-defined and of class $C^2$ on $\S_c$. 

\begin{remark}
    Notice that there are many nonlinearities satisfying the above conditions. 
    For example:
    \begin{enumerate}
        \item[$(i)$] The pure power nonlinearity $f(u) = C u^{p-1}$, where $C > 0$ and $2 < p < \frac{2M}{M-2}$;
        \item[$(ii)$] The mixed power nonlinearity $f(u) = C_1 u^{p_1-1} + C_2 u^{p_2-1}$, where $C_1, C_2 >0$ and $2 < p_1,p_2 < \frac{2M}{M-2}$.
    \end{enumerate}
\end{remark}

The main result of this paper is the following theorem:
\begin{theorem}\label{thm1.2}
    Let $M \geq 3$.
    Assume that $f$ satisfies the assumptions ($\mathbf{A_1}$) and ($\mathbf{A_2}$). 
    Denote the mass critical exponent in dimension $M$ by $2^{\#}_M = 2 + \frac{4}{M}$, then the following conclusions hold:
    \begin{enumerate}
        \item[$(i)$] If $2 < q < 2^{\#}_M$, then for every $c > 0$, \eqref{eq:RGP} has a positive solution, which corresponds to a global minimizer of $E$ on ${\S}_c$;
        \item[$(ii)$] If $q = 2^{\#}_M$, then there exists some $c_1 > 0$ such that for every $0 < c < c_1$, \eqref{eq:RGP} has a positive solution, which corresponds to a global minimizer of $E$ on ${\S}_c$;
        \item[$(iii)$] If $2^{\#}_M < q < \frac{2M}{M-2}$, then there exists some $c_2>0$ such that if $0< c < c_2$, then \eqref{eq:RGP} has a local minimizer of $E$ on ${\S}_c$. 
        If in addition $f$ satisfies the assumption ($\mathbf{A_3}$), then there exists a second positive solution, which is of mountain pass type for $E$ on ${\S}_c$.
    \end{enumerate}
\end{theorem}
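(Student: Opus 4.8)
The plan is to treat all three regimes within a single variational framework, exploiting that on the annulus $D$ the weight is uniformly comparable to a constant: $0 < m := \min_{\bar D}|x|^\mu \le |x|^\mu \le \max_{\bar D}|x|^\mu =: M_0 < \infty$. Consequently the weighted constraint is equivalent to an $L^2$ bound, $c^2/M_0 \le \|u\|_{L^2}^2 \le c^2/m$ on $\S_c$, and every weighted integral is controlled by its unweighted counterpart. The one analytic tool underlying everything is the Gagliardo--Nirenberg inequality \eqref{eq2.5}, $\|u\|_{L^q}^q \le C\|\nabla u\|_{L^2}^{q\gamma}\|u\|_{L^2}^{q(1-\gamma)}$ with $\gamma = \frac{M(q-2)}{2q}$, so that $q\gamma = \frac{M(q-2)}{2}$ and the sign of $q\gamma - 2$ is exactly that of $q - 2^{\#}_M$. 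Combining this with the growth bound $F(s) \le \tfrac{\eps}{2}s^2 + C_\eps s^q$ (for every $\eps>0$) coming from $(\mathbf{A_1})$ and $(\mathbf{A_2})$, I obtain the master estimate $E(u) \ge \tfrac12\|\nabla u\|_{L^2}^2 - A\,c^{q(1-\gamma)}\|\nabla u\|_{L^2}^{q\gamma} - Bc^2$ on $\S_c$.

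For $(i)$ and $(ii)$ I would argue by the direct method. When $2 < q < 2^{\#}_M$ one has $q\gamma < 2$, so the master estimate makes $E$ coercive and bounded below on $\S_c$ for every $c$; when $q = 2^{\#}_M$ one has $q\gamma = 2$ and the leading term becomes $(\tfrac12 - Ac^{q-2})\|\nabla u\|_{L^2}^2$, which is coercive precisely when $c$ is below some $c_1>0$. In both cases a minimizing sequence is bounded in $H^1_0(D)$; since $q < \frac{2M}{M-2}$ the embedding $H^1_0(D)\hookrightarrow L^q(D)$ is compact, giving weak $H^1_0$-convergence with strong $L^2$- and $L^q$-convergence along a subsequence. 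Strong $L^2$-convergence preserves the constraint, weak lower semicontinuity handles the Dirichlet term, and strong $L^q$-convergence handles the nonlinear term, so the infimum is attained. Replacing the minimizer $u$ by $|u|$ leaves both $E$ and the constraint unchanged (here we use that $f$, hence $F$, vanishes on $(-\infty,0]$), so the minimizer may be taken nonnegative; it is a constrained critical point, solves \eqref{eq:RGP} with Lagrange multiplier $\lambda$, and the strong maximum principle upgrades it to a strictly positive solution.

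For the local minimizer in $(iii)$, where $q\gamma > 2$ and $E$ is no longer bounded below, I would realize the potential well directly from the master estimate. Writing $t = \|\nabla u\|_{L^2}^2$, the right-hand side is $\phi_c(t) = \tfrac12 t - Ac^{q(1-\gamma)}t^{q\gamma/2} - Bc^2$, which, since $q\gamma/2 > 1$, rises to a strictly positive maximum whose height tends to $+\infty$ as $c\to 0$. Fixing any $\rho>0$ and setting $\boA_\rho = \{u \in \S_c : \|\nabla u\|_{L^2}^2 \le \rho\}$, the competitor $u = cw$ with $w\in\S_1$ fixed satisfies $\|\nabla(cw)\|_{L^2}^2 = c^2\|\nabla w\|_{L^2}^2 \to 0$ and $E(cw)\to 0$, while on the boundary $\{\|\nabla u\|_{L^2}^2 = \rho\}$ one has $E \ge \phi_c(\rho) \to \rho/2 > 0$; hence for all $c < c_2$ the infimum of $E$ over $\boA_\rho$ lies strictly below its boundary value. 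The same compactness argument as above (again using $q < \frac{2M}{M-2}$) shows the infimum is attained at an interior point, which is therefore a local minimizer of $E$ on $\S_c$ and a positive solution of \eqref{eq:RGP}.

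The second, mountain-pass solution in $(iii)$ is the delicate part. The geometry is easy: concentrating a fixed profile $\phi\in C_c^\infty(D)$ as $\phi_n = n^{M/2}\phi(n(\cdot - x_0))$ and renormalizing onto $\S_c$ produces functions with $\|\nabla\phi_n\|_{L^2}^2 \sim n^2$ but $E(\phi_n)\to -\infty$, since $q > 2^{\#}_M$ forces the $L^q$-term $\sim n^{M(q-2)/2}$ to dominate; together with the local minimizer these give two points separated by the barrier $\{\|\nabla u\|_{L^2}^2 = \rho\}$ on which $E \ge \phi_c(\rho) > 0$, so the min--max level $\beta_c \ge \phi_c(\rho)$ is a candidate critical value strictly above the local minimum. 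The main obstacle is the Palais--Smale condition at level $\beta_c$: one must show a sequence with $E(u_n)\to\beta_c$ and $E'|_{\S_c}(u_n)\to 0$ is bounded in $H^1_0(D)$ and that the Lagrange multipliers $\lambda_n$ stay bounded, after which the compact embedding yields strong convergence and a positive critical point distinct from the local minimizer. This is exactly where $(\mathbf{A_3})$ enters, supplying the quantitative lower control on $f$ (hence the super-quadraticity of $F$) needed to combine the energy and derivative identities and to sign-control $\lambda_n$. I stress that the classical device of producing bounded Palais--Smale sequences via the dilation $u\mapsto u(\cdot/t)$ à la Jeanjean is unavailable on the bounded domain $D$ -- precisely the source of the gap filled in \cite{Pierotti2025} -- so the boundedness must instead be extracted from $(\mathbf{A_3})$ and the structure of the constrained problem. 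I expect this compactness step to be the true heart of the proof.
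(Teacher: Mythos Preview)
Your treatment of $(i)$, $(ii)$, and the local minimizer in $(iii)$ is correct and essentially the paper's argument: Gagliardo--Nirenberg gives the master lower bound, coercivity or the potential-well structure follows according to the sign of $q\gamma-2$, and the compact embedding $H^1_0(D)\hookrightarrow L^q(D)$ closes the minimization. (The paper additionally routes the minimizing sequence through Ekeland's principle and a Palais--Smale compactness lemma, but your direct lower-semicontinuity argument is equivalent here.)

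The mountain-pass part, however, has a genuine gap. You correctly isolate the obstruction---boundedness of the constrained Palais--Smale sequence, with no dilation available---but your proposed cure does not work. Combining the energy level $E(u_n)\to\beta_c$ with the constrained derivative identity $\|\nabla u_n\|_{L^2}^2+\lambda_n c^2-\int_D|x|^\mu f(u_n)u_n = o(\|u_n\|)$ in the Ambrosetti--Rabinowitz fashion leaves $\lambda_n$ sitting on the right-hand side: if $\lambda_n\to+\infty$ nothing in this algebra prevents $\|\nabla u_n\|_{L^2}\to\infty$. A one-sided sign bound on $\lambda_n$ (which is easy, by testing against the first Dirichlet eigenfunction) does not help with the upper bound. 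And $(\mathbf{A_3})$ is a lower bound on $f'$, not an AR-type relation between $f$ and $F$; it does not by itself produce the inequality you would need.

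The paper's route is structurally different. It runs the monotonicity trick of Borthwick--Chang--Jeanjean--Soave on the family $E_\tau(u)=\tfrac12\|\nabla u\|_{L^2}^2-\tau\int_D|x|^\mu F(u)$, $\tau\in[\tfrac12,1]$, obtaining for almost every $\tau$ a \emph{bounded} Palais--Smale sequence with approximate Morse index at most~$1$, hence a constrained critical point $v_\tau$ with free Morse index $m(v_\tau)\le 2$. One then takes $\tau_n\to 1^-$ and must show $\{v_{\tau_n}\}$ is bounded; this is done by a blow-up analysis: assuming $\lambda_n\to+\infty$, the rescalings $V_n(y)=\lambda_n^{-1/(q-2)}v_n(\lambda_n^{-1/2}y+P_n)$ converge to the ground state of $-\Delta V+|P|^\mu V=a_0|P|^\mu V^{q-1}$ on $\mathbb{R}^M$, the Morse-index bound limits the number of concentration points to at most two, and an exponential decay estimate then contradicts the fixed mass $c^2$. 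Here is where $(\mathbf{A_3})$ actually enters: the inequality $f'(s)\ge a_1 s^{q-2}$ with $a_1>a_0(q-1)$ is exactly what makes the second-variation quadratic form of $v_n$ dominate, after rescaling, the second variation of the limit equation, so that the Morse-index bound $m(v_n)\le 2$ survives the blow-up. Without this Morse-index transfer the whole boundedness argument collapses. Your proposal does not contain---and its suggested use of $(\mathbf{A_3})$ cannot substitute for---either the monotonicity trick or the blow-up/Morse-index machinery.
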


In particular, if we consider the case $M = 3,5,9$, by reversing the reduction process, we can obtain the existence of positive solutions for the supercritical problem \eqref{eq:OP} in dimension $4, 8$ and $16$.

\begin{theorem}\label{cor1.3}
    Assume $\Omega \subset \mathbb{R}^{N}$ is an annulus and $N = 4,8,16$. Denote the reduced Sobolev critical exponent by
    \begin{equation*}
        \tilde{2}^* =
        \begin{cases}
            6 & \text{ if } N = 4, \\
            \frac{10}{3} & \text{ if } N = 8, \\
            \frac{18}{7} & \text{ if } N = 16,
        \end{cases}
    \end{equation*}
    and the reduced mass critical exponent by
    \begin{equation*}
        \tilde{2}^\# =
        \begin{cases}
            \frac{10}{3} & \text{ if } N = 4, \\
            \frac{14}{5} & \text{ if } N = 8, \\
            \frac{22}{9} & \text{ if } N = 16.
        \end{cases}
    \end{equation*}
    Then the following conclusions hold:
    \begin{enumerate}
        \item[$(i)$] If $2 < p < \tilde{2}^\#$, then for every $c > 0$, \eqref{eq:OP} has a positive solution, which corresponds to a global minimizer of $E$ on ${\S}_c$;
        \item[$(ii)$] If $p = \tilde{2}^\#$, then there exists some $c_3 > 0$ small such that for every $0 < c < c_3$, \eqref{eq:OP} has a positive solution, which corresponds to a global minimizer of $E$ on ${\S}_c$;
        \item[$(iii)$] If $\tilde{2}^\# < p < \tilde{2}^*$, then there exists some $c_4>0$ such that if $0< c < c_4$, then \eqref{eq:OP} has two positive solutions, one corresponds to a local minimizer of $E$ on ${\S}_c$ and the other corresponds to a mountain pass level for $E$ on ${\S}_c$. 
    \end{enumerate}
\end{theorem}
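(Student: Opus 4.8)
The plan is to deduce Theorem \ref{cor1.3} from Theorem \ref{thm1.2} by running the Hopf reduction of Section 2 backwards. The whole argument rests on the correspondence $u \mapsto v$ between $S^1$- (respectively $S^3$-, $S^7$-) invariant functions on the annulus $\Omega \subset \R^N$ and functions on the lower-dimensional annulus $D \subset \R^M$, where $(N,M) \in \{(4,3),(8,5),(16,9)\}$. Concretely, writing $\R^N = \mathbb{K}^2$ with $\mathbb{K} \in \{\C,\mathbb{H},\mathbb{O}\}$ and letting $h \colon \R^N \to \R^M$ be the associated Hopf map (homogeneous of degree $2$, so that $|h(x)| = |x|^2/2$), an invariant $u$ factors as $u = v \circ h$ with $v$ defined on $D = h(\Omega) = \{\,a^2/2 < |y| < b^2/2\,\}$. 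First I would record, as established in Section 2, that this correspondence is, up to a fixed multiplicative constant, an isomorphism of the relevant Sobolev spaces under which the Dirichlet energy, the mass, and the nonlinear term transform into their weighted analogues on $D$; in particular the original problem \eqref{eq:OP} restricted to invariant functions becomes exactly \eqref{eq:RPP}, which is the instance of \eqref{eq:RGP} with $\mu = -1$ and $f(v) = \tfrac12 v^{p-1}$ (up to the normalising constant).

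The second step is bookkeeping of exponents, and it is what makes the thresholds come out as stated. Since the nonlinear power is untouched by $u = v \circ h$, the reduced exponent is $q = p$. Hence the reduced mass critical threshold $2^{\#}_M = 2 + \tfrac{4}{M}$ equals $\tfrac{10}{3},\tfrac{14}{5},\tfrac{22}{9}$ for $M = 3,5,9$, i.e.\ exactly $\tilde 2^\#$, while the reduced Sobolev exponent $\tfrac{2M}{M-2}$ equals $6,\tfrac{10}{3},\tfrac{18}{7}$, i.e.\ exactly $\tilde 2^*$. Thus the three regimes $2<q<2^{\#}_M$, $q=2^{\#}_M$, $2^{\#}_M<q<\tfrac{2M}{M-2}$ translate verbatim into $2<p<\tilde 2^\#$, $p=\tilde 2^\#$, $\tilde 2^\# < p < \tilde 2^*$. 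Because $D$ is an annulus bounded away from the origin, the weight $|y|^{-1}$ is bounded above and below on $D$, so it is a harmless coefficient and does not shift any of these exponents. I would also check that the two mass constraints correspond through a fixed monotone relation $c \leftrightarrow \hat c$, so that the smallness thresholds $c_1,c_2$ of Theorem \ref{thm1.2} produce the thresholds $c_3,c_4$ of Theorem \ref{cor1.3}.

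With this dictionary in place, I would verify the hypotheses of Theorem \ref{thm1.2} for $f(v) = \tfrac12 v^{p-1}$: ($\mathbf{A_1}$) and ($\mathbf{A_2}$) hold immediately (with $a_0 = \tfrac12$), so cases $(i)$ and $(ii)$ give the global minimizer and the first conclusion of case $(iii)$ gives the local minimizer. Applying Theorem \ref{thm1.2} to \eqref{eq:RPP} yields a positive $v$ on $D$, which I would pull back to a positive, invariant $u = v \circ h$ on $\Omega$. The final point is that $u$ is a genuine critical point of $E$ on $S_c$ and not merely a critical point among invariant competitors; this is supplied by the principle of symmetric criticality for the $S^1$- and $S^3$-actions when $N=4,8$, and by the corresponding reduction identity of Section 2 when $N=16$. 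Positivity and the boundary condition are preserved by the pullback, and the variational type (global minimizer / local minimizer / mountain pass) is transported because the reduction is, up to a constant, an isometry.

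The main obstacle is twofold. The more structural difficulty is making the reduction itself fully rigorous in each dimension — identifying the invariant Sobolev space, proving the energy–mass–nonlinearity transformation laws, and invoking symmetric criticality — and in particular handling $N = 16$, where $\mathbb{O}$ is non-associative, so that $S^7$ is not a group and the usual Palais principle must be replaced by a direct argument tailored to the octonionic Hopf map; this is precisely why the method is confined to $N = 4,8,16$. The more technical difficulty concerns the mountain-pass solution in case $(iii)$: the pure power $f(v) = \tfrac12 v^{p-1}$ satisfies ($\mathbf{A_3}$) only in the borderline sense $a_1 = a_0(q-1)$ rather than with the strict inequality required there, since $f'(s) = \tfrac{q-1}{2}s^{q-2}$. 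I would therefore either isolate this boundary case in the proof of Theorem \ref{thm1.2}$(iii)$ and check that the mountain-pass geometry and the compactness argument still go through for pure powers, or invoke the classical normalized mountain-pass construction for pure nonlinearities directly, in order to secure the second positive solution asserted in Theorem \ref{cor1.3}$(iii)$.
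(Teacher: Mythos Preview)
Your approach matches the paper's exactly: Theorem \ref{cor1.3} is presented as a direct corollary of Theorem \ref{thm1.2} via the Hopf reduction of Section 2, with no separate proof written out, so the dictionary of exponents and the pull-back $u=v\circ h$ you describe are precisely what is intended. Your worry about ($\mathbf{A_3}$) for the pure power is legitimate as the hypothesis is literally stated, but if you trace its single use (the Morse-index transfer in Lemma \ref{lem4.8}) you will see the argument only needs $a_1 \geq a_0(q-1)$; for $f(s)=Cs^{q-1}$ one has $f'(s)=C(q-1)s^{q-2}$ identically, the splitting into $I_{1,n}$ and $I_{2,n}$ is unnecessary, and the limit $\int_D |x|^\mu f'(v_n)\phi_{i,n}^2\,\dif x \to a_0(q-1)\int_H |P|^\mu \tilde V^{q-2}\phi_i^2\,\dif y$ holds exactly, so the mountain-pass conclusion goes through without any extra work.
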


\begin{remark}
    In other dimensions $N \neq 4,8,16$, the reduction method based on the Hopf fibration is still valid. 
    However, the reduced problem will be defined on a warped product manifold rather than an annulus in the Euclidean space. 
    This will bring some additional difficulties in studying the existence of solutions to the reduced problem. 
    We will discuss this issue in the final section.
\end{remark}

\begin{remark}
    Notice that the original Sobolev critical exponent $2^*$ in these cases are given by
    \begin{equation*}
        2^* = 
        \begin{cases}
            4 & \text{ if } N = 4, \\
            6 & \text{ if } N = 8, \\
            \frac{10}{3} & \text{ if } N = 16.
        \end{cases}
    \end{equation*}
    So, the reduced Sobolev critical exponent $\tilde{2}^*$ is larger than the original Sobolev critical exponent $2^*$. 
    Thus, the results in Theorem \ref{cor1.3} are indeed new results for some Sobolev supercritical nonlinearities.
\end{remark}

\subsection{Plan of the paper}

In Section 2, we will first introduce the reduction method based on Hopf fibration in detail. From the reduction method, we can give a full description of the reduced problem \eqref{eq:RPP}. 
Then we will introduce some useful results, such as the Gagliardo-Nirenberg inequality and some properties of the limiting equation. 

In Section 3 the case $2 < q \leq 2^{\#}_M$ will be discussed. We will use the Gagliardo-Nirenberg inequality to show that the energy functional $E$ is bounded from below on ${\S}_c$ and coercive, so the existence of a global minimizer can be obtained by the Ekeland variational principle and a compactness lemma.

In Section 4 the case $2^{\#}_M < q < \frac{2M}{M-2}$ will be discussed. We will use the Gagliardo-Nirenberg inequality again to show that $E$ has a local minimizer on ${\S}_c$ for $c > 0$ small. With the local minimizer, we can construct a mountain pass geometry for $E$ on ${\S}_c$. To find a mountain pass type solution, we will use the monotonicity trick and a blow-up analysis to show the boundedness of a special Palais-Smale sequence. Therefore, a mountain pass type solution will be obtained by a compactness lemma.

Finally, in Section 5 we give some comments and discuss some open problems based on this work, which will be discussed in the future. 

An appendix is also given to show the full details of the Hopf reduction process.

In this paper, we will use the following notations:
\begin{itemize}
    \item $C, C_i$ denote positive constants that may vary from line to line;
    \item $B_R(x_0)$ denotes the open ball in $\mathbb{R}^M$ centered at $x_0$ with radius $R > 0$;
    \item $H^1_0(D)$ denotes the usual Sobolev space with the norm $\|u\|_{H^1_0} = (\int_D |\nabla u|^2 \, \dif x)^{1/2}$, the dual space of $H^1_0(D)$ is denoted by $H^{-1}(D)$;
    \item $L^p(D)$ denotes the usual Lebesgue space with the norm $\|u\|_{L^p} = (\int_D |u|^p \, \dif x)^{1/p}$ for $1 \leq p < +\infty$;
    \item $\sim$ means that there exist some positive constants such that the two sides are bounded by each other up to these constants, i.e., $A \sim B$ means that there exist $C_1, C_2 > 0$ such that $C_1 A \leq B \leq C_2 A$;
    \item $o_n(1)$ denotes a quantity depending on $n$ such that $\lim\limits_{n \rightarrow +\infty} o_n(1) = 0$.
\end{itemize}
\section{Preliminaries}

\subsection{The Hopf reduction method}

We consider the following problem
\begin{equation}\label{eq2.1}
    \begin{cases}
        -\Delta u+ \lambda |x|^\alpha u=|x|^\alpha f(u) & \text { in } A, \\ 
        u=0 & \text { on } \partial A, \\
        \int_A |x|^\alpha u^2 \, \dif x = c^2,
    \end{cases}
\end{equation}
where $c > 0$, $\alpha \in \mathbb{R}$, and $A=\left\{x \in \mathbb{R}^{N}: a<|x|<b, 0<a<b<+ \infty\right\}$ is an annulus in $\mathbb{R}^{N}$ with $N = 4, 8, 16$. 
We employ a transformation based on the Hopf fibration, a method originally introduced by Ruf and Srikanth in \cite{Ruf2010} for $N = 4$ and later generalized in \cite{Ruf2014} for $N = 8, 16$.

We regard the annulus as a product space $(a, b) \times S^{N-1}$. The Hopf fibration is illustrated by the following diagrams:
\begin{equation*}
    \begin{tikzcd}[column sep=large, row sep=large]
    S^{1} \arrow[r, "g"] & S^{3} \arrow[d, "\pi"] \\
    & \mathbb{CP}^{\,1}
    \end{tikzcd}
    \qquad
    \begin{tikzcd}[column sep=large, row sep=large]
    S^{3} \arrow[r, "g"] & S^{7} \arrow[d, "\pi"] \\
    & \mathbb{HP}^{\,1}
    \end{tikzcd}
    \qquad
    \begin{tikzcd}[column sep=large, row sep=large]
    S^{7} \arrow[r, "g"] & S^{15} \arrow[d, "\pi"] \\
    & \mathbb{OP}^{\,1}
    \end{tikzcd}
\end{equation*}
where $\mathbb{CP}^{\,1} \cong S^3 / S^1$, $\mathbb{HP}^{\,1} \cong S^7 / S^3$, and $\mathbb{OP}^{\,1} \cong S^{15} / S^7$ denote the complex, quaternionic, and octonionic projective lines, respectively. Here, $g$ represents the group action defined as follows:
\begin{itemize}
    \item for $N = 4$, the action of $S^1$ on $S^3$ is defined by the multiplication of unit complex numbers;
    \item for $N = 8$, the action of $S^3$ on $S^7$ is defined by the multiplication of unit quaternions;
    \item for $N = 16$, the action of $S^7$ on $S^{15}$ is defined by the left-multiplication of unit octonions.
\end{itemize}
The map $\pi$ is the corresponding quotient map (refer to \cite{Petersen2016} for further details). 

Let $(r, \theta)$ denote the spherical coordinates in $\mathbb{R}^{N}$, where $r = |x|$ and $\theta \in S^{N-1}$. 
These group actions are free, meaning $g \theta \neq \theta$ for any $\theta \in S^{N-1}$ and any $g \neq e$ in the respective group. 
We focus on functions in $H_0^1(A)$ that are invariant under these group actions, defined as
\begin{equation*}
    H_{0,G}^1(A) = \{u \in H_0^1(A): u(r, g \theta) = u(r, \theta),~\forall g \in G\},
\end{equation*}
where $G$ is $S^1, S^3$, or $S^7$ for $N = 4, 8, 16$ respectively. 
Each $u \in H_{0,G}^1(A)$ can be identified with a function defined in $H^1_0((a, b) \times \mathbb{FP}^{\,1})$, where $\mathbb{FP}^{\,1}$ represents $\mathbb{CP}^{\,1}, \mathbb{HP}^{\,1}$, or $\mathbb{OP}^{\,1}$. 
Specifically, there exists a function $v$ defined on $(a, b) \times \mathbb{FP}^{\,1}$ such that
\begin{equation*}
    u(r, \theta) = v(r, \pi(\theta)).
\end{equation*}
This identification allows us to reduce problem \eqref{eq2.1} to a lower-dimensional problem.
Setting $M = 3, 5, 9$ for $N = 4, 8, 16$ respectively, the reduced problem is defined on an annulus $D = \{x \in \mathbb{R}^{M}: a^2 / 2 < |x| < b^2 / 2\}$ in $\mathbb{R}^{M}$
\begin{equation}\label{eq:2.2}
    \begin{cases}
        -\Delta v + \lambda (2 |x|)^{\frac{\alpha}{2}-1} v= (2 |x|)^{\frac{\alpha}{2}-1} f(v) & \text { in } D, \\
        v=0 & \text { on } \partial D, \\
        \int_D (2 |x|)^{\frac{\alpha}{2}-1} v^2 \, \dif x = \hat{c}^2,
    \end{cases}
\end{equation}
where $\hat{c} = c \omega_{N-M}^{-\frac{1}{2}}$. Here $\omega_{N-M}$ denotes the surface area of the unit sphere $S^{N-M}$ in $\mathbb{R}^{N-M}$. The details of this reduction process are provided in the Appendix. Note that this problem can be rewritten in the form of \eqref{eq:RGP} with $\mu = \frac{\alpha}{2} - 1$.

\begin{remark}
    For the case $N = 4$, the reduction process can be performed explicitly.
    Consider the following coordinate system for the annulus $A=\{x \in \mathbb{R}^4: a<|x|<b\}$
    \begin{equation*}
        \begin{array}{ll}
            x_1=r \sin \theta_1 \sin \theta_3, \\
            x_2=r \cos \theta_1 \sin \theta_3, \\
            x_3=r \sin \theta_2 \cos \theta_3, \\
            x_4=r \cos \theta_2 \cos \theta_3, 
        \end{array}
    \end{equation*}
    where $a<r<b$, $\theta_1, \theta_2 \in [0, 2\pi)$ represent the polar angles in the $x_1 x_2$-plane and the $x_3 x_4$-plane respectively, and $\theta_3 \in [0, \pi/2)$ represents the angle between the planes $x_1 x_2$ and $x_3 x_4$. If we consider functions in $H_0^1(A)$ invariant under the group action
    \begin{equation*}
        T_\tau\left(r, \theta_1, \theta_2, \theta_3\right)=\left(r, \theta_1+\tau, \theta_2+\tau, \theta_3\right), \quad \tau \in[0,2 \pi),
    \end{equation*}
    then problem \eqref{eq2.1} reduces to the following problem in $D \subset \mathbb{R}^{3}$:
    \begin{equation*}
        \begin{cases}
            -\Delta v+ \lambda |x|^{\frac{\alpha}{2}-1} v=|x|^{\frac{\alpha}{2}-1} f(v) & \text { in } D, \\ 
            v=0 & \text { on } \partial D, \\
            \int_D |x|^{\frac{\alpha}{2}-1} v^2 \, \dif x = \hat{c}^2,
        \end{cases}
    \end{equation*}
    where $D=\left\{x \in \mathbb{R}^3: a^2 / 2<|x|<b^2 / 2\right\}$ and $\hat{c} = (\frac{c^2}{2 \pi} \cdot 2^{-\frac{\alpha}{2}})^{\frac{1}{2}} > 0$. This is a special case of \eqref{eq:2.2} with $M = 3$.
\end{remark}

\subsection{Gagliardo-Nirenberg inequality}
The Gagliardo-Nirenberg inequality \cite{Nirenberg1959} plays a crucial role in the discussion of prescribed mass problems. Specifically, for any $\Omega \subset \mathbb{R}^M$ and any $u \in H^1_0(\Omega)$,
\begin{equation}\label{eq2.5}
    \|u\|_{L^{p}(\Omega)}^{p} \leq C_{M, p}\|\nabla u\|_{L^2(\Omega)}^{\gamma_p p}\|u\|_{L^2(\Omega)}^{(1-\gamma_p)p},
\end{equation}
where $\gamma_p = \frac{M(p-2)}{2p} > 0$. Equality holds if and only if $\Omega = \mathbb{R}^M$ and $u$ is equal to $U_0$ up to translations and dilation, where $U_0$ is the unique positive solution of the problem
\begin{equation*}
    \begin{cases}
        -\Delta U_0 + U_0 = U_0^{p-1} & \text { in } \mathbb{R}^{M}, \\
        U_0>0 & \text { in } \mathbb{R}^{M}.
    \end{cases}
\end{equation*}
Moreover, $U_0(x)=U_0(|x|)$ has Morse index $1$ and satisfies the exponential decay estimate:
\begin{equation*}
    \left|D^\alpha U_0(x)\right| \leq C_1 \exp (-\sigma|x|), \quad x \in \mathbb{R}^{M},
\end{equation*}
for some $C_1, \sigma>0$ and all $|\alpha| \leq 2$.
Furthermore, for any $\lambda_0, \mu_0 > 0$, the unique positive solution of the problem
\begin{equation*}
    \begin{cases}
        -\Delta U+ \lambda_0 U = \mu_0 U^{p-1} & \text { in } \mathbb{R}^{M}, \\ 
        U>0 & \text { in } \mathbb{R}^{M},
    \end{cases}
\end{equation*}
is explicitly given by
\begin{equation*}
    U_{\lambda_0, \mu_0}(x)=\left(\frac{\mu_0}{\lambda_0}\right)^{\frac{1}{p-2}} U_0\left(\sqrt{\lambda_0} x\right).
\end{equation*}

\section{A global minimizer}

From now on, we will study the existence of positive solutions for the problem \eqref{eq:RGP}.
We first show that the energy functional $E$ is coercive on ${\S}_c$ for any $c > 0$ if $2 < q < 2^\sharp_M$ and for $c > 0$ small enough if $q = 2^\sharp_M$.
\begin{lemma}\label{prop3.1}
    Assume that the nonlinearity $f$ satisfies the assumptions ($\mathbf{A_1}$) and ($\mathbf{A_2}$), then $E(v)$ is coercive on ${\S}_c$ if one of the following conditions holds:
    \begin{enumerate}
        \item[(i)] $2 < q < 2^{\#}_M$ for any $c > 0$;
        \item[(ii)] $q = 2^{\#}_M$ for $c > 0$ sufficiently small.
    \end{enumerate}
    As a consequence, in both cases we have $\inf_{{\S}_c} E(v) > -\infty$.
\end{lemma}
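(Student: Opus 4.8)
The plan is to bound $E(v)$ from below by an explicit function of $t := \|\nabla v\|_{L^2(D)} = \|v\|_{H^1_0(D)}$ that diverges to $+\infty$ as $t \to +\infty$. First I would extract a pointwise growth bound on the primitive $F$. Assumption ($\mathbf{A_1}$) yields $f(s) \leq \e s$ for $s$ near $0$, while ($\mathbf{A_2}$) yields $f(s) \leq C s^{q-1}$ for $s$ large; since $f$ is continuous on $[0,+\infty)$, combining these gives, for every $\e > 0$, a constant $C_\e > 0$ with
\begin{equation*}
    F(s) \leq \e s^2 + C_\e s^q, \qquad s \geq 0.
\end{equation*}

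Next I would exploit that on the annulus $D = \{a < |x| < b\}$ with $0 < a < b < +\infty$ the weight obeys $|x|^\mu \sim 1$, so that $\int_D |x|^\mu v^2 \,\dif x \sim \|v\|_{L^2(D)}^2$ and $\int_D |x|^\mu v^q \,\dif x \leq C \|v\|_{L^q(D)}^q$; in particular every $v \in \S_c$ satisfies $\|v\|_{L^2(D)}^2 \leq C c^2$. Inserting the bound on $F$ and then applying the Gagliardo-Nirenberg inequality \eqref{eq2.5} with exponent $q$ gives
\begin{equation*}
    \int_D |x|^\mu F(v) \,\dif x \leq \e c^2 + C' \|\nabla v\|_{L^2}^{\gamma_q q}\|v\|_{L^2}^{(1-\gamma_q)q} \leq \e c^2 + C'' c^{(1-\gamma_q)q}\, t^{\gamma_q q},
\end{equation*}
where $\gamma_q = \tfrac{M(q-2)}{2q}$, so the decisive exponent is $\gamma_q q = \tfrac{M(q-2)}{2}$. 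Hence
\begin{equation*}
    E(v) \geq \tfrac{1}{2} t^2 - C'' c^{(1-\gamma_q)q}\, t^{\gamma_q q} - \e c^2.
\end{equation*}

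The conclusion then rests on comparing $\gamma_q q$ with $2$: a direct computation shows $\gamma_q q < 2 \iff q < 2^{\#}_M$ and $\gamma_q q = 2 \iff q = 2^{\#}_M$, while $\gamma_q < 1$ throughout by ($\mathbf{A_2}$), so that $(1-\gamma_q)q > 0$. In case (i) the quadratic term dominates the lower-order correction, giving coercivity for every $c > 0$. In case (ii) the correction is exactly quadratic, $E(v) \geq (\tfrac12 - C'' c^{(1-\gamma_q)q}) t^2 - \e c^2$, so coercivity holds precisely when the coefficient is positive, i.e.\ when $c$ is small enough that $C'' c^{(1-\gamma_q)q} < \tfrac12$; this critical balance is the main obstacle and the source of the smallness condition on $c$. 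In either case the right-hand side is a continuous function of $t \in [0,+\infty)$ tending to $+\infty$, hence bounded below, which yields $\inf_{\S_c} E > -\infty$.
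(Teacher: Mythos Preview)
Your proposal is correct and follows essentially the same route as the paper: bound $F$ via ($\mathbf{A_1}$)--($\mathbf{A_2}$), use that $|x|^\mu$ is bounded above and below on the annulus to pass between weighted and unweighted norms, apply Gagliardo--Nirenberg, and compare the exponent $\gamma_q q$ with $2$. The paper writes out the case $\mu\geq 0$ explicitly (bounding $|x|^\mu$ by $b^\mu$ above and $a^\mu$ below) rather than invoking the equivalence $|x|^\mu\sim 1$, but this is only a cosmetic difference.
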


\begin{proof}
    By ($\mathbf{A_1}$) and ($\mathbf{A_2}$), for any $\varepsilon > 0$, there exists some $C_\varepsilon > 0$ such that
    \begin{equation*}
        F(s) \leq \frac{\varepsilon}{2} |s|^2 + \frac{C_\varepsilon}{q} |s|^q, \quad s \in \mathbb{R}.
    \end{equation*}
    We will only give a proof the lemma for $\mu \geq 0$ and the proof for $\mu < 0$ is similar.
    Using the Gagliardo-Nirenberg inequality \eqref{eq2.5}, we have
    \begin{equation}\label{eq3.3}
        \begin{aligned}
            E(v) & =\frac{1}{2}\|\nabla v\|_{L^2(D)}^2-\int_D |x|^\mu F(v) \dif x \\
            & \geq \frac{1}{2}\|\nabla v\|_{L^2(D)}^2-\int_D |x|^\mu \left(\frac{\varepsilon}{2} |v|^2 + \frac{C_\varepsilon}{q} |v|^q \right) \dif x \\
            & \geq \frac{1}{2}\|\nabla v\|_{L^2(D)}^2 - \frac{\varepsilon}{2} c^2 - \frac{C_\varepsilon}{q} b^\mu \|v\|_{L^q(D)}^q \\
            & \geq \frac{1}{2}\|\nabla v\|_{L^2(D)}^2 - \frac{\varepsilon}{2} c^2 - \frac{C_\varepsilon}{q} b^\mu C_{M, q} \|\nabla v\|_{L^2(D)}^{\gamma_q q} \|v\|_{L^2(D)}^{(1-\gamma_q) q} \\
            & \geq \frac{1}{2}\|\nabla v\|_{L^2(D)}^2 - \frac{\varepsilon}{2} c^2 - \frac{C_\varepsilon}{q} b^\mu C_{M, q} \left(\frac{c^2}{a^\mu}\right)^{(1-\gamma_q) \frac{q}{2}} \|\nabla v\|_{L^2(D)}^{\gamma_q q} \\
            & \geq \frac{1}{2}\|\nabla v\|_{L^2(D)}^2 - C_1\|\nabla v\|_{L^2(D)}^{\gamma_q q} - C_2 \\
            & \geq \left(\frac{1}{2} \|\nabla v\|_{L^2(D)}^{2-\gamma_q q} - C_1\right) \|\nabla v\|_{L^2(D)}^{\gamma_q q} - C_2,
        \end{aligned}
    \end{equation}
    where $C_1$ depends on the parameters $M, q, a, b, c, \varepsilon, \mu$ and $C_2$ depends on $c, \varepsilon$. 
    
    Proof of (1): Since $2 < q < 2^\sharp_M$, $0 < \gamma_q q < 2$. So as $\|\nabla v\|_{L^2(D)} \rightarrow +\infty$, we have $\frac{1}{2} \|\nabla v\|_{L^2(D)}^{2-\gamma_q q} - C_1 \rightarrow +\infty$, so $E(v)$ is coercive in ${\S}_c$. 

    Proof of (2): Since $q = 2^\sharp_M$, $\gamma_q q = 2$. By \eqref{eq3.3}, if the mass $c > 0$ is sufficiently small, then the constant $C_1$ is also sufficiently small such that $\frac{1}{2} - C_1 > 0$. So in this case $E(v)$ is also coercive in ${\S}_c$.
\end{proof}

\begin{remark}
    From the proof of Lemma \ref{prop3.1}, if we fix the inner radius $a > 0$ and let the outer radius $b \rightarrow +\infty$, then the constant $C_1$ will tend to infinity when $\mu > 0$ or tend to $0$ when $\mu < 0$. 
    Therefore, this method cannot be used to show that $E(v)$ is bounded from below on ${\S}_c$ when $D$ is an exterior domain and $\mu > 0$.
    We will discuss the exterior problem in the final section.
\end{remark}

To prove the existence of a global minimizer, we also need the following compactness lemma. First, we give the definition of a Palais-Smale sequence for $E|_{{\S}_c}$.
\begin{definition}
    A sequence $\left\{v_n\right\} \subset {\S}_c$ is called a Palais-Smale sequence of $E|_{{\S}_c}$ if
    \begin{equation*}
        E\left(v_n\right) \text { is bounded and }  \left\|E'|_{{\S}_c}\left(v_n\right)\right\|_{H^{-1}} \rightarrow 0 \quad \text { as } n \rightarrow +\infty.
    \end{equation*}
\end{definition}

\begin{lemma}\label{lem4.2}
    Let $\left\{v_n\right\} \subset {\S}_c$ be a bounded Palais-Smale sequence of $E|_{\mathcal{S}_c}$. 
    Then there exists $v \in {\S}_c$ such that $v_n \rightarrow v$ strongly in $H_0^1(D)$ up to a subsequence.
\end{lemma}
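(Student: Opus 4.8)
The plan is to exploit the two structural features that force compactness: the domain $D$ is bounded, so $a \le |x| \le b$ and the weight $|x|^\mu$ is bounded above and below by positive constants; and the exponent $q$ in ($\mathbf{A_2}$) satisfies $q < \frac{2M}{M-2}$, so the embedding $H^1_0(D) \hookrightarrow L^s(D)$ is compact for every $s \in [2, \frac{2M}{M-2})$. This compact embedding is the engine of the whole argument, and it is exactly what would fail on an unbounded domain or at the Sobolev critical exponent.

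First I would pass to a subsequence along which $v_n \rightharpoonup v$ weakly in $H^1_0(D)$, $v_n \to v$ strongly in $L^s(D)$ for all $s \in [2, \frac{2M}{M-2})$, and $v_n \to v$ almost everywhere. Since $|x|^\mu$ is bounded, strong $L^2$ convergence gives $\int_D |x|^\mu v_n^2 \, \dif x \to \int_D |x|^\mu v^2 \, \dif x$; as the left-hand side equals $c^2$ for every $n$, this yields $v \in \S_c$ (in particular $v \neq 0$ since $c > 0$).

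Next I would rewrite the Palais--Smale condition in Lagrange-multiplier form. Because $v_n \in \S_c$ with $c > 0$, the constraint functional $u \mapsto \int_D |x|^\mu u^2 \, \dif x$ has non-vanishing differential at $v_n$, so the projection identity for constrained gradients produces numbers $\lambda_n \in \R$ with
\[
    -\Delta v_n + \lambda_n |x|^\mu v_n - |x|^\mu f(v_n) = o_n(1) \quad \text{in } H^{-1}(D).
\]
Testing this against $v_n$ and using $\int_D |x|^\mu v_n^2 \, \dif x = c^2$ together with the growth bound $f(s) \le C(|s| + |s|^{q-1})$ coming from ($\mathbf{A_1}$) and ($\mathbf{A_2}$), I would bound $\int_D |x|^\mu f(v_n) v_n \, \dif x$ via the continuous embedding into $L^q$ and conclude that $\{\lambda_n\}$ is bounded; hence $\lambda_n \to \lambda$ along a further subsequence.

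Finally I would test the same identity against $v_n - v$, which is bounded in $H^1_0(D)$, so the right-hand side contributes $o_n(1)$. The multiplier term tends to zero since $\int_D |x|^\mu v_n (v_n - v) \, \dif x \to 0$ by strong $L^2$ convergence, and the nonlinear term tends to zero because $f(v_n)$ is bounded in $L^{q'}$ while $v_n - v \to 0$ in $L^q$ (H\"older, with $q' = q/(q-1)$). Weak convergence gives $\int_D \nabla v \cdot \nabla(v_n - v)\, \dif x \to 0$, so the gradient term equals $\|\nabla(v_n - v)\|_{L^2(D)}^2 + o_n(1)$. Combining these forces $\|\nabla(v_n - v)\|_{L^2(D)} \to 0$, i.e. $v_n \to v$ strongly in $H^1_0(D)$, which completes the proof. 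The only step requiring genuine care --- the main obstacle --- is the passage to the Lagrange multipliers and the verification that $\{\lambda_n\}$ is bounded; once the compact embedding is in hand, every other estimate is routine.
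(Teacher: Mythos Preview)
Your proposal is correct and follows essentially the same route as the paper: pass to a weak limit via the compact embedding, extract Lagrange multipliers $\lambda_n$, bound them by testing against $v_n$, then test against $v_n-v$ and use strong $L^s$ convergence to kill the weighted and nonlinear terms, leaving $\|\nabla(v_n-v)\|_{L^2}^2\to 0$. Your write-up is in fact slightly more explicit than the paper's (you justify $v\in\S_c$ from the boundedness of $|x|^\mu$ on $D$ and spell out the H\"older estimate on the nonlinear term), but the argument is the same.
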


\begin{proof}
    Since the sequence $\left\{v_n\right\} \subset {\S}_c$ is bounded in $H_0^1(D)$, we may deduce that up to a subsequence there is $v \in {\S}_c$ such that
    \begin{equation*}
        \begin{aligned}
         v_n \rightharpoonup v \quad & \text { in } H_0^1(D), \\
         v_n \rightarrow v \quad & \text { in } L^p(D), \quad \forall p \in \left[2,2^*\right), \\
         v_n(x) \rightarrow v(x) \quad & \text { for a.e. } x \in D .
        \end{aligned}
    \end{equation*}
    By the definition of the Palais-Smale sequence, there exists $\lambda_n \in \mathbb{R}$ such that
    \begin{equation}\label{eq4.12}
        \int_D \nabla v_n \nabla w \, \dif x+\lambda_n \int_D|x|^\mu v_n w \, \dif x- \int_D|x|^\mu f\left(v_n\right) w \, \dif x = o_n(1)
    \end{equation}
    for every $w \in H_0^1(D)$. Taking $w = v_n$ in \eqref{eq4.12} we have
    \begin{equation*}
        \lambda_n c^2= \int_D |x|^\mu f\left(v_n\right) v_n \dif x-\int_D\left|\nabla v_n\right|^2 \dif x + o_n(1).
    \end{equation*}
    Therefore the boundedness of $\left\{v_n\right\}$ in $H_0^1(D)$ implies the boundedness of $\lambda_n$, thus there exists $\bar{\lambda} \in \mathbb{R}$, up to subsequence, such that $\lambda_n \rightarrow \bar{\lambda}$. Replacing $w$ with $v_n-v$ in \eqref{eq4.12}, we have
    \begin{equation}\label{eq4.13}
        \int_D \nabla v_n \nabla\left(v_n-v\right) \dif x+\lambda_n \int_D |x|^\mu v_n\left(v_n-v\right) \dif x - \int_D|x|^\mu f\left(v_n\right)\left(v_n-v\right) \dif x=o_n(1).
    \end{equation}
    By the weak convergence of $v_n$ in $H_0^1(D)$, we have
    \begin{equation}\label{eq4.14}
        \int_D \nabla v \nabla (v_n-v)+\bar{\lambda} \int_D |x|^\mu v\left(v_n-v\right) \dif x = o_n(1)
    \end{equation}
    Combining \eqref{eq4.13}, \eqref{eq4.14} and the strong convergence of $v_n$ in $L^p(D)$ and the convergence of $\lambda_n$, we obtain
    \begin{equation*}
        \begin{aligned}
        o_n(1)  = &\int_D\left|\nabla(v_n-v)\right|^2 \dif x+\left(\lambda_n-\bar{\lambda}\right) \int_D|x|^\mu v\left(v_n-v\right) \dif x \\
        & + \lambda_n \int_D |x|^\mu \left(v_n-v\right)^2 \dif x -\tau \int_D |x|^\mu f\left(u_n\right)\left(v_n-v\right) \dif x \\
         = &\int_D \left|\nabla\left(v_n-v\right)\right|^2 \dif x+o_n(1).
        \end{aligned}
    \end{equation*}
    Thus, $v_n \rightarrow v$ strongly in $H_0^1(D)$.
\end{proof}

\begin{proof}[Proof of Theorem \ref{thm1.2} (i) and (ii)]
    (i) For any $c > 0$, by Proposition \ref{prop3.1}, we have $\inf\limits_{{\S}_c} E(v) > -\infty$. 
    Therefore, we can find a minimizing sequence $\{v_n\} \subset {\S}_c$ associated to $\inf_{{\S}_c} E(v)$. 
    From the coerciveness of $E(v)$ on ${\S}_c$, we can deduce that $\{v_n\}$ is bounded in $H_0^1(D)$.
     By condition ($\mathbf{A_1}$) we may substitute $\{v_n\}$ with $\{|v_n|\}$ so we can assume that $\{v_n\}$ is nonnegative. 
     Using the Ekeland variational principle (see \cite[Theorem 2.4]{Willem1996}), we can deduce that, by choosing another minimizing sequence if necessary, $\{v_n\}$ is a bounded nonnegative Palais-Smale sequence in $H_0^1(D)$. 
     So using Lemma \ref{lem4.2}, we can find a global minimizer $v_c \geq 0$ that attains the minimum: $E(v_c) = \inf\limits_{{\S}_c} E(v)$. 
     By the strong maximum principle, $v_c > 0$ in $D$. 

    (ii) The proof is similar to (i) if we choose $c_1 > 0$ sufficiently small such that $E(v)$ is coercive on ${\S}_c$ for any $0 < c < c_1$.
\end{proof}

\section{A local minimizer and a mountain pass solution}

\subsection{Existence of a local minimizer}
We now assume that $2^\sharp_M < q < \frac{2M}{M-2}$. 
In this case, the functional $E(v)$ is unbounded from below on ${\S}_c$ so we cannot hope to find a global minimizer. 
However, it is not hard to find a local minimizer when the mass $c$ is sufficiently small.

For future convenience, we introduce the following auxiliary problem
\begin{equation}\label{eq:AP}
    \begin{cases}
        -\Delta v+ \lambda |x|^\mu v= \tau |x|^\mu f(v) & \text { in } D, \\ 
        v=0 & \text { on } \partial D, \\
        \int_D |x|^\mu v^2 \, \dif x = c^2,
    \end{cases}
\end{equation}
where $\tau \in [\frac{1}{2},1]$. Solutions to this problem can be found as critical points of the energy functional
\begin{equation*}
    E_\tau(v) = \frac{1}{2} \int_D |\nabla v|^2 \, \dif x - \tau \int_D |x|^\mu F(v) \, \dif x
\end{equation*}
on ${\S}_c$. In particular, when $\tau =1$ we are back to the problem \eqref{eq:RGP}.

In order to investigate the geometric structure of $E_\tau$, we define the sets 
\begin{equation*}
    \mathcal{B}_\rho=\left\{u \in {\S}_c: \int_D |\nabla u|^2 \dif x<\rho \right\}, \quad \mathcal{U}_\rho=\left\{u \in {\S}_c: \int_D |\nabla u|^2 \dif x=\rho \right\} .
\end{equation*}

Suppose that $\varphi_1 \in \S_c$ is the first eigenfunction that satisfies the problem
\begin{equation*}
    \begin{cases}
        -\Delta \varphi_1=\lambda_1 \varphi_1 & \text { in } D,\\ 
        \varphi_1=0 & \text { on } \partial D.
    \end{cases}
\end{equation*}
where $\lambda_1$ is the first eigenvalue of $-\Delta$ in $H_0^1(D)$.
By a direct calculation, $\varphi_1 \in \mathcal{B}_\rho$ if 
\begin{equation*}
    \begin{array}{ll}
        \rho \geq \lambda_1 b^{-\mu} c^2 & \text{ when } \mu >0, \\
        \rho \geq \lambda_1 c^2 & \text{ when } \mu =0, \\
        \rho \geq \lambda_1 a^{-\mu} c^2 & \text{ when } \mu <0,
    \end{array}
\end{equation*}  
in which case $\mathcal{B}_\rho$ is nonempty.
Now, we will show that the functional $E_\tau$ has a local minimizer when the mass $c$ is sufficiently small.
\begin{lemma}\label{prop4.1}
    If $2^\sharp_M < q < \frac{2M}{M-2}$, then there is a $c_2 > 0$ such that when $0 < c < c_2$, for some $\rho^* > 0$ we have
    \begin{equation}\label{eq4.2}
        m_\tau := \inf _{\mathcal{B}_{\rho^*}} E_\tau(v)< \inf _{\mathcal{U}_{\rho^*}} E_\tau(v).
    \end{equation}
\end{lemma}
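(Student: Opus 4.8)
The plan is to reduce the estimate \eqref{eq4.2} to a one–dimensional analysis of the energy along the variable $t = \|\nabla v\|_{L^2(D)}^2$, exploiting that $q > 2^\sharp_M$ makes the nonlinear term superlinear in $t$. First I would bound $E_\tau$ from below on $\S_c$ exactly as in the proof of Lemma \ref{prop3.1}: starting from $F(s) \leq \frac{\varepsilon}{2}|s|^2 + \frac{C_\varepsilon}{q}|s|^q$ and applying the Gagliardo--Nirenberg inequality \eqref{eq2.5}, together with the constraint $\int_D |x|^\mu v^2 \, \dif x = c^2$ (which controls $\|v\|_{L^2(D)}^2 \leq C c^2$), one obtains, uniformly for $\tau \in [\tfrac12, 1]$,
\[
    E_\tau(v) \geq \frac{1}{2}\|\nabla v\|_{L^2(D)}^2 - \frac{\varepsilon}{2}c^2 - C_3\, c^{(1-\gamma_q)q}\,\|\nabla v\|_{L^2(D)}^{\gamma_q q},
\]
with $C_3$ independent of $c$ and $\tau$. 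Writing $\beta := \gamma_q q/2$, the hypothesis $2^\sharp_M < q$ gives $\gamma_q q > 2$, hence $\beta > 1$, so the right–hand side is a function $g(t) = \tfrac12 t - \tfrac{\varepsilon}{2}c^2 - C_3 c^{(1-\gamma_q)q} t^{\beta}$ of $t = \|\nabla v\|_{L^2(D)}^2$ that is superlinear for large $t$ and behaves like $\tfrac12 t$ near the origin.

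Next I would fix any $\rho^* > 0$ and examine the two sides of \eqref{eq4.2} separately. On the sphere $\mathcal{U}_{\rho^*}$ every $v$ has $\|\nabla v\|_{L^2(D)}^2 = \rho^*$, so $\inf_{\mathcal{U}_{\rho^*}} E_\tau \geq g(\rho^*) = \tfrac12\rho^* - \tfrac{\varepsilon}{2}c^2 - C_3 c^{(1-\gamma_q)q}(\rho^*)^\beta$; since $(1-\gamma_q)q > 0$ for $q < \tfrac{2M}{M-2}$, this tends to $\tfrac12\rho^* > 0$ as $c \to 0^+$ and in particular exceeds $\tfrac14\rho^*$ once $c$ is small. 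For the ball, I use the scaled first eigenfunction as a test function: normalizing $\varphi_1$ so that $\varphi_1 \in \S_c$, the computation preceding the statement shows $\|\nabla\varphi_1\|_{L^2(D)}^2 \leq K c^2$ for a fixed constant $K$ independent of $c$, so $\varphi_1 \in \mathcal{B}_{\rho^*}$ as soon as $Kc^2 < \rho^*$; and since $F \geq 0$ we get $\inf_{\mathcal{B}_{\rho^*}} E_\tau \leq E_\tau(\varphi_1) \leq \tfrac12\|\nabla\varphi_1\|_{L^2(D)}^2 \leq \tfrac12 K c^2$, uniformly in $\tau$.

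Finally, choosing $c_2 > 0$ small enough that both $K c^2 < \rho^*$ and $\tfrac12 K c^2 < \tfrac14 \rho^*$ hold for $0 < c < c_2$ yields $m_\tau \leq \tfrac12 K c^2 < \tfrac14\rho^* \leq \inf_{\mathcal{U}_{\rho^*}} E_\tau$, which is precisely \eqref{eq4.2}; all thresholds are uniform in $\tau \in [\tfrac12,1]$. The only genuinely delicate point is the bookkeeping of the $c$–dependence: the lower bound on $\mathcal{U}_{\rho^*}$ must converge to a positive constant while the test–function value on $\mathcal{B}_{\rho^*}$ vanishes like $c^2$, and this separation of scales—coming entirely from $\beta > 1$, i.e. mass supercriticality—is what makes the strict inequality possible. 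A fixed $\rho^*$ suffices, but one could equally place $\rho^*$ at the maximum of $g$, which drifts to $+\infty$ as $c \to 0^+$, to widen the gap as much as desired.
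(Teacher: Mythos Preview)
Your argument is correct and follows the same overall strategy as the paper: bound $E_\tau$ from below on $\mathcal{U}_{\rho^*}$ via Gagliardo--Nirenberg, and bound $m_\tau$ from above using the normalized first eigenfunction $\varphi_1$ as a test element of $\mathcal{B}_{\rho^*}$. The only noteworthy difference is in the choice of $\rho^*$: you fix an arbitrary $\rho^* > 0$ and let $c \to 0^+$ drive the lower bound on $\mathcal{U}_{\rho^*}$ up to $\tfrac12\rho^*$ while the upper bound on $\mathcal{B}_{\rho^*}$ collapses like $c^2$; the paper instead takes $\rho^*$ to be the maximizer of the one-variable profile $f(\rho)$ (so $\rho^*$ depends on $c$ and drifts to $+\infty$ as $c \to 0^+$), and then verifies two explicit inequalities \eqref{eq4.10}--\eqref{eq4.11}. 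Your fixed-$\rho^*$ version is a bit cleaner for the bare existence statement; the paper's optimized $\rho^*$ has the mild advantage of being reused in Proposition~\ref{prop4.3}, where the monotonicity of $\rho^*$ in $\tau$ (via \eqref{eq4.6}) is invoked to make the mountain-pass geometry uniform in $\tau \in [\tfrac12,1]$. You already note this alternative in your closing remark, so the two proofs are essentially interchangeable.
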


\begin{proof}
    By ($\mathbf{A_1}$) and ($\mathbf{A_2}$), for any $\varepsilon > 0$, there exists some $C_\varepsilon > 0$ such that
    \begin{equation*}
        F(s) \leq \frac{\varepsilon}{2} |s|^2 + \frac{C_\varepsilon}{q} |s|^q, \quad s \in \mathbb{R}.
    \end{equation*}
    We will only proof the lemma for $\mu \geq 0$ and the proof for $\mu < 0$ is similar.
    For $v \in \mathcal{U}_\rho$, we have
    \begin{equation*}
        \begin{aligned}
            E_\tau (v) & =\frac{1}{2}\|\nabla v\|_{L^2(D)}^2-\tau \int_D |x|^\mu F(u) \dif x \\
            & \geq \frac{1}{2}\|\nabla v\|_{L^2(D)}^2- \tau \int_D |x|^\mu \left(\frac{\varepsilon}{2} |v|^2 + \frac{C_\varepsilon}{q} |v|^q \right) \, \dif x \\
            & = \frac{1}{2}\|\nabla v\|_{L^2(D)}^2 - \tau \left(\frac{\varepsilon}{2} c^2 + \frac{C_\varepsilon}{q} b^\mu \|v\|_{L^q(D)}^q \right) \\
            & \geq \frac{1}{2}\rho - \tau \left(\frac{\varepsilon}{2} c^2 + \frac{C_\varepsilon}{q} b^\mu C_{M, q} \|\nabla v\|_{L^2(D)}^{\gamma_q q} \|v\|_{L^2(D)}^{(1-\gamma_q) q} \right) \\
            & \geq \frac{1}{2}\rho - \tau \left(\frac{\varepsilon}{2} c^2 + \frac{C_\varepsilon}{q} b^\mu C_{M, q} \left(\frac{c^2}{a^\mu}\right)^{(1-\gamma_q) \frac{q}{2}} \rho^{\frac{\gamma_q q}{2}} \right) \\
            & = \frac{1}{2}\rho - C_1 \tau c^2 - C_2 \tau \rho^{\frac{\gamma_q q}{2}} c^{(1-\gamma_q) q} \\
            & := f(\rho),
        \end{aligned}
    \end{equation*}
    where $C_1 = \frac{\varepsilon}{2}$, $C_2$ depends on $M, q, a, b, \varepsilon, \mu$. Now we have
    \begin{equation}\label{eq4.6}
        f'(\rho) = \frac{1}{2} - C_2 \tau \frac{\gamma_q q}{2} \rho^{\frac{\gamma_q q}{2}-1} c^{(1-\gamma_q) q}.
    \end{equation}
    Because of $\frac{\gamma_q q}{2} > 1$, we may deduce that there is a unique $\rho^*$ such that $f'(\rho^*) = 0$, i.e.
    \begin{equation*}
        \begin{aligned}
            & f(\rho^*) = \max f(\rho) 
             = C_2 \tau \left(\frac{\gamma_q q}{2}-1\right) (\rho^*)^{\frac{\gamma_q q}{2}} c^{(1-\gamma_q) q} - C_1 \tau c^2.
        \end{aligned}
    \end{equation*}
    So we have 
    \begin{equation*}
        \begin{aligned}
            \inf _{\mathcal{U}_{\rho^*}} E_\tau(v) \geq C_2 \tau \left(\frac{\gamma_q q}{2}-1\right) (\rho^*)^{\frac{\gamma_q q}{2}} c^{(1-\gamma_q) q} - C_1 \tau c^2.
        \end{aligned}
    \end{equation*}
    On the other hand, we have
    \begin{equation*}
        \inf_{\mathcal{B}_{\rho^*}} E_\tau(v) \leq E_\tau(\varphi_1) \leq \frac{1}{2} \lambda_1 \tau a^{-\mu} c^2.
    \end{equation*}
    As a consequence, \eqref{eq:AP} holds if we have
    \begin{equation*}
            C_2 \tau \left(\frac{\gamma_q q}{2}-1\right) (\rho^*)^{\frac{\gamma_q q}{2}} c^{(1-\gamma_q) q} - C_1 \tau c^2 \\
            > \frac{1}{2} \lambda_1 \tau a^{-\mu} c^2,
    \end{equation*}
    that is,
    \begin{equation}\label{eq4.10}
        C_2 \left(\frac{\gamma_q q}{2}-1\right) (\rho^*)^{\frac{\gamma_q q}{2}} c^{(1-\gamma_q) q-2} > C_1 + \frac{1}{2} \lambda_1 a^{-\mu},
    \end{equation}
    and the condition
    \begin{equation}\label{eq4.11}
        \rho^* \geq \lambda_1 b^{-\mu} c^2
    \end{equation}
    also needs to be satisfied to guarantee $\mathcal{B}_{\rho^*}$ is non-empty.

    Notice that \eqref{eq4.6} shows that $\rho^*$ increases as the mass $c$ decreases. So for condition \eqref{eq4.11}, as the mass $c$ decreases the left-hand side increases and the right-hand side decreases. So there is a $c_1^*>0$ such that \eqref{eq4.11} holds when $0 < c < c_1^*$.
    
    For condition \eqref{eq4.10}, from $M \geq 3$ we have $ (1-\gamma_q) q - 2 < 0$. So for $c$ decreases the left-hand side increases. So there is a $c_2^*>0$ such that \eqref{eq4.10} holds when $0 < c < c_2^*$. As a result, if we choose $c_2 = \min\{c_1^*, c_2^*\}$, \eqref{eq4.2} holds. The proof for the case $\mu < 0$ is similar.
\end{proof}

\begin{proof}[Proof of Theorem \ref{thm1.2} (iii): Existence of a local minimizer]
    By Proposition \ref{prop4.1}, for any $c \in \left(0, c_2\right)$ there exists a minimizing sequence $\{v_n\} \subset \mathcal{B}_{\bar{\rho}}$ associated to $m = m_1$, by ($\mathbf{A_1}$) we can assume that $v_n \geq 0$ for every $n$. From the Ekeland variational principle, by choosing another minimizing sequence if necessary, $\{v_n\} \subset \mathcal{B}_\rho$ is a bounded nonnegative Palais-Smale sequence for $E_1 = E$ at level $m$. Using Lemma \ref{lem4.2}, we can find a minimizer $\tilde{v}_c \geq 0$ such that $E(\tilde{v}_c) = m$. By the strong maximum principle, $\tilde{v}_c > 0$ in $D$.
\end{proof}

\subsection{The mountain pass geometry}

With the local minimizer above, we can also show the mountain pass geometry of the functional $E_\tau$ on ${\S}_c$ uniformly with respect to $\tau \in [\frac{1}{2},1]$.
\begin{proposition}\label{prop4.3}
    For each $c \in (0, c_2)$, there exist $w_1, w_2 \in {\S}_c$ independent of $\tau$, such that
    \begin{equation*}
    \tilde{m}_\tau:=\inf _{\gamma \in \Gamma} \max _{t \in [0,1]} E_\tau(\gamma(t))>\max \left\{E_\tau\left(w_1\right), E_\tau\left(w_2\right)\right\}, \forall \tau \in\left[\frac{1}{2}, 1\right],
    \end{equation*}
    where $\Gamma=\left\{\gamma \in C\left([0,1], {\S}_c\right) \mid \gamma(0)=w_1, \gamma(1)=w_2\right\}$.
\end{proposition}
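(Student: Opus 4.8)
The plan is to exhibit two explicit endpoints $w_1,w_2\in\S_c$, both independent of $\tau$, such that the mountain-pass level $\tilde m_\tau$ strictly exceeds the energy at both endpoints, uniformly in $\tau\in[\frac12,1]$. The natural choice for the low-gradient endpoint is the local minimizer from the previous subsection: take $w_1\in\mathcal B_{\rho^*}$ realizing (or nearly realizing) $m_\tau=\inf_{\mathcal B_{\rho^*}}E_\tau$, so that $w_1$ sits on the inner side of the barrier $\mathcal U_{\rho^*}$. For $w_2$ I would take a function of very large Dirichlet energy lying in $\mathcal B_{\rho^*}^{c}$ (beyond the barrier), constructed by taking a fixed profile and rescaling it so as to keep the mass constraint $\int_D|x|^\mu v^2\,\dif x=c^2$ while driving $\|\nabla v\|_{L^2}^2$ above $\rho^*$ and, crucially, driving $E_\tau(w_2)$ below $m_\tau$.

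First I would fix $c\in(0,c_2)$ and recall from Lemma \ref{prop4.1} that on the sphere $\mathcal U_{\rho^*}$ the energy satisfies $\inf_{\mathcal U_{\rho^*}}E_\tau>m_\tau$ for every $\tau\in[\frac12,1]$ — this is precisely the separation \eqref{eq4.2}, and the key point is that the lower bound $f(\rho^*)$ derived there is an increasing function of $\tau$ (since $\tau\le 1$ only weakens the subtracted term), so the strict gap holds \emph{uniformly} in $\tau$. Next, since $2^\#_M<q<\frac{2M}{M-2}$, I would use assumption $(\mathbf{A_2})$ to show $E_\tau$ is unbounded below on $\S_c$: choosing a fixed $\phi\in\S_c$ and the mass-preserving dilations $\phi_t(x)=t^{M/2}\phi(tx)$ (suitably adapted to the weight and the annulus, or more simply a one-parameter family staying in $\S_c$ with $\|\nabla\phi_t\|_{L^2}\to\infty$), the nonlinear term grows like $\|\nabla\phi_t\|^{\gamma_q q}$ with $\gamma_q q>2$, so $E_\tau(\phi_t)\to-\infty$. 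I would pick $t$ large enough (independent of $\tau$, using the uniform lower bound $\tau\ge\frac12$ on the favorable nonlinear term) that $w_2:=\phi_t$ satisfies both $w_2\in\mathcal B_{\rho^*}^c$ and $E_\tau(w_2)<m_\tau$ for all $\tau\in[\frac12,1]$.

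With $w_1,w_2$ in hand, every path $\gamma\in\Gamma$ joining $w_1\in\mathcal B_{\rho^*}$ to $w_2\notin\overline{\mathcal B_{\rho^*}}$ must cross the sphere $\mathcal U_{\rho^*}$, because the gradient norm $t\mapsto\|\nabla\gamma(t)\|_{L^2}^2$ is continuous and takes a value below $\rho^*$ at $t=0$ and above $\rho^*$ at $t=1$; hence $\max_{t}E_\tau(\gamma(t))\ge\inf_{\mathcal U_{\rho^*}}E_\tau$. Taking the infimum over $\gamma$ gives $\tilde m_\tau\ge\inf_{\mathcal U_{\rho^*}}E_\tau>m_\tau\ge E_\tau(w_1)$, and $\tilde m_\tau>E_\tau(w_2)$ holds by construction, which is the claim. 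I expect the main obstacle to be arranging $w_1$ and $w_2$ to be genuinely \emph{$\tau$-independent} while still controlling all three quantities ($E_\tau(w_1)$, $E_\tau(w_2)$, and the barrier height) simultaneously for every $\tau\in[\frac12,1]$: this requires checking that the monotonicity in $\tau$ points the right way for each estimate, and that the single large dilation parameter $t$ can be chosen once and for all using the worst case $\tau=\frac12$ in the nonlinear lower bound.
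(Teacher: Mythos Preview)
Your overall strategy---place $w_1$ inside $\mathcal B_{\rho^*}$, push $w_2$ far outside with very negative energy via a concentrating bump, and use continuity to force every path across the barrier $\mathcal U_{\rho^*}$---is exactly the paper's approach, and your construction of $w_2$ (rescaled profile, worst-case bound $E_\tau\le E_{1/2}$) matches it essentially verbatim.

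The one point where the paper differs, and where your proposal has a genuine gap, is the choice of $w_1$. You propose the local minimizer $\tilde v_c$ from the previous subsection; the paper instead takes $w_1=\varphi_1$, the first Dirichlet eigenfunction normalized to lie in $\S_c$. This matters for precisely the reason you anticipate at the end: the local minimizer of $E_\tau$ on $\mathcal B_{\rho^*}$ depends on $\tau$, and if you fix the minimizer $\tilde v_c$ of $E_1$, then for $\tau<1$ you only know $E_\tau(\tilde v_c)=m_1+(1-\tau)\int_D|x|^\mu F(\tilde v_c)\,\dif x\ge m_1$, which need not stay below $\inf_{\mathcal U_{\rho^*}}E_\tau$. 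The choice $w_1=\varphi_1$ sidesteps this because the inequality $E_\tau(\varphi_1)<\inf_{\mathcal U_{\rho^*}}E_\tau$ is already wired into the proof of Lemma~\ref{prop4.1} through condition~\eqref{eq4.10}, uniformly in $\tau$ once $\rho^*$ is fixed at its $\tau=\tfrac12$ value.

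One small correction: the lower bound $f(\rho^*)$ on the barrier is \emph{decreasing} in $\tau$ (the subtracted term carries a factor $\tau$), not increasing. What makes the argument uniform is not monotonicity of $f(\rho^*)$ but the fact that \eqref{eq4.10} and \eqref{eq4.11} hold simultaneously for every $\tau\in[\tfrac12,1]$ once $\rho^*$ and $c_2$ are chosen appropriately.
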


\begin{proof}
    First we notice that from \eqref{eq4.6}, the value $\rho^*$ decreases as $\tau \in\left[\frac{1}{2}, 1\right]$ decreases. 
    So we may assume that $\rho^*$ is the one corresponding to $\tau = \frac{1}{2}$. Now \eqref{eq4.10} and \eqref{eq4.11} hold for any $\tau \in\left[\frac{1}{2}, 1\right]$ when $0 < c < c_2$. 

    Now we can choose $w_1 = \varphi_1$, which is independent of $\tau$. To obtain $w_2$ we choose $\phi \in C_0^{\infty}\left(B_1(0)\right)$ with $\phi>0$ in $B_1(0)$ and $\int_{B_1(0)} \phi^2 \, \dif x=1$, and $x_1 \in D$. For $k \in \mathbb{N}$, we may define
    \begin{equation*}
        v_k(x)= \chi k^{\frac{M}{2}} \phi\left(k\left(x-x_1\right)\right), x \in D,
    \end{equation*} 
    where $\chi>0$ is chosen such that $\int_D |x|^\mu v_k^2 \dif x = c^2$. By definition, $\operatorname{supp}\left(v_k\right) \subset B_{1/k}\left(x_1\right) \subset D$ for $k$ sufficiently large. Thus, for $k$ sufficiently large, $v_k \in {\S}_c$.
    Now
    \begin{equation*}
        c^2 = \int_D |x|^\mu v_k^2 \dif x \sim \chi^2 \int_{B_1(0)} \phi^2 \dif x = \chi^2.
    \end{equation*} 
    Thus we have $\chi \sim c$. By a straightforward calculation, it follows that as $k \rightarrow +\infty$,
    \begin{equation*}
        \left\|v_k\right\|_{H_0^1(D)} \sim c \left(k^2 \int_{B_1(0)}|\nabla \phi|^2 \dif x+1\right)^{\frac{1}{2}} \rightarrow+\infty.
    \end{equation*}
    By ($\mathbf{A_2}$) there exists some $R_0 > 0$ such that
    \begin{equation*}
        F(s) \geq \frac{a_0}{2q} |s|^q, \quad \forall |s| \geq R_0.
    \end{equation*} 
    If we denote $D_k = \left\{x \in D: |v_k(x)| \geq R_0\right\}$, then for $\mu \geq 0$ we have
    \begin{equation*}
        \begin{aligned}
            E_\tau\left(v_k\right) & \leq E_{{1}/{2}}\left(v_k\right) \\
            & = \frac{1}{2} \int_D |\nabla v_k|^2 \dif x - \frac{1}{2} \int_D |x|^\mu F\left(v_k\right) \dif x \\
            & \leq \frac{1}{2} \int_D |\nabla v_k|^2 \dif x - \frac{a_0}{4q} \int_{D_k} |x|^\mu |v_k|^q \dif x \\
            & \leq \frac{1}{2} \int_D |\nabla v_k|^2 \dif x - \frac{a_0}{4q} a^\mu \int_{D_k} |v_k|^q \dif x \\
            & \leq \frac{\chi^2 k^2}{2} \int_{B_1(0)} |\nabla \phi|^2 \dif x + \frac{a_0 R_0^q}{4q} |D| - \frac{a_0 \chi^q k^{\frac{M(q-2)}{2}}}{4q} \int_{B_1(0)} |\phi|^q \dif x \rightarrow -\infty
        \end{aligned}
    \end{equation*}
    as $k \rightarrow +\infty$ since $M (q-2)/2 > 2$ when $q > 2^\sharp_M$. The case $\mu < 0$ is similar by interchanging $a$ and $b$. So we can take a $k_0>0$ sufficiently large and independent of $\tau$, and let $w_2=u_{k_0}$. Then we have
    \begin{equation}\label{eq4.16}
        \int_{\Omega}\left|\nabla w_2\right|^2 \dif x > 2\rho^* \text { and } E_\tau\left(w_2\right) \leq E_{\frac{1}{2}}\left(w_2\right)< m_{\frac{1}{2}}, \forall \tau \in\left[\frac{1}{2}, 1\right] .
    \end{equation}
    Now we let $\Gamma$ and $m_\tau$ be defined as in the statement of the proposition for our choice of $w_1$ and $w_2$, then $\Gamma \neq \emptyset$ holds since
    \begin{equation*}
        \gamma_0(t)=\frac{c}{\left\|(1-t) w_1+t w_2\right\|_{L^2(\Omega)}}\left[(1-t) w_1+t w_2\right], \quad t \in[0,1]
    \end{equation*}
    belongs to $\Gamma$. In particular, since $w_1 \in \mathcal{B}_{\rho^*}$, \eqref{eq4.16} infers that for any $\gamma \in \Gamma$, $\gamma([0,1])$ intersects $\mathcal{U}_{\rho^*}$. Therefore, by definition
    \begin{equation*}
        \tilde{m}_\tau:=\inf _{\gamma \in \Gamma} \max _{t \in [0,1]} E_\tau(\gamma(t)) \geq \inf_{\mathcal{U}_{\rho^*}} E_\tau>\max \left\{E_\tau\left(w_1\right), E_\tau\left(w_2\right)\right\}, \forall \tau \in\left[\frac{1}{2}, 1\right],
    \end{equation*}
    which completes the proof.
\end{proof}

\subsection{The abstract framework of monotonicity trick}

To establish the existence of normalized solutions of mountain pass type for problem \eqref{eq:AP}, we need to use a variant of the monotonicity trick in \cite{Borthwick2024}. We first introduce some notations and definitions.

Let $(E,\langle\cdot, \cdot\rangle)$ and $(H,(\cdot, \cdot))$ be two infinite-dimensional Hilbert spaces such that $E \hookrightarrow H \hookrightarrow E^{\prime}$ with continuous injections. For simplicity, we suppose that $E \hookrightarrow H$ has norm at most 1 and identify $E$ with its image in $H$. We define
\begin{equation*}
    \begin{cases}
        \|u\|^2=\langle u, u\rangle, \\
        |u|^2=(u, u),
    \end{cases}
\end{equation*}
for each $u \in E$, and
\begin{equation*}
    \mathcal{S}_c=\left\{u \in E| \, |u|^2=c\right\}, \forall c>0 .
\end{equation*}
Obviously, $\mathcal{S}_c$ is a submanifold of $E$ of codimension 1 and its tangent space at a given point $u \in \mathcal{S}_c$ is given by
\begin{equation*}
T_u \mathcal{S}_c=\{v \in E \mid(u, v)=0\}.
\end{equation*}
Denote by $\|\cdot\|_*$ and $\|\cdot\|_{* *}$, respectively, the operator norm of $\mathcal{L}(E, \mathbb{R})$ and $\mathcal{L}(E, \mathcal{L}(E, \mathbb{R}))$.  
\begin{definition}[{\cite[Definition 1.1, Definition 1.3, Definition 1.4]{Borthwick2024}}]
    Let $\phi: E \rightarrow \mathbb{R}$ be a $C^2$-functional on $E$.
    \begin{enumerate}
        \item[(i)] We say that $\phi^{\prime}$ and $\phi^{\prime \prime}$ are $\alpha$-Hölder continuous on bounded sets for some $\alpha \in(0,1]$ if for any $R > 0$ one can find $M=M(R)>0$ such that, for any $u_1, u_2 \in B_R(0)$,
                \begin{equation*}
                    \left\|\phi^{\prime}-\phi^{\prime}\right\|_* \leq M\left\|u_1-u_2\right\|^\alpha,\left\|\phi^{\prime \prime}-\phi^ {\prime \prime}\right\|_{* *} \leq M\left\|u_1-u_2\right\|^\alpha .
                \end{equation*}
        \item[(ii)] For any $u \in E$, we define the continuous bilinear map
                \begin{equation*}
                    D^2 \phi(u)=\phi^{\prime \prime}(u)-\frac{\phi^{\prime}(u) \cdot u}{|u|^2}(\cdot, \cdot) .
                \end{equation*}
        \item[(iii)] For any $u \in \mathcal{S}_c$ and $\theta>0$, we define the approximate Morse index by
                \begin{equation*}
                    \begin{aligned}
                    \tilde{m}_\theta(u)= & \sup \left\{\operatorname{dim} L \mid L \text { is a subspace of } T_u \S_c \text { such that } D^2 \phi(u)[\varphi, \varphi]<-\theta\|\varphi\|^2, \forall \varphi \in L \backslash\{0\}\right\} .
                    \end{aligned}
                \end{equation*}
                If $u$ is a critical point for the constrained functional $\left.\phi\right|_{\S_c}$ and $\theta=0$, we denote $\tilde{m}(u)$ is the Morse index of $u$ as constrained critical point.
    \end{enumerate}
\end{definition}

Now we can state the abstract result of the monotonicity trick.

\begin{lemma}[{\cite[Theorem 1.5]{Borthwick2024}}]\label{lem4.5}
    Let $I \subset(0,+\infty)$ be an interval and consider a family of $C^2$ functional $\Phi_\rho: E \rightarrow \mathbb{R}$ of the form
    \begin{equation*}
        \Phi_\rho(u)=A(u)-\rho B(u), \quad \rho \in I,
    \end{equation*}
    where $B(u) \geq 0$ for every $u \in E$, and
    \begin{equation*}
        \text { either } A(u) \rightarrow+\infty \text { or } B(u) \rightarrow+\infty \text { as } u \in E \text { and }\|u\| \rightarrow+\infty \text {. }
    \end{equation*}

    Suppose moreover that $\Phi_\rho^{\prime}$ and $\Phi_\rho^{\prime \prime}$ are $\alpha$-Hölder continuous on bounded sets for some $\alpha \in(0,1]$. Finally, suppose that there exist $w_1, w_2 \in \mathcal{S}_c$ (independent of $\rho$) such that, setting
    \begin{equation*}
        \Gamma=\left\{\gamma \in C\left([0,1], \mathcal{S}_c\right) \mid \gamma(0)=w_1, \gamma(1)=w_2\right\},
    \end{equation*}
    we have
    \begin{equation*}
        c_\rho=\inf _{\gamma \in \Gamma} \max _{t \in[0,1]} \Phi_\rho(\gamma(t))>\max \left\{\Phi_\rho\left(w_1\right), \Phi_\rho\left(w_2\right)\right\}, \quad \rho \in I .
    \end{equation*}

    Then, for almost every $\rho \in I$, there exist sequences $\left\{u_n\right\} \subset \S_c$ and $\zeta_n \rightarrow 0^{+}$ such that, as $n \rightarrow+\infty$,
    \begin{enumerate}
        \item[(i)] $\Phi_\rho\left(u_n\right) \rightarrow c_\rho$;
        \item[(ii)] $\left\|\Phi_\rho^{\prime} \mid_{\mathcal{S}_c}(u_n) \right\|_* \rightarrow 0$;
        \item[(iii)] $\left\{u_n\right\}$ is bounded in $E$;
        \item[(iv)] $\tilde{m}_{\zeta_n}\left(u_n\right) \leq 1$.
    \end{enumerate}
\end{lemma}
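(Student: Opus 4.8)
The plan is to combine Jeanjean's monotonicity trick with a minimax principle carrying second-order (Morse index) information, in the spirit of Fang--Ghoussoub but adapted to the constraint manifold $\mathcal{S}_c$. The starting observation is purely order-theoretic: since $B(u)\geq 0$ for every $u$, for $\rho_1<\rho_2$ one has $\Phi_{\rho_1}(u)\geq\Phi_{\rho_2}(u)$ pointwise, so along any admissible path $\gamma\in\Gamma$ the maximum of $\Phi_{\rho_1}$ dominates that of $\Phi_{\rho_2}$; taking the infimum over $\Gamma$ shows that $\rho\mapsto c_\rho$ is non-increasing on $I$. A monotone real function is differentiable at almost every point, so $c_\rho'$ exists for a.e. $\rho\in I$. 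I would fix one such point of differentiability $\rho$ and produce the four conclusions there, which is exactly what yields the ``for almost every $\rho$'' in the statement.

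Second, I would extract a bounded Palais--Smale sequence at level $c_\rho$, giving items (i)--(iii). Choose a sequence $\rho_n\to\rho$ and near-optimal paths $\gamma_n\in\Gamma$ for $c_{\rho_n}$. The heart of the trick is that for parameters $t$ at which $\Phi_{\rho_n}(\gamma_n(t))$ is close to $c_{\rho_n}$, the quantity $B(\gamma_n(t))$ is controlled by the difference quotient $\frac{c_{\rho_n}-c_\rho}{\rho_n-\rho}$, which stays bounded as $n\to\infty$ precisely because $\rho\mapsto c_\rho$ is differentiable at $\rho$. A uniform bound on $B$ near the top of the path, combined with the identity $A=\Phi_\rho+\rho B$ and the dichotomy ``$A(u)\to+\infty$ or $B(u)\to+\infty$ as $\|u\|\to+\infty$'', forces $A$ to be bounded there as well and hence $\|\gamma_n(t)\|$ to remain bounded on the relevant portion of $\gamma_n$. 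A quantitative deformation argument on $\mathcal{S}_c$ (using that $\Phi_\rho$ is $C^1$ and that near $\mathcal{S}_c$ the projected gradient is well behaved) then produces points $u_n\in\mathcal{S}_c$ with $\Phi_\rho(u_n)\to c_\rho$, $\|\Phi_\rho'|_{\mathcal{S}_c}(u_n)\|_*\to 0$, and $\{u_n\}$ bounded in $E$.

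Third, to upgrade this to the approximate Morse index bound (iv), I would replace the plain deformation by one built from a pseudo-gradient flow for $\Phi_\rho$ tangent to $\mathcal{S}_c$ and invoke a minimax theorem with Morse-type information. The heuristic is that, because the minimax class $\Gamma$ consists of one-parameter paths, at the minimax level one cannot decrease the functional below $c_\rho$ except along a subspace of $T_u\mathcal{S}_c$ of dimension at most $1$; on the complementary directions the constrained second variation $D^2\Phi_\rho(u)$ must be essentially nonnegative. Making this rigorous is where the $C^2$ regularity and the $\alpha$-Hölder continuity of $\Phi_\rho'$ and $\Phi_\rho''$ on bounded sets enter: they permit one to propagate second-order (quadratic) estimates along the flow and to localize near an approximate critical point, so that the output sequence satisfies $\tilde m_{\zeta_n}(u_n)\leq 1$ for a suitable $\zeta_n\to 0^+$.

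The hard part will be the third step, namely coupling the boundedness obtained from the monotonicity trick with the Morse index control. The difficulty is to construct a single deformation on $\mathcal{S}_c$ that simultaneously stays near the level $c_\rho$, keeps the constructed sequence bounded (so that the Hölder constants from the regularity hypotheses apply uniformly), respects the codimension-one constraint through the Lagrange-multiplier correction built into $D^2\Phi_\rho(u)$, and transports the second-order negativity/nonnegativity information. Verifying that $\alpha$-Hölder continuity of $\Phi_\rho''$ is exactly the regularity needed to carry the quadratic estimates through the flow, and that the one-dimensionality of $\Gamma$ caps the approximate Morse index at $1$, is the technical core of the argument.
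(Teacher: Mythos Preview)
The paper does not prove this lemma at all: it is quoted verbatim as \cite[Theorem 1.5]{Borthwick2024} and used as a black box. There is therefore no ``paper's own proof'' to compare your proposal against.

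That said, your sketch is a faithful outline of the strategy in the cited reference: the monotonicity of $\rho\mapsto c_\rho$ coming from $B\geq 0$, differentiability a.e.\ giving a bound on $B$ along near-optimal paths at points of differentiability, the dichotomy hypothesis yielding boundedness in $E$, and then a constrained deformation argument with second-order information (exploiting the $C^2$ and H\"older regularity) to cap the approximate Morse index by the dimension of the minimax family. Your identification of the third step as the technical core is accurate; the actual execution in \cite{Borthwick2024} requires a careful pseudo-gradient construction on $\mathcal{S}_c$ that simultaneously respects boundedness and transports the quadratic form information, which is nontrivial but which you have correctly flagged rather than glossed over.
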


Now we can use the abstract framework above to attain a mountain pass critical point of $E_{\tau}$ for almost every $\tau \in [\frac{1}{2},1]$. For our propose, in the following we denote $E = H_0^1(D)$ and $H = L^2(D)$, equipped with another inner product
    \begin{equation*}
            (u, v)_H=\int_D|x|^\mu u v \, \dif x 
    \end{equation*}
    and norm
    \begin{equation*}
        \|v\|_H=\left(\int_D|x|^\mu v^2 \, \dif x\right)^{\frac{1}{2}}.
    \end{equation*}
    Due to the boundedness of $D$, the norm $\|\cdot\|_H$ is equivalent to the standard norm of $L^2(D)$ for any $\mu \in \mathbb{R}$.
\begin{lemma}\label{lem4.6}
    Let $0< c < c_2$. For almost every $\tau \in \left[\frac{1}{2}, 1\right]$, there exists $\left(v_\tau, \lambda_\tau\right) \in \mathcal{S}_c \times \mathbb{R}$ which solves \eqref{eq:AP}. Moreover, $m\left(v_\tau\right) \leq 2$.
\end{lemma}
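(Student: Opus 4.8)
The plan is to apply the abstract monotonicity trick of Lemma \ref{lem4.5} to the family $E_\tau$, and then to upgrade the resulting bounded Palais--Smale sequence to an actual solution via the compactness Lemma \ref{lem4.2}. First I would set up the correspondence with the abstract framework: with $E = H_0^1(D)$ and $H = L^2(D)$ equipped with the weighted inner product $(u,v)_H = \int_D |x|^\mu uv \, \dif x$, the constraint manifold $\mathcal{S}_c = \{u \in E : \|u\|_H^2 = c^2\}$ matches the abstract one (up to relabeling $c^2$ as the abstract $c$). I would write $E_\tau(u) = A(u) - \tau B(u)$ with $A(u) = \frac{1}{2}\|\nabla u\|_{L^2}^2$ and $B(u) = \int_D |x|^\mu F(u)\, \dif x$, taking the role of $\rho$ in the abstract statement to be played by $\tau$ on the interval $I = [\frac{1}{2},1]$.

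The hypotheses of Lemma \ref{lem4.5} then need to be verified one by one. I would check that $B(u) \geq 0$ (which follows from $F \geq 0$, a consequence of ($\mathbf{A_1}$)), and that $A(u) \to +\infty$ as $\|u\| = \|\nabla u\|_{L^2} \to +\infty$ (immediate, since $A(u) = \frac{1}{2}\|u\|^2$). The $\alpha$-Hölder continuity of $E_\tau'$ and $E_\tau''$ on bounded sets must be deduced from the $C^1$ regularity and growth of $f$ provided by ($\mathbf{A_1}$)--($\mathbf{A_2}$); since $f$ has subcritical growth of order $q-1 < \frac{M+2}{M-2}$, the Nemytskii operators involved are differentiable with locally Hölder-continuous derivatives by standard arguments. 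Finally, the mountain pass geometry required by the abstract theorem is exactly what Proposition \ref{prop4.3} supplies: the same $w_1, w_2 \in \mathcal{S}_c$, independent of $\tau$, give $\tilde{m}_\tau > \max\{E_\tau(w_1), E_\tau(w_2)\}$ for every $\tau \in [\frac{1}{2},1]$.

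With all hypotheses in place, Lemma \ref{lem4.5} yields, for almost every $\tau \in [\frac{1}{2},1]$, a bounded sequence $\{v_n\} \subset \mathcal{S}_c$ with $E_\tau(v_n) \to \tilde{m}_\tau$, $\|E_\tau'|_{\mathcal{S}_c}(v_n)\|_* \to 0$, and approximate Morse index bound $\tilde{m}_{\zeta_n}(v_n) \leq 1$. This is a bounded Palais--Smale sequence for $E_\tau|_{\mathcal{S}_c}$, so I would invoke Lemma \ref{lem4.2} (whose proof goes through verbatim with the factor $\tau$ in front of the nonlinear term, since that term vanishes in the limit against $v_n - v$) to extract a subsequence converging strongly to some $v_\tau \in \mathcal{S}_c$. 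Strong convergence gives $E_\tau'|_{\mathcal{S}_c}(v_\tau) = 0$, so there is a Lagrange multiplier $\lambda_\tau \in \mathbb{R}$ with $(v_\tau, \lambda_\tau)$ solving \eqref{eq:AP}.

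The remaining assertion $m(v_\tau) \leq 2$ is where the real subtlety lies, and I expect this to be the main obstacle. The abstract output controls only the \emph{approximate} Morse index $\tilde{m}_{\zeta_n}(v_n) \leq 1$ along the sequence, with $\zeta_n \to 0^+$; this must be converted into a genuine Morse index bound for the limit $v_\tau$. The standard mechanism is a lower semicontinuity argument: if the constrained Morse index of $v_\tau$ were $\geq 3$, one could find a three-dimensional subspace of $T_{v_\tau}\mathcal{S}_c$ on which $D^2 E_\tau(v_\tau)$ is strictly negative, and by the strong $H_0^1$-convergence $v_n \to v_\tau$ together with the $\alpha$-Hölder continuity of $E_\tau''$, this negativity would persist on a nearby three-dimensional subspace of $T_{v_n}\mathcal{S}_c$ for large $n$, forcing $\tilde{m}_{\zeta_n}(v_n) \geq 3 > 1$ once $\zeta_n$ is small enough. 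The care needed is in handling the gap between the tangent spaces $T_{v_n}\mathcal{S}_c$ and $T_{v_\tau}\mathcal{S}_c$ and in the passage of the quadratic form $D^2 E_\tau$ to the limit; the bound $m(v_\tau) \leq 2$ (rather than $\leq 1$) accommodates the one extra dimension that may arise from the constraint normal direction in this transition. I would make this precise and conclude.
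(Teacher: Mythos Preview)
Your approach is essentially identical to the paper's: apply Lemma \ref{lem4.5} with the same choices of $E$, $H$, $A$, $B$, verify the hypotheses via Proposition \ref{prop4.3} and the growth assumptions, pass to a limit using Lemma \ref{lem4.2}, and then argue lower semicontinuity of the Morse index through strong $H_0^1$-convergence and H\"older continuity of $D^2 E_\tau$.

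There is, however, an off-by-one slip in your Morse index argument that, as written, would only yield $m(v_\tau) \leq 3$. You assume the \emph{constrained} Morse index $\tilde m(v_\tau) \geq 3$ and work with a three-dimensional negative subspace; but the contradiction with $\tilde m_{\zeta_n}(v_n) \leq 1$ already kicks in at dimension $2$. The paper's argument is: suppose $\tilde m(v_\tau) \geq 2$, find a two-dimensional $W \subset T_{v_\tau}\mathcal{S}_c$ on which $D^2 E_\tau(v_\tau) < 0$, transfer this (via H\"older continuity and strong convergence) to $v_n$ for large $n$, forcing $\tilde m_{\zeta_n}(v_n) \geq 2$, a contradiction. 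Hence $\tilde m(v_\tau) \leq 1$, and since $\mathcal{S}_c$ has codimension one in $H_0^1(D)$, the free Morse index satisfies $m(v_\tau) \leq \tilde m(v_\tau) + 1 \leq 2$. Your final sentence about the ``one extra dimension from the constraint normal direction'' is exactly this last step; you just need to start the contradiction at $2$ rather than $3$. A minor further point: the paper also replaces $v_n$ by $|v_n|$ to obtain positivity of $v_\tau$ via the strong maximum principle --- not strictly required by the lemma as stated, but used downstream.
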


\begin{proof}
    We can utilize Lemma \ref{lem4.5} to the family of functionals $E_\tau$, with $E=H_0^1(D)$, $H=L^2(D)$ equipped with the inner product above, $c_\rho = \tilde{m}_\tau$ and $\Gamma$ defined in Proposition \ref{prop4.3}. If we set 
    \begin{equation*}
        A(v) = \frac{1}{2} \int_D |\nabla v|^2  \dif x \quad \text{ and } \quad B(v) = \tau \int_D |x|^\mu F(v) \, \dif x,
    \end{equation*}
    then $A(v)$ and $B(v)$ satisfy the assumptions in Lemma \ref{lem4.5}. Since $E_\tau$ is $C^2$ on ${\S}_c$, $E_\tau^\prime$ and $E_\tau^{\prime \prime}$ are local Hölder continuous on ${\S}_c$. Thus all the assumptions of Lemma \ref{lem4.5} are satisfied.

    Therefore, for almost $\tau \in \left[\frac{1}{2}, 1\right]$, there exists a bounded Palais-Smale sequence $\left\{v_{n, \tau}\right\} \subset H_0^1(D)$ for $E_\tau$ constrained on ${\S}_c$ at the level $\tilde{m}_\tau$ and a sequence $\left\{\zeta_n\right\} \subset \mathbb{R}^{+}$ with $\zeta_n \rightarrow 0^{+}$ satisfying $\tilde{m}_{\zeta_n}\left(v_{n, \tau}\right) \leq 1$. For simplicity, we still denote $v_{n, \tau}$ by $v_n$. Since the map $v \mapsto |v|$ is continuous, and $E_\tau(v)=E_\tau(|v|)$, we may assume that $v_n \geq 0$. Using Lemma \ref{lem4.2} for the functional $E_\tau$, we derive that there exists $0 \leq v_\tau \in \mathcal{S}_c$ such that $v_n \rightarrow v_\tau$ in $H_0^1(D)$, which implies that, for some $\lambda_\tau \in \mathbb{R},\left(v_\tau, \lambda_\tau\right)$ satisfies the problem
    \begin{equation*}
        \begin{cases}
            -\Delta v_\tau+ \lambda_\tau |x|^\mu v_\tau= \tau |x|^\mu f(v_\tau) & \text { in } D, \\ 
            v_\tau =0 & \text { on } \partial D, \\
            \int_D |x|^\mu v_\tau^2 \, \dif x = c^2.
        \end{cases}
    \end{equation*}
    By the strong maximum principle it follows that $v_\tau > 0$ in $D$.

    Now we will show that $m\left(v_\tau\right) \leq 2$. First, we deduce that $\tilde{m}(v_\tau) \leq 1$. Suppose by contradiction that $\tilde{m}(v_\tau) \geq 2$. Then there exists a subspace $W \subset T_0 {\S}_c$ with $\operatorname{dim} W=2$ such that
    \begin{equation*}
        D^2 E_\tau(v_\tau)[\varphi, \varphi]<0, \quad \forall \varphi \in W \backslash\{0\}.
    \end{equation*}
    Since $W$ is finite-dimensional, by the continuity of $D^2 E_\tau$ and we have that there exists some $\delta>0$ such that
    \begin{equation*}
        D^2 E_\tau(v)[\varphi, \varphi] < -\delta \|\varphi\|^2, \quad \forall \varphi \in W \backslash\{0\}.
    \end{equation*}
    Since $D^2 E_\tau$ is $\alpha$-Hölder continuous on bounded sets, we have that there exists some $\bar{\delta} > 0$ such that if $\|v - v_\tau\| < \bar{\delta}$, then
    \begin{equation*}
        D^2 E_\tau(v)[\varphi, \varphi] < -\frac{\delta}{2} \|\varphi\|^2, \quad \forall \varphi \in W \backslash\{0\}.
    \end{equation*}
    Since $v_n \rightarrow v_\tau$ in $H_0^1(D)$, there exists some $n_0 \in \mathbb{N}$ such that for all $n \geq n_0$, $\|v_n - v_\tau\| < \bar{\delta}$, which implies that
    \begin{equation*}
        D^2 E_\tau(v_n)[\varphi, \varphi] < -\frac{\delta}{2} \|\varphi\|^2, \quad \forall \varphi \in W \backslash\{0\}.
    \end{equation*}
    Therefore, $\tilde{m}_{\zeta_n}(v_n) \geq 2$ for all $n \geq n_0$, contradicting the fact that $\tilde{m}_{\zeta_n}(v_n) \leq 1$. Thus, we have $\tilde{m}(v_\tau) \leq 1$. Since ${\S}_c$ is of codimension $1$ in $H_0^1(D)$, by the definition of Morse index, we have $m(v_\tau) \leq \tilde{m}(v_\tau) + 1 \leq 2$.
\end{proof}

\subsection{The blow-up analysis}

Using Lemma \ref{lem4.6}, we can establish that there exists a sequence $\tau_n \rightarrow 1^{-}$ and a corresponding sequence of mountain pass critical points $\left\{v_{\tau_n}\right\}$ of $E_{\tau_n}$ on ${\S}_c$ at the level $\tilde{m}_{\tau_n}$ with a Morse index $m\left(v_{\tau_n}\right) \leq 2$. In order to attain a mountain pass critical point of the original problem, we need to demonstrate the convergence of $\left\{v_{\tau_n}\right\}$ to a critical point $v$ of $E|_{{\S}_c}$. To achieve this, a crucial step is to establishing the boundedness of $\left\{v_{\tau_n}\right\}$ in $H_0^1(D)$, which can be obtained if the sequence of Lagrange multiplier $\left\{\lambda_{\tau_n}\right\}$ is bounded. So in this subsection, we use a blow-up analysis method originated from \cite{Esposito2011} (see also \cite{Pierotti2017, Chang2023}) to achieve this goal.

For notational convenience, we denote $v_n:=v_{\tau_n}$, $\tilde{m}_n:=\tilde{m}_{\tau_n}$, $E_n:=E_{\tau_n}$. We will analyse the behavior of a sequence of positive solutions $\{v_n\} \subset {\S}_c$ of the following problem
\begin{equation}\label{eq4.17}
    \begin{cases}
        -\Delta v_n+ \lambda_n |x|^\mu v_n= \tau_n |x|^\mu f(v_n) & \text { in } D, \\ 
        v_n = 0 & \text { on } \partial D, \\
    \end{cases}
\end{equation}
as $\lambda_n \rightarrow +\infty$, where $\tau_n \rightarrow 1^{-}$, $m\left(v_n\right) \leq 2$ for all $n \in \mathbb{N}$.

\textbf{Step 1:} If $\lambda_n \rightarrow +\infty$, then $\{v_n\}$ blows up along any sequence of local maximum points.

\begin{lemma}\label{lem4.7}
    Assume that ($\mathbf{A_1}$) and ($\mathbf{A_2}$) hold. Let $\left\{v_n\right\} \subset H_0^1(D)$ be positive solutions to \eqref{eq4.17} with $\lambda_n \rightarrow +\infty$. Let $P_n$ be a local maximum point for $v_n$. Then there exists some $C > 0$ independent of $n$ such that
    \begin{equation}\label{eq4.18}
        v_n\left(P_n\right) \geq C \lambda_n^{\frac{1}{q-2}}.
    \end{equation}
\end{lemma}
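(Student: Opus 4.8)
The plan is to exploit the fact that $P_n$ is an interior local maximum of the smooth function $v_n$, so that the elliptic equation \eqref{eq4.17}, evaluated at $P_n$, bounds $v_n(P_n)$ from below in terms of $\lambda_n$. Since $v_n>0$ in $D$ and $v_n=0$ on $\partial D$, every local maximum of $v_n$ lies in the interior; moreover, because $f\in C^1$ and the coefficient $|x|^\mu$ is smooth on the annulus $D$ (which is bounded away from the origin), elliptic bootstrapping gives $v_n\in C^2(D)$. At an interior maximum the Hessian is negative semidefinite, hence $-\Delta v_n(P_n)\ge 0$.

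First I would evaluate \eqref{eq4.17} at $x=P_n$ and discard the nonnegative term $-\Delta v_n(P_n)$ to obtain $\tau_n|P_n|^\mu f(v_n(P_n))\ge \lambda_n|P_n|^\mu v_n(P_n)$. Dividing by $|P_n|^\mu v_n(P_n)>0$ and using $\tau_n\le 1$ yields the key pointwise inequality
\[
\frac{f(v_n(P_n))}{v_n(P_n)}\ge \lambda_n.
\]
Next I would show $v_n(P_n)\to+\infty$: by ($\mathbf{A_1}$) the quotient $s\mapsto f(s)/s$ extends continuously to $[0,+\infty)$ with value $0$ at $s=0$, so it is bounded on every bounded interval; were $\{v_n(P_n)\}$ bounded along a subsequence, the left-hand side above would remain bounded, contradicting $\lambda_n\to+\infty$. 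Finally, since $v_n(P_n)\to+\infty$, assumption ($\mathbf{A_2}$) gives $f(s)/s^{q-1}\to a_0$, so for $n$ large $f(v_n(P_n))/v_n(P_n)\le 2a_0\,v_n(P_n)^{q-2}$; combining this with the key inequality gives $\lambda_n\le 2a_0\,v_n(P_n)^{q-2}$, that is $v_n(P_n)\ge (2a_0)^{-1/(q-2)}\lambda_n^{1/(q-2)}$, which is exactly \eqref{eq4.18}.

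This is essentially a one-point maximum-principle estimate, so there is no serious analytic obstacle. The only points requiring care are justifying that $P_n$ is an interior maximum at which $-\Delta v_n(P_n)\ge 0$ (needing the interior $C^2$ regularity noted above) and checking that the constant $C=(2a_0)^{-1/(q-2)}$ can be taken independent of $n$; the latter is immediate, since the asymptotic bound from ($\mathbf{A_2}$) applies uniformly once $v_n(P_n)$ exceeds a fixed threshold, which holds for all large $n$ because $v_n(P_n)\to+\infty$. I expect the genuine difficulty to appear later in the blow-up analysis, where this lower bound must be combined with the Morse-index constraint $m(v_n)\le 2$ and a rescaling argument to reach a contradiction and thereby conclude that $\{\lambda_n\}$ remains bounded.
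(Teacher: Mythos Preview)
Your proof is correct and follows essentially the same maximum-principle argument as the paper: evaluate the equation at the interior local maximum $P_n$, use $-\Delta v_n(P_n)\ge 0$ to obtain $\lambda_n\le f(v_n(P_n))/v_n(P_n)$, and then invoke the growth of $f$ from ($\mathbf{A_1}$)--($\mathbf{A_2}$). Your handling of the intermediate step $v_n(P_n)\to+\infty$ is in fact slightly cleaner than the paper's, which unnecessarily passes through the mass constraint $v_n\in\mathcal{S}_c$ (an argument that, taken literally, would require $P_n$ to be a \emph{global} maximum).
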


\begin{proof}
    By elliptic regularity arguments, we can deduce that $v_n \in C^2(D)$. From ($\mathbf{A_1}$) and ($\mathbf{A_2}$), there exists some $C > 0$ such that
    \begin{equation*}
        f(s) \leq C (|s| + |s|^{q-1}), \quad \forall s \in \mathbb{R}.
    \end{equation*}
    Using the fact that $P_n$ is a local maximum point for $v_n$, we have $\Delta v_n\left(P_n\right) \leq 0$. So by \eqref{eq4.17}, we have
    \begin{equation*}
        \lambda_n\left|P_n\right|^\mu v_n\left(P_n\right) \leq \tau_n\left|P_n\right|^\mu f\left(v_n\left(P_n\right)\right),
    \end{equation*}
    i.e. 
    \begin{equation*}
        \lambda_n v_n\left(P_n\right) \leq f\left(v_n\left(P_n\right)\right) \leq C(v_n\left(P_n\right) + \left(v_n\left(P_n\right)\right)^{q-1}),
    \end{equation*}
    Assume that $v_n\left(P_n\right) < 1$, then we have $\lambda_n v_n\left(P_n\right) \leq C$ for all $n$. Therefore $v_n\left(P_n\right) \rightarrow 0$. This implies that $v_n \rightarrow 0$ uniformly on $D$, contradicting the fact that $v_n \in {\S}_c$. So $v_n\left(P_n\right) \geq 1$. Therefore,
    \begin{equation*}
        \lambda_n v_n\left(P_n\right) \leq C \left(v_n\left(P_n\right)\right)^{q-1},
    \end{equation*}
    which implies \eqref{eq4.18}.
\end{proof}

\textbf{Step 2:} We deduce a precise behavior of the sequence $\left\{v_n\right\}$ near the local maximum points $P_n$ of $v_n$ as $\lambda_n \rightarrow+\infty$.

\begin{lemma}\label{lem4.8}
    Let $\lambda_n \rightarrow +\infty$ as $n \rightarrow+\infty$ and $P_n \in D$ be such that, for some $R_n \rightarrow+\infty$,
    \begin{equation*}
        v_n\left(P_n\right)=\max_{D \cap B_{R_n \tilde{\varepsilon}_n}\left(P_n\right)} v_n, \quad \text { where } \tilde{\varepsilon}_n=\left(v_n\left(P_n\right)\right)^{-\frac{q-2}{2}} \rightarrow 0.
    \end{equation*}
    If we set the rescaled function
    \begin{equation*}
        V_n(y)=\varepsilon_n^{\frac{2}{q-2}} v_n\left(\varepsilon_n y+ P_n\right) \quad \text { for } y \in D_n=\frac{D-P_n}{\varepsilon_n }, \quad \text { with } \varepsilon_n=\lambda_n^{-\frac{1}{2}}.
    \end{equation*}

    Then, up to a subsequence, we have
    \begin{enumerate}
        \item[(i)] $\varepsilon_n(\operatorname{dist}\left(P_n, \partial D\right))^{-1} \rightarrow 0$ as $n \rightarrow+\infty$.
        \item[(ii)] $v_n\left(P_n\right) = \max\limits_{D \cap B_{R_n \varepsilon_n}\left(P_n\right)}v_n$ for some $R_n \rightarrow+\infty$.
        \item[(iii)] $V_n \rightarrow V$ in $C_{\mathrm{loc}}^1\left(\mathbb{R}^{M}\right)$ as $n \rightarrow+\infty$, where $V$ solves
                    \begin{equation}\label{eq4.19}
                        \begin{cases}
                            -\Delta V + |P|^{\mu} V = a_0 |P|^{\mu} V^{q-1} & \text { in } \, \mathbb{R}^{M}, \\ 
                            0 < V \leq V(0) & \text { in } \, \mathbb{R}^{M}, \\ 
                            V \rightarrow 0 & \text { as } \, |x| \rightarrow +\infty.
                        \end{cases}
                    \end{equation}
        \item[(iv)]  There exists $\phi_n \in C_0^{\infty}(D)$ with $\operatorname{supp} \phi_n \subset B_{R \varepsilon_n }\left(P_n\right), R>0$, such that for $n$ large
                    \begin{equation}\label{eq4.20}
                        \int_D\left(\left|\nabla \phi_{n}\right|^2+\lambda_n|x|^\mu \phi_{n}^2-\tau_n|x|^\mu f^{\prime}\left(v_n\right) \phi_{n}^2 \right)\dif x<0 .
                    \end{equation}
        \item[(v)] For all $R>0$ and $q \geq 1$ there holds
                    \begin{equation}\label{eq4.21}
                        \lim _{n \rightarrow+\infty} \lambda_n^{\frac{N}{2}-\frac{q}{q-2}} \int_{B_{R \varepsilon_n }\left(P_n\right)} |x|^{\mu} v_n^q \dif x=\int_{B_R(0)} |P|^{\mu} V^q \dif x.
                    \end{equation}
    \end{enumerate}
\end{lemma}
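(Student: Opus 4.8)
The plan is to run a by-now standard blow-up (concentration) argument built on the rescaling $V_n(y)=\varepsilon_n^{2/(q-2)}v_n(\varepsilon_n y+P_n)$, $\varepsilon_n=\lambda_n^{-1/2}$. First I would record the equation for $V_n$: a change of variables in \eqref{eq4.17}, using the defining relation $\lambda_n\varepsilon_n^2=1$, gives
\begin{equation*}
    -\Delta V_n+|\varepsilon_n y+P_n|^\mu V_n=\tau_n|\varepsilon_n y+P_n|^\mu g_n(V_n)\quad\text{in }D_n,\qquad g_n(t)=\varepsilon_n^{2q/(q-2)}f(\varepsilon_n^{-2/(q-2)}t).
\end{equation*}
Assumption ($\mathbf{A_2}$) forces $g_n(t)\to a_0 t^{q-1}$ locally uniformly on $(0,+\infty)$, and since $D$ is an annulus with $a\le|x|\le b$ the weight is bounded and bounded away from $0$; thus, up to a subsequence, $P_n\to P$ with $|P|\in[a,b]$ and $|\varepsilon_n y+P_n|^\mu\to|P|^\mu>0$. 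This singles out \eqref{eq4.19} as the only possible limiting equation.

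The first genuine point is that the two natural scales agree, i.e. that $V_n(0)=(\varepsilon_n/\tilde{\varepsilon}_n)^{2/(q-2)}$ stays bounded above and below. The lower bound $V_n(0)\ge C>0$ is immediate from Lemma \ref{lem4.7}. For the upper bound I would argue by contradiction: if $V_n(0)\to+\infty$, then rescaling at the intrinsic scale $\tilde{\varepsilon}_n=(v_n(P_n))^{-(q-2)/2}$ produces $\tilde V_n=v_n(P_n)^{-1}v_n(\tilde{\varepsilon}_n\,\cdot+P_n)$ with $\tilde V_n(0)=1$, $0<\tilde V_n\le1$, solving an equation whose linear coefficient is $\lambda_n\tilde{\varepsilon}_n^2=(\tilde{\varepsilon}_n/\varepsilon_n)^2\to0$; passing to the limit would yield a positive bounded solution of $-\Delta V=a_0|P|^\mu V^{q-1}$ on $\mathbb{R}^{M}$, which is excluded by the Gidas--Spruck Liouville theorem since $2<q<\tfrac{2M}{M-2}$. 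Hence $\tilde{\varepsilon}_n\sim\varepsilon_n$ and $V_n(0)$ is bounded. This comparability is exactly what transfers the maximality hypothesis from $B_{R_n\tilde{\varepsilon}_n}(P_n)$ to $B_{R_n\varepsilon_n}(P_n)$ (after replacing $R_n$ by a multiple still tending to $+\infty$), which is statement (ii); it also gives $V_n\le V_n(0)$ on balls of radius $\sim R_n$, the uniform bound needed below.

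Next I would prove the interior statement (i) and the convergence (iii) together. If $\varepsilon_n/\operatorname{dist}(P_n,\partial D)$ did not tend to $0$, then along a subsequence the rescaled domains $D_n$ would converge to a half-space on whose boundary hyperplane $V_n$ vanishes; the limit $V$ would then be a nonnegative solution of \eqref{eq4.19} on that half-space with zero Dirichlet data, nontrivial because $V(0)\ge C>0$ by Lemma \ref{lem4.7}. A Liouville-type nonexistence result for the subcritical problem on a half-space rules this out, proving (i) and hence $D_n\to\mathbb{R}^{M}$. With the uniform local bound in hand, interior elliptic $W^{2,p}$ and Schauder estimates applied to the $V_n$-equation give $C^1_{\mathrm{loc}}(\mathbb{R}^{M})$ compactness; passing to the limit and invoking the uniqueness of the positive solution of \eqref{eq4.19} (the explicit $U_{|P|^\mu,\,a_0|P|^\mu}$ recorded after \eqref{eq2.5}, of Morse index $1$) yields (iii). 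I expect the half-space nonexistence and the careful verification that the blow-up remains interior to be the main obstacle, since the remaining items are routine rescalings.

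Finally, for (iv) I would exploit that the limit $V$ has Morse index $1$. Testing the linearized operator $L=-\Delta+|P|^\mu-a_0(q-1)|P|^\mu V^{q-2}$ with $V$ and using the equation gives $\langle LV,V\rangle=a_0(2-q)|P|^\mu\int_{\mathbb{R}^{M}}V^q<0$ (here $q>2$ is decisive), so a compactly supported cutoff $\Phi$ of $V$ still satisfies $Q(\Phi):=\int(|\nabla\Phi|^2+|P|^\mu\Phi^2-a_0(q-1)|P|^\mu V^{q-2}\Phi^2)<0$. Setting $\phi_n(x)=\Phi((x-P_n)/\varepsilon_n)$, supported in $B_{R\varepsilon_n}(P_n)$, the substitution $x=\varepsilon_n y+P_n$ together with $\lambda_n=\varepsilon_n^{-2}$ and the asymptotics of $f,f'$ turns the left-hand side of \eqref{eq4.20} into $\varepsilon_n^{\,M-2}\,(Q(\Phi)+o(1))$, which is negative for $n$ large. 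Statement (v) is then the cleanest: the same substitution converts $\int_{B_{R\varepsilon_n}(P_n)}|x|^\mu v_n^q\,\dif x$ into $\varepsilon_n^{\,M-2q/(q-2)}\int_{B_R(0)}|\varepsilon_n y+P_n|^\mu V_n^q\,\dif y$, so multiplying by the matching power of $\lambda_n=\varepsilon_n^{-2}$ and using the $C^1_{\mathrm{loc}}$ convergence of $V_n$ together with $|\varepsilon_n y+P_n|^\mu\to|P|^\mu$ gives \eqref{eq4.21}.
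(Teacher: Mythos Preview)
Your overall strategy follows the paper's, but there is a genuine gap at the half-space Liouville step, and a related issue in part (iv).

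For (i), you invoke ``a Liouville-type nonexistence result for the subcritical problem on a half-space'' to exclude a half-space limit for $-\Delta V+|P|^\mu V=a_0|P|^\mu V^{q-1}$ with zero Dirichlet data. For this equation (with the absorption term present) there is no unconditional Liouville theorem for merely bounded positive solutions; one needs either finite energy or finite Morse index. The paper's route is to first \emph{transfer} the bound $m(v_n)\le 2$ (which is part of the standing hypotheses of the blow-up section) to the limit: assuming by contradiction that $m(\tilde V)\ge 3$, one rescales three test functions back to $D$ and uses assumption ($\mathbf{A_3}$), which gives only the one-sided estimate $f'(s)\ge a_1 s^{q-2}$ with $a_1>a_0(q-1)$, to show the rescaled quadratic forms are eventually negative, contradicting $m(v_n)\le 2$. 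With $m(\tilde V)\le 2$ in hand, the paper then applies \cite{Bahri1992} to exclude $\tilde\lambda=0$ and \cite[Theorem~1.1]{Esposito2011} (a Morse-index Liouville theorem) to exclude the half-space. As written, your argument never uses the Morse-index hypothesis or ($\mathbf{A_3}$) in this step, so the half-space exclusion is unjustified. A minor related point: in your contradiction argument for the upper bound on $V_n(0)$, the rescaled domain at scale $\tilde\varepsilon_n$ could also converge to a half-space, not only to $\mathbb{R}^M$; Gidas--Spruck does handle both cases for the pure Lane--Emden equation, but you should say so.

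The same difficulty reappears in (iv). You claim that ``the asymptotics of $f,f'$'' turn the left-hand side of \eqref{eq4.20} into $\varepsilon_n^{\,M-2}(Q(\Phi)+o(1))$, but ($\mathbf{A_2}$) only gives $f(s)/s^{q-1}\to a_0$, not $f'(s)/s^{q-2}\to a_0(q-1)$. The paper does not prove convergence of the $f'$-term at all; instead it splits the integral according to whether $v_n$ is large and uses the one-sided bound from ($\mathbf{A_3}$) to show that the $\limsup$ of the rescaled quadratic form is at most $\int(|\nabla\Phi|^2+|P|^\mu\Phi^2-a_1|P|^\mu V^{q-2}\Phi^2)$, which in turn is $\le Q(\Phi)<0$ since $a_1>a_0(q-1)$. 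Your computation for (iv) should be adjusted accordingly. Part (v) and the scale comparison $\tilde\varepsilon_n\sim\varepsilon_n$ are handled correctly.
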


\begin{proof}
    We first define a rescaled function
    \begin{equation*}
        \tilde{V}_n(y):=\tilde{\varepsilon}_n^{\frac{2}{q-2}} v_n\left(\tilde{\varepsilon}_n y + P_n\right) \text{ for } y \in \tilde{D}_n:=\frac{D-P_n}{\tilde{\varepsilon}_n}.
    \end{equation*}
    By a direct calculation we can derive that $\tilde{V}_n$ solves
    \begin{equation*}
        \begin{cases}
            -\Delta \tilde{V}_n(y) = \tau_n\left|\tilde{\varepsilon}_n y+P_n\right|^\mu \frac{f(\tilde{\varepsilon}_n^{-\frac{2}{q-2}} \tilde{V}_n(y))}{\tilde{\varepsilon}_n^{-\frac{2(q-1)}{q-2}} \tilde{V}_n^{q-1}(y)} \tilde{V}_n^{q-1}(y) -\tilde{\varepsilon}_n^2 \lambda_n\left|\tilde{\varepsilon}_n y+P_n\right|^\mu \tilde{V}_n(y) & \text { in } \tilde{D}_n, \\ 
            0< \tilde{V}_n \leq \tilde{V}_n(0)=1 & \text { in } \tilde{D}_n \cap B_{R_n}(0), \\ 
            \tilde{V}_n=0 & \text { on } \partial \tilde{D}_n.
        \end{cases}
    \end{equation*}
    Let $y = 0$ we have
    \begin{equation*}
        \tau_n \frac{f(\tilde{\varepsilon}_n^{-\frac{2}{q-2}})}{\tilde{\varepsilon}_n^{-\frac{2(q-1)}{q-2}}} |P_n|^\mu - \lambda_n \tilde{\varepsilon}_n^2 |P_n|^\mu = -\Delta \tilde{V}_n(0) \geq 0.
    \end{equation*}
    Using ($\mathbf{A_2}$) it follows that there exists $\tilde{\lambda} \in [0, a_0]$ such that up to subsequence, $\lambda_n \tilde{\varepsilon}_n^2 \rightarrow \tilde{\lambda} $ as $n \rightarrow+\infty$. In addition, we may suppose that $P_n \rightarrow P \in \bar{D}$. 

    Now if we denote $d_n = \operatorname{dist}\left(P_n, \partial D\right)$ and assume that 
    \begin{equation*}
        \frac{\tilde{\varepsilon}_n}{d_n} \rightarrow L \in[0,+\infty],
    \end{equation*}
    then using the elliptic regularity theory, we can deduce that, up to subsequence, $\tilde{V}_n \rightarrow \tilde{V}$ in $C_{\mathrm{loc}}^1(\bar{H})$, where $\tilde{V}$ solves the problem
    \begin{equation}\label{eq4.22}
        \begin{cases}
            -\Delta \tilde{V}+\tilde{\lambda}|P|^\mu \tilde{V}=a_0 |P|^\mu \tilde{V}^{q-1} & \text { in } H, \\ 
            0< \tilde{V} \leq \tilde{V}(0)=1 & \text { in } H, \\ 
            \tilde{V}=0 & \text { on } \partial H.
        \end{cases}
    \end{equation}
    and $H=\mathbb{R}^{M}$ if $L=0$, $H$ represents a half-space with $0 \in \bar{H}$ and $\operatorname{dist}(0, \partial H)=\frac{1}{L}$ if $L>0$. 

    To deduce the precise type of $H$, we need to use the Morse index information of $\tilde{V}$. Now we prove that the Morse index of $\tilde{V}$ is at most 2. By contradiction, suppose that $m(\tilde{V}) \geq 3$. Then there exist $\phi_1, \phi_2, \phi_3 \in C_0^{\infty}(H)$ orthogonal in $L^2(H)$ such that the quadratic form
    \begin{equation*}
        Q\left(\phi_i, \bar{V}\right):=\int_H \left(\left|\nabla \phi_i\right|^2+\tilde{\lambda}|P|^\mu \phi_i^2-a_0 |P|^\mu(q-1) \tilde{V}^{q-2} \phi_i^2 \right) \dif x<0.
    \end{equation*}
    Notice that if we introduce
    \begin{equation*}
        \phi_{i, n}(x):=\tilde{\varepsilon}_n^{-\frac{M-2}{2}} \phi_i\left(\frac{x-P_n}{\tilde{\varepsilon}_n}\right)
    \end{equation*}
    for $i= 1, 2, 3$, then $\phi_{1,n}, \phi_{2,n},\phi_{3,n}$ are orthogonal in $L^2(D)$. 
    Moreover, we define
    \begin{equation*}
        \tilde{D}_{n,R} :=\left\{y \in \tilde{D}_n : |\tilde{\varepsilon}_n^{-\frac{2}{q-2}} \tilde{V}_n(y)| < R \right\}, \quad \tilde{D}_{n,R}^c := \tilde{D}_n \backslash \tilde{D}_{n,R}.
    \end{equation*}
    where $R > 0$ is defined as in ($\mathbf{A_3}$). Then we have
    \begin{equation*}
        \begin{aligned}
            & \int_{\tilde{D}} |x|^\mu f^{\prime}(v_n(x)) \phi_{i,n}^2(x) \dif x \\
             = & \int_{\tilde{D}_n} |\tilde{\varepsilon}_n y + P_n|^\mu f^{\prime}(\tilde{\varepsilon}_n^{-\frac{2}{q-2}} \tilde{V}_n(y)) \tilde{\varepsilon}_n^2 \phi_i^2(y) \dif y \\
             = &  \int_{\tilde{D}_{n,R}} |\tilde{\varepsilon}_n y + P_n|^\mu f^{\prime}(\tilde{\varepsilon}_n^{-\frac{2}{q-2}} \tilde{V}_n(y)) \tilde{\varepsilon}_n^2 \phi_i^2(y) \dif y + \int_{\tilde{D}_{n,R}^c} |\tilde{\varepsilon}_n y + P_n|^\mu f^{\prime}(\tilde{\varepsilon}_n^{-\frac{2}{q-2}} \tilde{V}_n(y)) \tilde{\varepsilon}_n^2 \phi_i^2(y) \dif y \\
             := &  I_{1,n} + I_{2,n}.
        \end{aligned}
    \end{equation*}
    For $I_{1,n}$, by the definition of $\tilde{D}_{n,R}$ and the boundedness of $f^{\prime}$ on bounded sets, we have
    \begin{equation*}
        |I_{1,n}| \leq C \tilde{\varepsilon}_n^2 \int_{\tilde{D}_{n,R}} \phi_i^2(y) \dif y \leq C \tilde{\varepsilon}_n^2 \rightarrow 0 \text { as } n \rightarrow+\infty.
    \end{equation*}
    For $I_{2,n}$, by ($\mathbf{A_3}$) we have
    \begin{equation*}
        \begin{aligned}
            I_{2,n} & =\int_{\tilde{D}_{n,R}^c} |\tilde{\varepsilon}_n y + P_n|^\mu \frac{f^{\prime}(\tilde{\varepsilon}_n^{-\frac{2}{q-2}} \tilde{V}_n(y))}{\tilde{\varepsilon}_n^{-\frac{2(q-2)}{q-2}} \tilde{V}_n^{q-2}(y)} \tilde{V}_n^{q-2}(y) \phi_i^2(y) \dif y \\
            & \geq \int_{\tilde{D}_{n,R}^c} |\tilde{\varepsilon}_n y + P_n|^\mu a_1 \tilde{V}_n^{q-2}(y) \phi_i^2(y) \dif y \\
            & \rightarrow \int_H |P|^\mu a_1 \tilde{V}^{q-2}(y) \phi_i^2(y) \dif y \text { as } n \rightarrow+\infty.
        \end{aligned}
    \end{equation*}
    Thus we have
    \begin{equation*}
        \begin{aligned}
            Q\left(\phi_{i, n}, v_n\right) & :=\int_D\left(\left|\nabla \phi_{i, n}\right|^2+\lambda_n|x|^\mu \phi_{i, n}^2-\tau_n|x|^\mu f^{\prime}\left(v_n\right) \phi_{i, n}^2 \right)\dif x \\
            & =\int_D \left(\tilde{\varepsilon}_n^{-M}\left|\nabla \phi_i\left(\frac{x-P_n}{\tilde{\varepsilon}_n}\right)\right|^2+\lambda_n|x|^\mu \tilde{\varepsilon}_n^{-(M-2)} \phi_i^2\left(\frac{x-P_n}{\tilde{\varepsilon}_n}\right) \right)\dif x -\tau_n \int_D |x|^\mu f^{\prime}(v_n(x)) \phi_{i,n}^2(x) \dif x \\
            & = \int_{\tilde{D}_n}\left(\left|\nabla \phi_i(y)\right|^2+\lambda_n\left|\tilde{\varepsilon}_n y+P_n\right|^\mu \tilde{\varepsilon}_n^2 \phi_i^2(y)\right) \dif y- \tau_n\left|\tilde{\varepsilon}_n y+P_n\right|^\mu \frac{f^{\prime}\left(\tilde{\varepsilon}_n^{-\frac{2}{q-2}} \tilde{V}_n(y)\right)}{\tilde{\varepsilon}_n^{-\frac{2(q-2)}{q-2}} \tilde{V}^{q-2}_n(y)} \tilde{V}_n^{q-2}(y) \phi_i^2(y) \dif y \\
            & \rightarrow \int_H \left(\left|\nabla \phi_i\right|^2+\tilde{\lambda}|P|^\mu \phi_i^2-a_1 |P|^\mu \tilde{V}^{q-2} \phi_i^2\right) \dif y \\
            & \leq \int_H \left(\left|\nabla \phi_i\right|^2+\tilde{\lambda}|P|^\mu \phi_i^2 - a_0 |P|^\mu (q-1) \tilde{V}^{q-2} \phi_i^2\right) \dif y \\
            & <0.
        \end{aligned}
    \end{equation*}
    This contradicts the fact that $m(v_n) \leq 2$. 
    
    Since $N \geq 2$, the case $\tilde{\lambda} = 0$ is excluded in both cases of $H$ by the Liouville-type theorem stated in \cite{Bahri1992}, so we have $\tilde{\lambda} > 0$. Now we may use another Liouville-type theorem in \cite[Theorem 1.1]{Esposito2011} to conclude that $H = \mathbb{R}^{M}$. So we have
    \begin{equation*}
        \left(\frac{\tilde{\varepsilon}_n}{\varepsilon_n}\right)^2=\lambda_n \tilde{\varepsilon}_n^2 \rightarrow \tilde{\lambda} \in (0,a_0] \text { as } n \rightarrow+\infty.
    \end{equation*}
    Now we analyze the function $V_n$. By a similar calculation as $\tilde{V}_n$, we can deduce that $V_n$ satisfies
    \begin{equation*}
        \begin{cases}
            -\Delta V_n(y) = \tau_n\left|\varepsilon_n y+P_n\right|^\mu \frac{f(\varepsilon_n^{-\frac{2}{q-2}} V_n(y))}{\varepsilon_n^{-\frac{2(q-1)}{q-2}} V_n^{q-1}(y)} V_n^{q-1}(y) - \left|\varepsilon_n y+P_n\right|^\mu V_n(y) & \text { in } D_n, \\ 
            0< V_n \leq V_n(0)=\left(\frac{\tilde{\varepsilon}_n}{\varepsilon_n}\right)^{-\frac{2}{q-2}} & \text { in } D_n \cap B_{R_n \tilde{\varepsilon}_n / {\varepsilon_n}}(0), \\ 
            V_n=0 & \text { on } \partial D_n.
        \end{cases}
    \end{equation*}
    where $D_n=\frac{D-P_n}{\varepsilon_n}$ and $R_n \rightarrow+\infty$. By the same argument as $\tilde{V}_n$, we have $V_n \rightarrow V$ in $C_{\mathrm{loc}}^1(\bar{H})$ as $n \rightarrow+\infty$, where $V$ solves
    \begin{equation}\label{eq4.23}
        \begin{cases}
            -\Delta V+ |P|^\mu V=a_0|P|^\mu V^{q-1} & \text { in } H, \\ 
            0< V \leq V(0) & \text { in } H, \\ 
            V=0 & \text { on } \partial H.
        \end{cases}
    \end{equation}
    Moreover, using a similar argument as $\tilde{\varepsilon}_n, \tilde{V}_n, \tilde{V}$ before, we have $H = \mathbb{R}^{M}$ and $m(V) \leq \sup\limits_n m\left(v_n\right) \leq 2$, so 
    \begin{equation*}
        \frac{\varepsilon_n}{d_n} \rightarrow 0,
    \end{equation*}
    which proves (i) - (iii) of the lemma.  By \cite[Theorem 2.3]{Esposito2011} we have $V \rightarrow 0$ as $|x| \rightarrow +\infty$ and $V \in H^1(\mathbb{R}^{M})$. Therefore, by the well-known result of the uniqueness of the solution to \eqref{eq4.23} satisfying the above properties, we can deduce that $m(V)= 1$. As a consequence, there exists $\phi \in C_0^{\infty}\left(\mathbb{R}^{M}\right)$ with $\operatorname{supp} \phi \subset B_R(0)$ for some $R>0$, such that
    \begin{equation*}
        \int_{\mathbb{R}^{M}} \left|\nabla \phi\right|^2+ |P|^\mu \phi^2-a_0 |P|^\mu(q-1) V^{q-2} \phi^2 \dif x<0.
    \end{equation*}
    If we choose the function
    \begin{equation}
        \phi_n(x):=\varepsilon_n^{-\frac{M-2}{2}} \phi\left(\frac{x-P_n}{\varepsilon_n}\right),
    \end{equation}
    using similar argument as $\phi_{i, n}(x)$ we can deduce that \eqref{eq4.19} holds for $n$ large enough, which proves the assertion (iv). 
    
    Finally, it suffices to prove \eqref{eq4.21}. By a change of variable we have
    \begin{equation*}
        \begin{aligned}
        & \lim _{n \rightarrow \infty} \lambda_n^{M-\frac{q}{q-2}} \int _{B_{R \varepsilon_n\left(P_n\right)}} |x|^{\mu} v_n^q \dif x \\
        = & \lim _{n \rightarrow \infty} \lambda_n^{\frac{M}{2}-\frac{q}{q-2}} \int_{B_{R \varepsilon_n\left(P_n\right)}} \varepsilon_n^{-\frac{2 q}{q-2}} |x|^{\mu} V_n^q\left(\frac{x-P_n}{\varepsilon_n}\right) \dif x \\
        = & \lim _{n \rightarrow \infty} \lambda_n^{\frac{M}{2}-\frac{q}{q-2}} \int_{B_R(0)} \varepsilon_n^{M-\frac{2 q}{q-2}} |\varepsilon_n x +P_n|^{\mu} V_n^q(x) \dif x \\
        = & \lim _{n \rightarrow \infty} \int_{B_R(0)} |\varepsilon_n x +P_n|^{\mu} V_n^q(x) \dif x=\int_{B_R(0)} |P|^{\mu} V^q \dif x.
        \end{aligned}
    \end{equation*}
    The last step follows from the facts that $V_n \rightarrow V$ in $L^q_{\mathrm{loc}}(\mathbb{R}^{M})$ by the Sobolev compact embedding, and the dominated convergence theorem. The proof is complete.
\end{proof}

\textbf{Step 3:} We establish a global behavior of the sequence $\{v_n\}$.

\begin{lemma}\label{lem4.9}
    Assume that $\lambda_n \rightarrow+\infty$ as $n \rightarrow+\infty$. Then there exist at most two sequences of points $\left\{P_n^i\right\}, i=\{1, 2\}$, such that
    \begin{equation}\label{eq4.25}
        \lambda_n \operatorname{dist}\left(P_n^i, \partial D\right)^2 \rightarrow+\infty
    \end{equation}
    as $n \rightarrow+\infty$ and
    \begin{equation}\label{eq4.26}
        v_n\left(P_n^i\right)=\max _{D \cap B_{R_n \lambda_n^{-1/2}}\left(P_n^i\right)} v_n
    \end{equation}
    for some $R_n \rightarrow+\infty$ as $n \rightarrow+\infty$. If there exist two sequences of points $\left\{P_n^i\right\}, i=\{1, 2\}$ satisfying \eqref{eq4.25}-\eqref{eq4.27}, then we also have
    \begin{equation}\label{eq4.27}
        \lambda_n\left|P_n^1-P_n^2\right|^2 \rightarrow+\infty.
    \end{equation}
    Moreover, we have
    \begin{equation}\label{eq4.28}
        \lim _{R \rightarrow+\infty}\left(\limsup _{n \rightarrow+\infty}\left[\lambda_n^{-\frac{1}{q-2}} \max _{\left\{d_n(x) \geq R \varepsilon_n\right\}} v_n(x)\right]\right)=0,
    \end{equation}
    where $d_n(x)=\min \left\{\left|x-P_n^i\right|, i=1,2 \right\}$ is the distance function from the sequences $\left\{P_n^i\right\}$. 
\end{lemma}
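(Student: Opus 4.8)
\emph{The plan.} My strategy is the standard bubbling analysis in which the uniform Morse index bound $m(v_n)\le 2$ caps the number of concentration points. The engine is Lemma \ref{lem4.8}: at any sequence of local maxima $P_n$ satisfying the growing-ball condition \eqref{eq4.26}, the rescaled solution converges to the unique radial ground state $V$ of \eqref{eq4.23} with $m(V)=1$, it produces (via part (iv)) a test function $\phi_n\in C_0^{\infty}(D)$ with $\operatorname{supp}\phi_n\subset B_{R\varepsilon_n}(P_n)$ for which
\begin{equation*}
    Q(\phi_n,v_n)=\int_D\left(|\nabla\phi_n|^2+\lambda_n|x|^\mu\phi_n^2-\tau_n|x|^\mu f'(v_n)\phi_n^2\right)\dif x<0,
\end{equation*}
and it yields \eqref{eq4.25} for free through part (i). Since $v_n$ solves \eqref{eq4.17}, $Q(\cdot,v_n)$ is exactly its second variation as an unconstrained critical point, so the number of $L^2(D)$-orthogonal directions on which $Q$ is negative is bounded by $m(v_n)\le 2$. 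I would organize the proof around this observation.

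\emph{Separation, \eqref{eq4.27}.} First I would show that any two sequences $\{P_n^1\},\{P_n^2\}$ obeying \eqref{eq4.25}--\eqref{eq4.26} either coincide for large $n$ or satisfy \eqref{eq4.27}. Suppose along a subsequence $|P_n^1-P_n^2|/\varepsilon_n\to\ell\in[0,+\infty)$. Rescaling about $P_n^1$ as in Lemma \ref{lem4.8}, the image $y_n=(P_n^2-P_n^1)/\varepsilon_n\to y_\infty$ with $|y_\infty|=\ell$, and by \eqref{eq4.26} the point $y_n$ is a local maximum of $V_n$ over a ball of radius $R_n\to+\infty$; passing to the limit, $y_\infty$ is a local maximum of $V$. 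If $\ell>0$ this gives $V$ a second critical point away from the origin, contradicting that the ground state is radially decreasing; if $\ell=0$, the nondegeneracy of the maximum of $V$ at the origin forces $P_n^1=P_n^2$ for large $n$. Hence distinct sequences separate, which is \eqref{eq4.27}.

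\emph{At most two points.} Suppose, for contradiction, there were three sequences $P_n^1,P_n^2,P_n^3$ each satisfying \eqref{eq4.25}--\eqref{eq4.26}. By the previous step they are pairwise well separated, $|P_n^i-P_n^j|/\varepsilon_n\to+\infty$, so for $n$ large the balls $B_{R\varepsilon_n}(P_n^i)$ are pairwise disjoint. The three functions $\phi_n^i$ from Lemma \ref{lem4.8}(iv) then have disjoint supports, are $L^2(D)$-orthogonal, and for every $\varphi=\sum_i a_i\phi_n^i\neq 0$ the cross terms vanish pointwise, giving $Q(\varphi,v_n)=\sum_i a_i^2\,Q(\phi_n^i,v_n)<0$. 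Thus $Q(\cdot,v_n)$ is negative definite on a three-dimensional subspace, so $m(v_n)\ge 3$, a contradiction. Therefore at most two such sequences exist.

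\emph{The decay estimate \eqref{eq4.28}, and the main obstacle.} Let $P_n^1$ be a global maximum of $v_n$; Lemma \ref{lem4.7} gives $v_n(P_n^1)\sim\lambda_n^{1/(q-2)}$ and Lemma \ref{lem4.8} applies. Arguing by contradiction, I assume \eqref{eq4.28} fails with $d_n$ built from the (at most two) points already found. The delicate point—and the main obstacle—is to extract from this failure a genuine new local maximum satisfying the growing-ball hypothesis \eqref{eq4.26}, rather than a mere point where $v_n$ is large. I would handle this with a scale-invariant selection: let $\bar P_n$ maximize $x\mapsto d_n(x)^{2/(q-2)}v_n(x)$ over $\overline D$. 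Failure of \eqref{eq4.28} forces $d_n(\bar P_n)^{2/(q-2)}v_n(\bar P_n)\ge\delta>0$ together with $d_n(\bar P_n)/\varepsilon_n\to+\infty$; maximality then gives $v_n(x)\le C\,v_n(\bar P_n)$ for $|x-\bar P_n|\le\tfrac12 d_n(\bar P_n)$, so choosing $P_n^3$ to be the maximum of $v_n$ on this half-ball produces a local maximum over a ball of radius comparable to $d_n(\bar P_n)\gg\varepsilon_n$, i.e. \eqref{eq4.26} holds with some $R_n\to+\infty$, while $d_n(P_n^3)\ge\tfrac12 d_n(\bar P_n)$ ensures separation from the existing peaks. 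This is a third well-separated concentration point, impossible by the previous step, so \eqref{eq4.28} holds. The remaining verifications—\eqref{eq4.25} (immediate from Lemma \ref{lem4.8}(i)) and the disjointness bookkeeping—are routine; the crux is the scale-invariant selection guaranteeing the local-maximum property needed to invoke Lemma \ref{lem4.8}.
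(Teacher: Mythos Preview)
Your overall strategy matches the paper's: iteratively extract concentration points and use the Morse-index bound $m(v_n)\le 2$ together with Lemma~\ref{lem4.8}(iv) to cap their number. The paper, however, organizes the extraction differently and more directly. Having found $P_n^1,\dots,P_n^k$, it takes $P_n^{k+1}$ to be a point where $v_n$ attains its maximum over the complement $D\setminus\bigcup_j B_{R\varepsilon_n}(P_n^j)$, then uses the decay of the limiting profile $V$ together with the size bound $\varepsilon_n^{2/(q-2)}v_n(P_n^{k+1})\ge 2\delta$ to force $|P_n^{k+1}-P_n^j|/\varepsilon_n\to\infty$. This separation immediately yields the growing-ball condition: setting $\hat\varepsilon_n=v_n(P_n^{k+1})^{-(q-2)/2}$ and $\hat R_n=\tfrac12\min_j|P_n^{k+1}-P_n^j|/\hat\varepsilon_n\to\infty$, one checks that $B_{\hat R_n\hat\varepsilon_n}(P_n^{k+1})$ lies inside the complement, so $P_n^{k+1}$ is automatically the maximum there and Lemma~\ref{lem4.8} applies verbatim.

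Your scale-invariant selection is a genuine alternative, but as written it has a gap at precisely the step you flag as delicate. After choosing $\bar P_n$ to maximize $d_n(x)^{2/(q-2)}v_n(x)$, you let $P_n^3$ maximize $v_n$ over $B_{d_n(\bar P_n)/2}(\bar P_n)$. The maximality of $\bar P_n$ gives only $v_n\le C\,v_n(\bar P_n)\le C\,v_n(P_n^3)$ on such balls, not $v_n\le v_n(P_n^3)$; and nothing prevents $P_n^3$ from lying on $\partial B_{d_n(\bar P_n)/2}(\bar P_n)$, so you have not shown that $P_n^3$ is the exact maximum of $v_n$ over any ball of growing radius \emph{centered at $P_n^3$}. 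That exact-maximum hypothesis is what Lemma~\ref{lem4.8} uses to pin the rescaled limit as the ground state $V$ with $V(0)=\max V$. You can repair this either by reproving the blow-up lemma under the weaker hypothesis $v_n\le C\,v_n(P_n^3)$ on a growing ball (the limit is then still a nontrivial positive solution of \eqref{eq4.19} with finite Morse index, which suffices for producing the negative direction in (iv)), or---much more simply---by switching to the paper's max-over-complement selection, which delivers the exact local-maximum property for free.

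A minor remark on your separation step: concluding $P_n^1=P_n^2$ from $\ell=0$ via nondegeneracy of the maximum of $V$ requires $C^2_{\mathrm{loc}}$ convergence of $V_n$, not merely the $C^1_{\mathrm{loc}}$ convergence stated in Lemma~\ref{lem4.8}(iii); this does follow from elliptic regularity since $f\in C^1$, but should be said. The paper bypasses this entirely: its separation argument uses only that the newly selected peak has height at least $2\delta\,\lambda_n^{1/(q-2)}$ while $V\le\delta$ outside $B_R$.
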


\begin{proof} 
    First we choose $P_n^1$ such that $v_n\left(P_n^1\right)=\max \limits_D v_n$. If \eqref{eq4.28} is satisfied for $P_n^1$, using Lemma \ref{lem4.8} we can deduce that $P_n^1$ satisfies \eqref{eq4.25} and \eqref{eq4.26}, which means the claim holds for one sequence. Otherwise, if $P_n^1$ does not satisfy \eqref{eq4.28}, we suppose, for some $\delta>0$,  
    \begin{equation}\label{eq4.29}
        \limsup _{R \rightarrow+\infty}\left(\limsup _{n \rightarrow+\infty}\left[\varepsilon_n^{\frac{2}{q-2}} \max _{\left\{\left|x-P_n^1\right| \geq R \varepsilon_n \right\}} v_n\right]\right)=4 \delta>0.
    \end{equation}
    By Lemma \ref{lem4.8}, up to a subsequence we have
    \begin{equation}\label{eq4.30}
        \varepsilon_n^{\frac{2}{q-2}} v_n\left(\varepsilon_n y+P_n^1\right)=: V_n^1(y) \rightarrow V(y) \text { in } C_{\mathrm{loc}}^1\left(\mathbb{R}^{M}\right)
    \end{equation}
    as $n \rightarrow+\infty$. Since $V \rightarrow 0$ as $|y| \rightarrow+\infty$, we can find $R$ large so that
    \begin{equation}\label{eq4.31}
        V(y) \leq \delta, \quad \forall |y| \geq R.
    \end{equation}
    By \eqref{eq4.29}, up to take $R$ larger and up to a subsequence, we can also assume that
    \begin{equation}\label{eq4.32}
        \varepsilon_n^{\frac{2}{q-2}} \max _{\left\{|x-P_n^1| \geq R \varepsilon_n\right\}} v_n \geq 2 \delta.
    \end{equation}
    Since $v_n=0$ on $\partial D$, so we can find $P_n^2 \in D \backslash B_{R \varepsilon_n}\left(P_n^1\right)$ such that $v_n\left(P_n^2\right)= \max _{D \backslash B_{R \varepsilon_n}\left(P_n^1\right)} v_n$.

    Now we prove that \eqref{eq4.27} holds for $\left\{P_n^1, P_n^2\right\}$, i.e. $\frac{\left|P_n^2-P_n^1\right|}{\varepsilon_n} \rightarrow+\infty$. Suppose on the contrary that $\frac{\left|P_n^2-P_n^1\right|}{\varepsilon_n^1} \rightarrow R^{\prime} \geq R$, by \eqref{eq4.30} and \eqref{eq4.31} we would get
    \begin{equation*}
        \varepsilon_n^{\frac{2}{q-2}} v_n\left(P_n^2\right)=V_n^1\left(\frac{P_n^2-P_n^1}{\varepsilon_n}\right) \rightarrow V\left(R^{\prime}\right) \leq \delta,
    \end{equation*}
    in contradiction with \eqref{eq4.32}. Therefore, \eqref{eq4.27} does hold for $\left\{P_n^1, P_n^2\right\}$. 
    
    Now we set $\hat{\varepsilon}_n=v_n\left(P_n^2\right)^{-\frac{q-2}{2}}$ and $\hat{R}_n=\frac{1}{2} \frac{\left|P_n^2-P_n^1\right|}{\hat{\varepsilon}_n}$. By \eqref{eq4.32} we get $\hat{\varepsilon}_n \leq(2 \delta)^{-\frac{q-2}{2}} \varepsilon_n$, and then
    \begin{equation*}
        \hat{R}_n \geq \frac{(2 \delta)^{\frac{q-2}{2}}}{2} \frac{\left|P_n^2-P_n^1\right|}{\varepsilon_n} \rightarrow+\infty \quad \text { as } n \rightarrow+\infty.
    \end{equation*}
    We claim that
    \begin{equation*}
        v_n\left(P_n^2\right)=\max _{D \cap B_{\hat{R}_n\hat{\varepsilon}_n \left(P_n^2\right)}} v_n.
    \end{equation*}
    Indeed, since $\varepsilon_n<<\left|P_n^2-P_n^1\right|$ by \eqref{eq4.27}, for all $x \in B_{\hat{R}_n\hat{\varepsilon}_n}\left(P_n^2\right)$ we have
    \begin{equation*}
        \left|x-P_n^1\right| \geq\left|P_n^2-P_n^1\right|-\left|x-P_n^2\right| \geq \frac{1}{2}\left|P_n^2-P_n^1\right| \geq R \varepsilon_n
    \end{equation*}
    so $D \cap B_{\hat{R}_n\hat{\varepsilon}_n}\left(P_n^2\right) \subset D \backslash B_{R \varepsilon_n^1}\left(P_n^1\right)$ and the claim holds. Since $\hat{R}_n \rightarrow +\infty$ as $n \rightarrow+\infty$, by Lemma \ref{lem4.8} we also get that \eqref{eq4.25} and \eqref{eq4.26} hold true for $\left\{P_n^1, P_n^2\right\}$. 
    
    Now if \eqref{eq4.29} holds for $\left\{P_n^1, P_n^2\right\}$, we are done. Otherwise, we can iterate the above argument and attain three sequences $\{P_n^1\}$,$\{P_n^2\}$, $\{P_n^3\}$ so that \eqref{eq4.25}-\eqref{eq4.27} hold true, but \eqref{eq4.29} is not satisfied. We will show that this is impossible.

    For the sequence $P_n^i$, $i=1, 2, 3$, Using Lemma \ref{lem4.8}, there exists $\phi_n^i \in C_0^{\infty}(D)$ with $\operatorname{supp} \phi_n^i \subset B_{R \varepsilon_n}\left(P_n^i\right), R>0$, which satisfy 
    \begin{equation*}
        \int_D\left(\left|\nabla \phi_{n}^i \right|^2+\lambda_n|x|^\mu (\phi_{n}^i)^2-\tau_n|x|^\mu f^{\prime}\left(v_n\right) (\phi_{n}^i)^2 \right)\dif x<0 .
    \end{equation*}
    By \eqref{eq4.27}, $\phi_n^1, \ldots, \phi_n^3$ have disjoint compact supports for $n$ large, so we have $3 \leq \lim _{n \rightarrow+\infty} m\left(v_n\right)$, contradicting to $m(v_n) \leq 2$. Therefore, the argument must end after at most two iterations and the proof is complete.
\end{proof}

\textbf{Step 4:} We show the exponential decay of $\{v_n\}$ away from the local maximum points when $n$ is large.

\begin{lemma}\label{lem4.10}
    There exist $\gamma, C>0$ and $n_0 \in \mathbb{N}$ such that for all $n \geq n_0$ we have
    \begin{equation}\label{eq4.33}
        v_n(x) \leq C \lambda_n^{\frac{1}{q-2}} \sum_{i=1}^k e^{-\gamma \lambda_n^{1 / 2}\left|x-P_n^i\right|}, \quad \forall x \in D.
    \end{equation}
\end{lemma}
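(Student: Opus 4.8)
The plan is to establish \eqref{eq4.33} by a comparison argument: away from the blow-up points I will linearise the equation and dominate $v_n$ by an explicit sum of exponentially decaying barriers, then invoke the maximum principle. Write $m_0 := \min_{\overline{D}} |x|^\mu > 0$, which is strictly positive since the annulus $D$ is bounded away from the origin. The decisive preliminary step is to show that, outside a fixed number of scaled balls around the centers, the nonlinear term is absorbed by the linear term $\lambda_n |x|^\mu v_n$. By ($\mathbf{A_1}$) and ($\mathbf{A_2}$) one has $f(s) \le C(s + s^{q-1})$, hence $\tau_n f(v_n)/v_n \le C(1 + v_n^{q-2})$. Given $\eta > 0$, estimate \eqref{eq4.28} yields $R_0>0$ and $n_0$ such that $v_n(x) \le \eta \lambda_n^{1/(q-2)}$ whenever $d_n(x) \ge R_0 \varepsilon_n$ and $n \ge n_0$, so that $v_n^{q-2} \le \eta^{q-2}\lambda_n$ on that set. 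Choosing $\eta$ with $C\eta^{q-2}\le \tfrac14$ and using $\lambda_n \to +\infty$ to absorb the remaining constant into $\tfrac14\lambda_n$, I obtain $\tau_n f(v_n)/v_n \le \tfrac12\lambda_n$ on $\Omega_n := \{x \in D : d_n(x) \ge R_0\varepsilon_n\}$. Rewriting \eqref{eq4.17} as $-\Delta v_n = |x|^\mu v_n\big(\tau_n f(v_n)/v_n - \lambda_n\big)$ and using $v_n>0$, $|x|^\mu \ge m_0$, this gives
\begin{equation*}
    -\Delta v_n + \tfrac12 m_0 \lambda_n v_n \le 0 \quad \text{in } \Omega_n,
\end{equation*}
so $v_n$ is a subsolution of the linear operator $L_n := -\Delta + \tfrac12 m_0\lambda_n$.

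Next I would construct the barrier. For a single center a direct computation for $W(x)=e^{-\gamma\sqrt{\lambda_n}\,|x-P_n^i|}$ gives
\begin{equation*}
    -\Delta W + \tfrac12 m_0\lambda_n W = \Big[\big(\tfrac12 m_0 - \gamma^2\big)\lambda_n + \tfrac{(M-1)\gamma\sqrt{\lambda_n}}{|x-P_n^i|}\Big] W,
\end{equation*}
whose last term is positive on $\Omega_n$. Fixing $\gamma := \tfrac12\sqrt{m_0}$, so that $\tfrac12 m_0 - \gamma^2 = \tfrac14 m_0 > 0$, makes each such $W$ and hence, by linearity, the sum
\begin{equation*}
    w_n(x) := C\,\lambda_n^{1/(q-2)} \sum_{i=1}^k e^{-\gamma\sqrt{\lambda_n}\,|x-P_n^i|}
\end{equation*}
a supersolution of $L_n$ in $\Omega_n$. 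The constant $C$ is fixed by boundary comparison: on $\partial D$ we have $v_n = 0 \le w_n$, while on each sphere $\{|x-P_n^i| = R_0\varepsilon_n\}$, which lies inside $D$ for $n$ large by \eqref{eq4.25}, the scaling identity $v_n(\varepsilon_n y + P_n^i) = \lambda_n^{1/(q-2)}V_n(y)$ and the $C^1_{\mathrm{loc}}$ convergence $V_n \to V$ of Lemma \ref{lem4.8} give $v_n \le C_{R_0}\lambda_n^{1/(q-2)}$, whereas $w_n \ge C\lambda_n^{1/(q-2)}e^{-\gamma R_0}$ there (recall $\sqrt{\lambda_n}\,\varepsilon_n = 1$). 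Hence $v_n \le w_n$ on $\partial\Omega_n$ as soon as $C \ge C_{R_0}e^{\gamma R_0}$, a choice independent of $n$.

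Since $L_n$ has strictly positive zeroth-order coefficient, the weak maximum principle on the bounded set $\Omega_n$ then yields $v_n \le w_n$ throughout $\Omega_n$, which is \eqref{eq4.33} on that region. To cover the complementary region $\{d_n(x) < R_0\varepsilon_n\}$, note that the nearest center $P_n^{i_0}$ satisfies $\sqrt{\lambda_n}\,|x-P_n^{i_0}| < R_0$, so $e^{-\gamma\sqrt{\lambda_n}|x-P_n^{i_0}|} > e^{-\gamma R_0}$; combined with the uniform bound $V_n \le C$ on $B_{R_0}(0)$ from Lemma \ref{lem4.8}, this gives $v_n(x) \le C\lambda_n^{1/(q-2)} \le Ce^{\gamma R_0}\lambda_n^{1/(q-2)}e^{-\gamma\sqrt{\lambda_n}|x-P_n^{i_0}|}$, and enlarging $C$ absorbs this into \eqref{eq4.33}. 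Merging the two regions gives the global estimate, with all constants chosen uniformly in $n$ and the $k \le 2$ centers treated simultaneously through the summed barrier.

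The hard part will be the first paragraph: converting the qualitative decay \eqref{eq4.28} into the pointwise coercivity $\tau_n f(v_n)/v_n \le \tfrac12\lambda_n$ that linearises the problem, since this is exactly where the blow-up analysis of Lemmas \ref{lem4.8}--\ref{lem4.9} is used and where the competition between the growing multiplier $\lambda_n$ and the nonlinearity is resolved. Once this coercivity is in hand, the barrier computation, the boundary matching, and the maximum principle are routine, the only care being the $n$-independence of $\gamma$, $R_0$, $C$ and $n_0$.
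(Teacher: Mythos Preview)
Your proof is correct and follows essentially the same route as the paper's: use \eqref{eq4.28} to make the nonlinearity subordinate to $\tfrac12\lambda_n$ outside the scaled balls, show $v_n$ is a subsolution of a linear operator there, build the exponential barrier, match on $\partial\Omega_n$, and apply the comparison principle, then cover the interior balls by the $C^1_{\mathrm{loc}}$ bound on $V_n$. The only cosmetic difference is that you replace the variable coefficient $\tfrac12\lambda_n|x|^\mu$ by the constant $\tfrac12 m_0\lambda_n$ before building the barrier, whereas the paper keeps the weight and checks $(-\Delta+\tfrac{\lambda_n}{2}|x|^\mu)\phi_n^i\ge 0$ directly; your choice $\gamma=\tfrac12\sqrt{m_0}$ coincides with the paper's $\gamma\le \tfrac12 a^{\mu/2}$ (resp.\ $\tfrac12 b^{\mu/2}$) since $m_0=a^\mu$ or $b^\mu$.
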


\begin{proof}
    By ($\mathbf{A_1}$) and ($\mathbf{A_2}$), there exists $C_1, C_2>0$ such that
    \begin{equation*}
        f(s) \leq C_1 s+ C_2 s^{q-1}, \quad \forall s \geq 0.
    \end{equation*}
    From \eqref{eq4.29}, there exists $R>0$ large enough such that for $n \geq n(R)$ large enough
    \begin{equation}\label{eq4.35}
        \lambda_n^{-\frac{1}{q-2}} \max _{\left\{d_n(x) \geq R \lambda_n^{-1 / 2}\right\}} v_n(x) < \left(\frac{1}{8 C_1 (q-1)}\right)^{\frac{1}{q-2}},
    \end{equation}
    where $d_n(x)=\min \left\{\left|x-P_n^i\right|: i=1,2\right\}$. Let
    \begin{equation*}
        A_n:=\left\{d_n(x) \geq R \lambda_n^{-1 / 2} \right\}
    \end{equation*}
    for $n \geq n(R)$. By \eqref{eq4.35}, for any $x \in A_n$ we have
    \begin{equation*}
            -\Delta v_n+\frac{\lambda_n}{2} |x|^\mu v_n = \tau_n |x|^\mu \left(\frac{f(v_n)}{v_n}-\frac{\lambda_n}{2 \tau_n}\right) v_n \leq \tau_n |x|^\mu \left(C_1 + C_2 v_n^{q-2} - \frac{\lambda_n}{2 \tau_n}\right) v_n \leq 0.
    \end{equation*}
    when $n$ is large enough. Now we introduce the function
    \begin{equation*}
        \phi_n^i(x):=e^{-\gamma \lambda_n^{1 / 2}\left|x-P_n^i\right|},
    \end{equation*}
    where $\gamma >0$ is a constant to be determined later. By direct calculation, for $\mu \geq 0$, we have
    \begin{equation*}
        \begin{aligned}
         \left(-\Delta+\frac{\lambda_n}{2}|x|^\mu \right) \phi_n^i(x) 
         = & \left(-\gamma^2 \lambda_n + \frac{\gamma (N-1) \lambda_n^{1 / 2}}{\left|x-P_n^i\right|} + \frac{\lambda_n}{2} |x|^\mu \right) e^{-\gamma \lambda_n^{1 / 2}\left|x-P_n^i\right|} \\
         = & \lambda_n \left(-\gamma^2 + \frac{\gamma (N-1)}{\lambda_n^{1 / 2} \left|x-P_n^i\right|} + \frac{1}{2} |x|^\mu \right) e^{-\gamma \lambda_n^{1 / 2}\left|x-P_n^i\right|} \\
         \geq & \lambda_n \left(-\gamma^2 + \frac{1}{2} a^\mu \right) e^{-\gamma \lambda_n^{1 / 2}\left|x-P_n^i\right|} \geq 0,
        \end{aligned}
    \end{equation*}
    in $A_n$ for $n$ large enough, provided we choose
    \begin{equation*}
        0<\gamma \leq \frac{1}{2} a^{\mu / 2}.
    \end{equation*}
    The case $\mu<0$ can be treated similarly if we choose $0<\gamma \leq \frac{1}{2} b^{\mu / 2}$. 
    Moreover, by $V(x) \rightarrow 0$ as $|x| \rightarrow+\infty$, it follows that for $R>0$ large enough
    \begin{equation*}
        \left. \left(e^{\gamma R} \phi_n^i(x)-\lambda_n^{-\frac{1}{q-2}} v_n(x)\right) \right\rvert_{\partial B_{R \lambda_n^{-1 / 2}}\left(P_n^i\right)} \rightarrow 1-V(R)>0
    \end{equation*}
    as $n \rightarrow+\infty$. Next, we define
    \begin{equation*}
        \phi_n:=e^{\gamma R} \lambda_n^{\frac{1}{q-2}} \sum_{i=1}^k \phi_n^i.
    \end{equation*}
    Considering the operator $L_n=-\Delta+\lambda_n|x|^\mu$, use the above results we can deduce that
    \begin{equation*}
        L_n\left(\phi_n-v_n\right) \geq 0 \quad \text { in } A_n
    \end{equation*}
    and $\phi_n-v_n \geq 0$ in $\partial A_n \cup \partial D$. Using \eqref{eq4.25}-\eqref{eq4.27}, we obtain
    \begin{equation*}
        \partial A_n=\bigcup_{i=1}^k \partial B_{R \lambda_n^{-1 / 2} }\left(P_n^i\right) \subset D.
    \end{equation*}
    So $\partial A_n \cup \partial D = \partial A_n$. Now we can use the comparison principle to deduce that for any $x \in A_n$,
    \begin{equation*}
        v_n(x) \leq \phi_n(x)=e^{\gamma R} \lambda_n^{\frac{1}{q-2}} \sum_{i=1}^k e^{-\gamma \lambda_n^{1 / 2}\left|x-P_n^i\right|}.
    \end{equation*}
    For $x \in D \backslash A_n$, by $\lambda_n \tilde{\varepsilon}_n^2 \rightarrow \tilde{\lambda}$, there exists $C>0$, such that
    \begin{equation*}
        v_n(x) \leq \max _D v_n= v_n\left(P_n\right)=\left(\tilde{\varepsilon}_n\right)^{-\frac{2}{q-2}} \leq C e^{\gamma R} \lambda_n^{\frac{1}{q-2}} \sum_{i=1}^k e^{-\gamma \lambda_n^{1 / 2} e_n^{-1 / 2}\left|x-P_n^i\right|}
    \end{equation*}
    when $n$ is large enough. Thus, taking a subsequence if necessary, for some $C>0$, \eqref{eq4.33} holds for any $n$ large enough and the proof is complete.
\end{proof}

\textbf{Step 5:} We will use the information obtained in the previous steps to prove the boundedness of the sequence $\{v_n\}$ when the energy level is bounded. 

First using the monotonicity of $\tilde{m}_{\tau_n}$, we get
\begin{equation*}
    E_1\left(w_1\right) \leq E_{\tau_n}\left(w_1\right) \leq \tilde{m}_{\tau_n} \leq \tilde{m}_{\frac{1}{2}},
\end{equation*}
which implies the boundedness of $\tilde{m}_{\tau_n}$. To prove Theorem \ref{thm1.2} (iii), it is crucial to prove the following lemma.

\begin{lemma}\label{lem4.11}
    Let $\left\{v_n\right\} \subset H_0^1(D)$ be a sequence of positive solutions to \eqref{eq4.17} for some $\left\{\lambda_n\right\} \subset \mathbb{R}$ and $\tau_n \rightarrow 1^{-}$. Assume that
    \begin{equation*}
        \int_{D} |x|^\mu v_n^2 \dif x= c^2 \quad \text { and } \quad m\left(v_n\right) \leq 2, \quad \forall n,
    \end{equation*}
    and assume the energy levels $\left\{\tilde{m}_n=E_{\tau_n}\left(v_n\right)\right\}$ is bounded. Then, the sequences $\left\{\lambda_n\right\} \subset$ $\mathbb{R}$ and $\left\{v_n\right\} \subset H_0^1(D)$ must be bounded. In addition, $\left\{v_n\right\} \subset {\S}_c$ is a bounded Palais-Smale sequence for $E$.
\end{lemma}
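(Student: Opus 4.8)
The plan is to argue by contradiction on the boundedness of $\{\lambda_n\}$, exploiting two identities obtained by testing \eqref{eq4.17} against $v_n$. The test gives the Nehari-type relation $\int_D|\nabla v_n|^2+\lambda_n c^2=\tau_n\int_D|x|^\mu f(v_n)v_n\,\dif x$, and combining it with the definition of $E_{\tau_n}$ yields
\[
2\tilde m_n=2E_{\tau_n}(v_n)=\tau_n\int_D|x|^\mu\left(f(v_n)v_n-2F(v_n)\right)\dif x-\lambda_n c^2 .
\]
First I would rule out $\lambda_n\to-\infty$. By ($\mathbf{A_1}$)--($\mathbf{A_2}$) the map $s\mapsto f(s)s-2F(s)$ is continuous, vanishes at $0$, and behaves like $a_0(1-2/q)s^q$ at infinity, hence is bounded below on $[0,+\infty)$; since $\tau_n\ge\frac12$, the integral term above is then bounded below uniformly in $n$. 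If $\lambda_n\to-\infty$ the right-hand side would diverge to $+\infty$, contradicting the assumed boundedness of $\tilde m_n$.

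Next I would rule out $\lambda_n\to+\infty$, which is where the blow-up machinery of Lemmas \ref{lem4.7}--\ref{lem4.10} enters. Assuming $\lambda_n\to+\infty$ along a subsequence, Lemma \ref{lem4.10} provides the pointwise decay $v_n(x)\le C\lambda_n^{1/(q-2)}\sum_{i=1}^k e^{-\gamma\lambda_n^{1/2}|x-P_n^i|}$ with $k\le 2$. Squaring, using $|x|^\mu\le C_\mu:=\max(a^\mu,b^\mu)$ on the annulus, the elementary inequality $(\sum_i a_i)^2\le k\sum_i a_i^2$, and the scaling $\int_{\mathbb{R}^M}e^{-2\gamma\lambda_n^{1/2}|y|}\dif y=C\lambda_n^{-M/2}$, the mass constraint forces
\[
c^2=\int_D|x|^\mu v_n^2\,\dif x\le C\,\lambda_n^{\frac{2}{q-2}-\frac{M}{2}} .
\]
Since $q>2^{\#}_M$ gives $\frac{2}{q-2}-\frac{M}{2}<0$, the right-hand side tends to $0$, contradicting $c^2>0$. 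Therefore $\{\lambda_n\}$ is bounded.

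With $\{\lambda_n\}$ bounded, boundedness of $\{v_n\}$ in $H_0^1(D)$ follows from the same two identities: the energy relation shows $\int_D|x|^\mu(f(v_n)v_n-2F(v_n))\,\dif x$ is bounded, and the sharper lower bound $f(s)s-2F(s)\ge c_1 s^q-C$ (again from ($\mathbf{A_2}$)) then bounds $\int_D|x|^\mu v_n^q\,\dif x$; inserting $f(v_n)v_n\le C(v_n^2+v_n^q)$ into the Nehari relation bounds $\int_D|\nabla v_n|^2\,\dif x$. Finally, to see that $\{v_n\}$ is a Palais--Smale sequence for $E=E_1$, I would compute, for $w\in T_{v_n}\S_c$ (so $\int_D|x|^\mu v_n w\,\dif x=0$), that $E'(v_n)[w]=(\tau_n-1)\int_D|x|^\mu f(v_n)w\,\dif x$; since $\tau_n\to1^-$ and $\{v_n\}$ is bounded, this is $o_n(1)\|w\|$, while $E(v_n)=\tilde m_n+(\tau_n-1)\int_D|x|^\mu F(v_n)\,\dif x$ stays bounded. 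The main obstacle is the case $\lambda_n\to+\infty$: essentially all the hard work lies in the blow-up analysis already carried out in Lemmas \ref{lem4.7}--\ref{lem4.10}, and the delicate point here is to convert the pointwise exponential decay of Lemma \ref{lem4.10} into the mass bound whose exponent changes sign precisely at the reduced mass-critical threshold $2^{\#}_M$.
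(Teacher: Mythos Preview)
Your proof is correct and in places more economical than the paper's. The core step---ruling out $\lambda_n\to+\infty$ via the exponential decay of Lemma~\ref{lem4.10} and the mass constraint---is the same idea as in the paper, though you integrate the decay estimate directly over $D$ rather than passing through the localised limits of Lemma~\ref{lem4.8}(v). Two steps differ substantively. First, for the lower bound on $\lambda_n$ you use the energy identity $2\tilde m_n+\lambda_n c^2=\tau_n\int_D|x|^\mu(f(v_n)v_n-2F(v_n))\,\dif x$ together with the hypothesis that $\tilde m_n$ is bounded, whereas the paper tests the equation against the first Dirichlet eigenfunction $\varphi_1$ and concludes without invoking the energy bound. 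Second, and more significantly, once $\{\lambda_n\}$ is bounded you obtain the $H_0^1$-bound on $\{v_n\}$ by pure algebra on the two identities (bounding $\int|x|^\mu v_n^q$ from the energy relation and then $\int|\nabla v_n|^2$ from the Nehari relation), while the paper runs a \emph{second} blow-up argument: it shows $\|v_n\|_{L^\infty}\to+\infty$, rescales, and appeals to Liouville-type nonexistence results for finite-Morse-index solutions of $-\Delta\bar v=a_0|P|^\mu\bar v^{q-1}$ in $\mathbb{R}^M$ or a half-space. Your route here is considerably more elementary and self-contained; the paper's route, on the other hand, does not require the asymptotic lower bound $f(s)s-2F(s)\ge c_1 s^q-C$ and would still work under weaker structural assumptions on $f$.
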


\begin{proof}
    Since $\left\{v_n\right\} \subset H_0^1(D)$ is a positive solution to \eqref{eq4.17} for each $n$, multiplying \eqref{eq4.17} by the first eigenfunction $\varphi_1$ of $-\Delta$ in $H_0^1(D)$ and integrating over $D$, we obtain
    \begin{equation*}
        \begin{aligned}
            \lambda_n \int_{D} |x|^\mu v_n \varphi_1 \dif x  = \int_{D} \nabla v_n \nabla \varphi_1 \dif x+\tau_n \int_{D} |x|^\mu f\left(v_n\right) \varphi_1 \dif x 
             \geq \int_{D} \nabla v_n \nabla \varphi_1 \dif x 
             = \lambda_1 \int_{D} v_n \varphi_1 \dif x,
        \end{aligned}
    \end{equation*}
    where $\lambda_1>0$ is the first eigenvalue of $-\Delta$ in $H_0^1(D)$. Using the fact that $|x|^{-\mu}$ is bounded from above in $D$, we can deduce that $\lambda_n$ is bounded from below.
    
    Now, suppose that on the contrary we have $\lambda_n \to +\infty$, up to a subsequence. Let $P_n^i$ denotes the local maximum point of the rescaled blow-up function $V_n^i$ as defined in Lemma \ref{lem4.8} , and $P^i$ denotes the corresponding limit point, where $i=1,2$.

    We first claim that, for any $R > 0$, we have
    \begin{equation}\label{eq5.1}
        \left|\lambda_n^{\frac{M}{2}-\frac{2}{q-2}} \int_{D} |x|^\mu v_n^2 \dif x-\sum_{i=1}^k \int_{B_R\left(0\right)}|\lambda_n^{-\frac{1}{2}}x + P_n^i|^\mu \left(V_n^i\right)^2 \dif x\right| \rightarrow+\infty .
    \end{equation}
    Indeed, we have     
    \begin{equation}\label{eq5.2}
        \lambda_n^{\frac{M}{2}-\frac{2}{q-2}} \int_D |x|^\mu u_n^2 \dif x=\lambda_n^{\frac{M}{2}-\frac{2}{q-2}} c^2 \rightarrow+\infty
    \end{equation}
    while using Lemma \ref{lem4.8} (v), we deduce 
    \begin{equation*}
        \sum_{i=1}^k \int_{B_R\left(0\right)}|\lambda_n^{-\frac{1}{2}}x + P_n^i|^\mu \left(V_n^i\right)^2 \dif x \rightarrow \sum_{i=1}^k \int_{B_R\left(0\right)} |P^i|^\mu \left(V^i\right)^2 \dif x
    \end{equation*}
    which together with \eqref{eq5.2} shows that \eqref{eq5.1} holds.

    On the other hand, from Lemma \ref{lem4.10} we may deduce that
    \begin{equation*}
        \begin{aligned}
            & \left|\lambda_n^{\frac{M}{2}-\frac{2}{q-2}} \int_{D} |x|^\mu v_n^2 \dif x-\sum_{i=1}^k \int_{B_R\left(0\right)}|\lambda_n^{-\frac{1}{2}}x + P_n^i|^\mu \left(V_n^i\right)^2 \dif x\right| \\
            = &\lambda_n^{\frac{M}{2}-\frac{2}{q-2}} \left| \int_{D} |x|^\mu v_n^2 \dif x - \sum_{i=1}^k \int_{B_{R \lambda_n^{-1 / 2}}\left(P_n^i\right)}|x|^\mu v_n^2 \dif x \right| \\
            = & \lambda_n^{\frac{M}{2}-\frac{2}{q-2}}  \int_{D \backslash \bigcup_j B_{R \lambda_n^{-1 / 2} }\left(P_n^j\right)} |x|^\mu v_n^2 \dif x \\
            \leq & C \lambda_n^{\frac{M}{2}}  \sum_{i=1}^k \int_{D \backslash \bigcup_j B_{R \lambda_n^{-1 / 2}} \left(P_n^j\right)} e^{-\gamma \lambda_n^{1 / 2} \left|x-P_n^i\right|} \dif x \\
            \leq & C \lambda_n^{\frac{M}{2}} \int_{R \lambda_n^{-1 / 2} }^{+\infty} e^{-2 \gamma \lambda_n^{1 / 2}  y} d y \\
            \leq & C \int_R^{+\infty} e^{-2 \gamma z} d z \\
            \leq & C e^{-2 \gamma R}.
        \end{aligned}
    \end{equation*}
    This implies that
    \begin{equation*}
        \limsup _{n \rightarrow+\infty}\left|\lambda_n^{\frac{M}{2}-\frac{2}{q-2}} \int_{D} |x|^\mu v_n^2 \dif x-\sum_{i=1}^k \int_{B_R\left(0\right)}|\lambda_n^{-\frac{1}{2}}x + P_n^i|^\mu \left(V_n^i\right)^2 \dif x\right| \leq C e^{-2 \gamma R},
    \end{equation*}
    which provides a contradiction with \eqref{eq5.1}. Therefore, the sequence $\left\{\lambda_n\right\}$ is bounded.

    Now we prove the boundedness of $\left\{v_n\right\}$ in $H_0^1(D)$. Suppose on the contrary that $\left\|v_n\right\| \rightarrow+\infty$ as $n \rightarrow+\infty$. Multiplying \eqref{eq4.17} by $v_n$ and integrating over $D$, we obtain
    \begin{equation*}
        \int_{D} \left|\nabla v_n\right|^2 \dif x + \lambda_n \int_{D} |x|^\mu v_n^2 \dif x = \tau_n \int_{D} |x|^\mu f\left(v_n\right) v_n \dif x.
    \end{equation*}
    Thus by ($\mathbf{A_1}$) and ($\mathbf{A_2}$), for $\mu \geq 0$ we have
    \begin{equation*}
        \begin{aligned}
            \int_{D} \left|\nabla v_n\right|^2 \dif x & = \tau_n \int_{D} |x|^\mu f\left(v_n\right) v_n \dif x - \lambda_n \int_{D} |x|^\mu v_n^2 \dif x \\
            & = \tau_n \int_{D} |x|^\mu f\left(v_n\right) v_n \dif x - \lambda_n c^2 \\
            & \leq \tau_n b^\mu (C_1 ||v_n||_{L^2(D)}^2 + C_2 ||v_n||_{L^q(D)}^q) - \lambda_n c^2 \\
            & \leq \tau_n b^\mu |D| (C_1 ||v_n||_{L^{\infty}(D)}^2 + C_2 ||v_n||_{L^{\infty}(D)}^{q}) - \lambda_n c^2. 
        \end{aligned}
    \end{equation*}
    Thus by the boundedness of $\left\{\lambda_n\right\}$, we get $\left\|v_n\right\|_{L^{\infty}(D)} \rightarrow+\infty$ as $n \rightarrow+\infty$. The case $\mu<0$ can be treated similarly.
    Let $P_n \in D$ be such that $v_n\left(P_n\right)=\left\|v_n\right\|_{L^{\infty}(D)}$. Define $\bar{\varepsilon}_n=v_n\left(P_n\right)^{-\frac{q-2}{2}}$ and the blow-up sequence
    \begin{equation*}
        \bar{v}_n(y):=\bar{\varepsilon}_n^{\frac{2}{q-2}} v_n\left(\bar{\varepsilon}_n y+P_n\right).
    \end{equation*}
    Then $\bar{v}_n$ satisfies
    \begin{equation*}
        \begin{cases}
            -\Delta \bar{v}_n  +\lambda_n \bar{\varepsilon}_n^2 |\bar{\varepsilon}_n y+P_n|^\mu \bar{v}_n = \tau_n |\bar{\varepsilon}_n y+P_n|^\mu \frac{f\left(\bar{\varepsilon}_n^{-\frac{2}{q-2}} \bar{v}_n\right)}{\left(\bar{\varepsilon}_n^{-\frac{2}{q-2}} \bar{v}_n\right)^{q-1}} \bar{v}_n^{q-1} \quad \text { in } D_n, \\
            \bar{v}_n  >0 \quad \text { in } D_n, \\
            \bar{v}_n  =0 \quad \text { on } \partial D_n,
        \end{cases}
    \end{equation*}
    where $D_n=\left\{y \in \mathbb{R}^{M}: \bar{\varepsilon}_n y+P_n \in D\right\}$. Similar to the proof of Lemma \ref{lem4.8}, up to a subsequence we may assume that $\bar{v}_n \rightarrow \bar{v}$ in $C_{\mathrm{loc}}^1\left(H\right)$ as $n \rightarrow+\infty$, where $H$ is either $\mathbb{R}^{M}$ or a half space of $\mathbb{R}^{M}$. Moreover, $\bar{v}$ satisfies
    \begin{equation*}
        \begin{cases} 
            -\Delta \bar{v} = a_0 |P|^\mu \bar{v}^{q-1} \quad \text { in } H, \\
            \bar{v}  >0 \quad \text { in } H, \\
            \bar{v}  =0 \quad \text { on } \partial H,
        \end{cases}
    \end{equation*}
    where $P=\lim \limits_{n \rightarrow+\infty} P_n \in \bar{D}$, and $m(\bar{v}) \leq 2$. By the Liouville-type results in \cite{Bahri1992, Farina2007} we know that such problem does not admit any positive solution with finite Morse index, which is a contradiction. Therefore, the sequence $\left\{v_n\right\}$ is bounded in $H_0^1(D)$.  
\end{proof}

\begin{proof}[Proof of Theorem \ref{thm1.2} (iii): The mountain pass type solution]
    Using Lemma \ref{lem4.6},  we can deduce that for any $c \in \left(0, c_2\right)$, there exists a sequence $\tau_n \rightarrow 1^{-}$ and a corresponding sequence $\left\{v_n\right\} \subset \mathcal{S}_c$ which are mountain pass type critical points of $E_{\tau_n}$, such that $E_{\tau_n}\left(v_n\right)=\tilde{m}_{\tau_n}$ and $m\left(v_n\right) \leq 2$. Now we may apply Lemma \ref{lem4.11}, it follows that $\left\{v_n\right\}$ is a bounded Palais-Smale sequence for $E$. Then by Lemma \ref{lem4.2} for the functional $E_{\tau_n}$, we obtain the conclusion.
\end{proof}

\section{Comments and open problems}

In this paper, we have studied the existence of normalized solutions for the nonlinear Schrödinger equation with a large class of nonlinearities involving the Sobolev supercritical growth on an annulus in some particular dimensions. A natural and interesting question is whether these results can be extended to other cases. We will give some comments on these problems below, which will be discussed in our following works.

(i) In \cite{Ruf2014} the authors have given a reduction method for any even dimension $N \geq 4$. Indeed, there exists the following higher dimensional Hopf fibrations:
\begin{equation*}
   \begin{tikzcd}[column sep=large, row sep=large]
    S^{1} \arrow[r] & S^{2n+1} \arrow[d, "\pi"] \\
    & \mathbb{CP}^{\,n}
    \end{tikzcd}
    \qquad
    \begin{tikzcd}[column sep=large, row sep=large]
    S^{3} \arrow[r] & S^{4n+3} \arrow[d, "\pi"] \\
    & \mathbb{HP}^{\,n}
    \end{tikzcd}
    \qquad
    \begin{tikzcd}[column sep=large, row sep=large]
    S^{7} \arrow[r] & S^{15} \arrow[d, "\pi"] \\
    & \mathbb{OP}^{\,1}
    \end{tikzcd} 
\end{equation*}
Using these fibrations, the problem \eqref{eq:RPP} can be reduced to the following problem:
\begin{equation}\label{eq:5.1}
    \begin{cases}
        -\Delta v+ \lambda \frac{1}{(q s)^{2-\frac{2+\alpha}{q}}} v = \frac{1}{(q s)^{2-\frac{2+\alpha}{q}}} f(v) & \text { in } \mathcal{M}, \\
        v=0 & \text { on } \partial \mathcal{M}, \\
        \int_{\mathcal{M}} \frac{1}{(q s)^{2-\frac{2+\alpha}{q}}} v^2 \dif x = c^2,
    \end{cases}
\end{equation}
where $\mathcal{M}= (c,d) \times_g \mathbb{B}$ is a warped product manifold of dimension $m$ with $\mathbb{B}=\mathbb{CP}^{\,n}, \mathbb{HP}^{\,n}, \mathbb{OP}^{\,1}$, the warped function $g(s)= \beta s$ with $s = \pi_1(x) \in (c,d)$ denotes the projection onto the first coordinate of $\mathcal{M}$, and $\beta = \frac{n-2}{m-2}$ (see \cite{Ruf2014, Petersen2016} for more details). Notice that when $N=4,8,16$ the manifold $\mathcal{M}$ is flat and these cases have been studied in this paper. However, in other cases the manifold $\mathcal{M}$ is not flat, which brings some new difficulties:
\begin{itemize}
    \item Due to the non-homogeneous properties of the metric on $\mathcal{M}$, the prove of the local minimizer and the mountain pass geometry of the energy functional corresponding to \eqref{eq:5.1} will be more complicated.
    \item The blow-up analysis is more complicated due to the curvature of the manifold $\mathcal{M}$. 
\end{itemize}

(ii) Another reduction method for any dimension $N = 2m$ with $m \geq 2$ has been given in \cite{Pacella2014}. If one consider $\mathbb{R}^{2m}$ as the product of two copies $\mathbb{R}^m \times \mathbb{R}^m$ and consider the doubly symmetric function in $\mathbb{R}^{2m}$:
\begin{equation*}
    u\left(y_1, y_2\right)=u\left(\left|y_1\right|,\left|y_2\right|\right), \quad y_i \in \mathbb{R}^m, i=1,2,
\end{equation*}
then the problem \eqref{eq:RPP} can be reduced to the following problem in $\mathbb{R}^{m+1}$:
\begin{equation}\label{eq:5.2}
    \begin{cases}
        -\Delta v+ \lambda \frac{1}{2|z|} v = \frac{1}{2|z|} f(v) & \text { in } D, \\
        v=0 & \text { on } \partial D, \\
        \int_{D} \frac{1}{2|z|} v^2 \dif z = c^2,
    \end{cases}
\end{equation}
where $D = \left\{x \in \mathbb{R}^{m+1}: a^2/2 < |z| < b^2/2\right\}$.
Notice that in this case the function space of $v$ is \begin{equation*}
    H_{\#}(D)=\left\{v \in H_0^1(D): v\left(z_1, \ldots, z_m, z_{m+1}\right)=v\left(\sqrt{z_1^2+\cdots+z_m^2},\left|z_{m+1}\right|\right)\right\},
\end{equation*}
which makes the blow-up analysis more complicated. Indeed, if one wants to use the blow-up analysis in Section 4, due to the axial symmetry of the function, if one local maximum point $P_n$ is not at the $z_{m+1}$-axis, there must be infinite many local maximum points, which makes the energy estimate impossible. A method to overcome this difficulty is to consider the function $v$ as a function in the two dimensional space $\mathbb{R}^2$, i.e., $v(y)=v\left(r,s\right)$ with $r = \sqrt{z_1^2+\cdots+z_m^2}$ and $s = z_{m+1}$. Then the problem will be transformed to the following problem in two dimension
\begin{equation}\label{eq:5.3}
    \begin{cases}
        -v_{rr}-v_{ss}-\frac{1}{r} v_r -\frac{1}{s} v_s + \lambda \frac{1}{2\sqrt{r^2+s^2}} v = \frac{1}{2\sqrt{r^2+s^2}} f(v) & \text { in } \Omega, \\
        v > 0 & \text { in } \Omega, \\
        v = 0 & \text { on } D_1 \cup D_2, \\
        \partial v / \partial \nu = 0 & \text { on } D_3 \cup D_4, \\
        \int_{\Omega} \frac{1}{2\sqrt{r^2+s^2}} v^2 r \dif r \dif s = c^2,
    \end{cases}
\end{equation}
where $\Omega = \left\{(r,s) \in \mathbb{R}^2: a^2/2 < \sqrt{r^2+s^2} < b^2/2, r>0, s>0 \right\}$with boundary $\partial \Omega = D_1 \cup D_2 \cup D_3 \cup D_4$ as shown in the following figure.
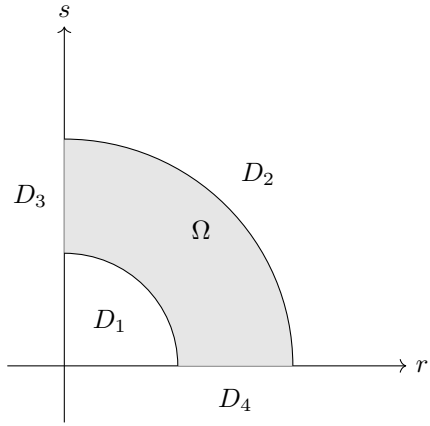
\begin{figure}[ht]
    \centering
    \begin{tikzpicture}[scale=1.5]
        % Draw the axes
        \draw[->] (-0.5, 0) -- (3, 0) node[right] {$r$};
        \draw[->] (0, -0.5) -- (0, 3) node[above] {$s$};
        
        % Draw the quarter circle
        \draw[thick] (0, 2) arc[start angle=90, end angle=0, radius=2];
        \draw[thick] (0, 1) arc[start angle=90, end angle=0, radius=1];
        
        % Shade the region
        \fill[gray!20] (0, 1) arc[start angle=90, end angle=0, radius=1] -- (2, 0) arc[start angle=0, end angle=90, radius=2] -- cycle;
        
        % Label the boundaries
        \node at (1.2, 1.2) {$\Omega$};
        \node at (0.4, 0.4) {$D_1$};
        \node at (1.7, 1.7) {$D_2$};
        \node at (-0.3, 1.5) {$D_3$};
        \node at (1.5, -0.3) {$D_4$};
    \end{tikzpicture}
    \caption{The domain $\Omega$ in the $(r,s)$-plane.}
    \label{fig:domain}
\end{figure}
Problem \eqref{eq:5.3} is a two dimensional problem with mixed boundary conditions, so a novel blow-up analysis with mixed boundary conditions need to be developed to deal with this problem. This will involve careful consideration of the behavior of solutions near the boundary, the interaction between the Dirichlet and Neumann parts of the boundary, and the new exponential decay estimates away from the blow-up points, which are all challenging problems. We will discuss these problems in the following works.

(iii) Finally, it is also interesting to study the problem \eqref{eq:DP} in the exterior domain, i.e., $\Omega = \mathbb{R}^N \backslash \overline{B_a(0)}$ with $a>0$, $N=4,8,16$. The reduction method in Section 2.1 is still valid in this case, so the problem may be reduced to the following problem:
\begin{equation*}
    \begin{cases}
        -\Delta v + \frac{\lambda}{2 |x|} v = \frac{1}{2 |x|} v^{p-1} & \text { in } \mathbb{R}^M \backslash \overline{B_{a^2/2}(0)}, \\
        v=0 & \text { on } \partial B_{a^2/2}(0), \\
        \int_{\mathbb{R}^M \backslash \overline{B_{a^2/2}(0)}} \frac{1}{2 |x|} v^2 \dif x = c^2.
    \end{cases}
\end{equation*}
where $M=3,5,9$ respectively for $N=4,8,16$. This problem corresponds to the problem \eqref{eq:RGP} with $\mu = -1$, $f(v) = v^{p-1}$, and $b = +\infty$. However, since the domain is unbounded, it brings some new difficulties:
\begin{itemize}
    \item The compactness of the Sobolev embedding $H_0^1\left(\mathbb{R}^M \backslash \overline{B_{a^2/2}(0)}\right) \hookrightarrow L^p\left(\mathbb{R}^M \backslash \overline{B_{a^2/2}(0)}\right)$ for $2 \leq p < 2^*$ is lost, so some new techniques need to be developed to overcome this difficulty, even in the case $2 < p < 2^{\#}_M$.
    \item The eigenfunction $\varphi_1$ of $-\Delta$ in $H_0^1\left(\mathbb{R}^M \backslash \overline{B_{a^2/2}(0)}\right)$ does not exist, so the local minimizer structure and the mountain pass geometry are no longer clear. To overcome this difficulty, one may consider the problem in a large annulus $A_R$ with radius $R> a^2/2$ and then let $R \rightarrow +\infty$, like the method in \cite{Bartsch2024}. However, to pass to the limit $R \rightarrow +\infty$, firstly the uniform boundedness of the $H_0^1\left(A_R\right)$-norm of the solutions is crucial, which is very challenging to obtain. The blow-up analysis in Section 4 may be helpful to deal with this problem, but some new difficulties may arise due to the varying and unbounded domain.
\end{itemize}

\section*{Acknowledgments}
This work was partially developed while the authors were pursuing their doctoral degrees at the Academy of Mathematics and Systems Science, Chinese Academy of Sciences.
All authors would like to express their gratitude to Professor Shujie Li, Professor Yanheng Ding, and Professor Chong Li for their valuable suggestions and insightful discussions.

\begin{appendix}
    \section{The reduction process}

    In this appendix, we give the detailed reduction process from \eqref{eq2.1} to \eqref{eq:2.2} when $N=4,8,16$. First we can write the Laplace operator in $\mathbb{R}^N$ in spherical coordinates
    \begin{equation*}
        \Delta u = u_{r r} + \frac{N-1}{r} u_r + \frac{1}{r^2} \Delta_{S^{N-1}} u.
    \end{equation*}
    Now we consider the function space $H_{0,G}^1(A)$ defined in Section 2.1. By \cite{Ruf2014}, the Hopf quotient maps are harmonic morphisms, which implies that the Laplace-Beltrami operator is preserved under the group action, i.e., respectively for $N = 4,8,16$,
    \begin{equation*}
        \Delta_{S^3} \rightarrow \Delta_{\mathbb{CP}^{\,1}}, \quad \Delta_{S^7} \rightarrow \Delta_{\mathbb{HP}^{\,1}}, \quad \Delta_{S^{15}} \rightarrow \Delta_{\mathbb{OP}^{\,1}}. 
    \end{equation*}
    We want to show that the Laplace operator of a $G$-invariant function $u \in H_{0,G}^1(A)$ can be reduced to the Laplace operator of a function $v$ defined on the corresponding projective line $\mathbb{FP}^{\,1}$, i.e. we want to show 
    \begin{equation*}
        \Delta v = v_{s s} + \frac{M-1}{s} v_s + \zeta \frac{1}{s^2} \Delta_{\mathbb{FP}^{\,1}} v
    \end{equation*}
    for some constant $\zeta >0$, $s \in (c,d)$ corresponds to the radial coordinate $r \in (a,b)$ and $M = 3,5,9$ respectively for $N = 4,8,16$.
    
    Notice that if we set $s = \frac{1}{2} r^2$ and $v(s, \cdot) = u(r, \cdot)$, then the radial part of the Laplace operator can be transformed as
    \begin{equation*}
        u_{r r} + \frac{N-1}{r} u_r = r^2 [v_{s s} + \frac{M-1}{s} v_s].
    \end{equation*}
    Thus, 
    \begin{equation*}
        r^2 [v_{s s} + \frac{M-1}{s} v_s + \frac{1}{r^4} \Delta_{S^{N-1}} v] = 2s [v_{s s} + \frac{M-1}{s} v_s + \frac{1}{4 s^2} \Delta_{S^{N-1}} v].
    \end{equation*}
    So the Hopf reduction implies that the reduced Laplace operator in $(s, \mathbb{FP}^{\,1})$ coordinates is
    \begin{equation*}
        \Delta v = v_{s s} + \frac{M-1}{s} v_s + \frac{1}{4 s^2} \Delta_{\mathbb{FP}^{\,1}} v.
    \end{equation*}
    Using the stereographic projection, we can identify $\mathbb{FP}^{\,1}$ with $S^{M-1}$, so the Laplace-Beltrami operator $\Delta_{\mathbb{FP}^{\,1}}$ can be identified with $\Delta_{S^{M-1}}$. Thus, the reduced problem is
    \begin{equation*}
        \begin{cases}
            -\Delta v + \lambda \frac{1}{(2 |x|)^{2-\frac{2+\alpha}{2}}} v = \frac{1}{(2 |x|)^{2-\frac{2+\alpha}{2}}} f(v) & \text { in } D, \\
            v=0 & \text { on } \partial D, \\
            \int_{D} \frac{1}{(2 |x|)^{2-\frac{2+\alpha}{2}}} v^2 \dif x = \hat{c}^2,
        \end{cases}
    \end{equation*}
    where $D = \left\{x \in \mathbb{R}^M: a^2/2 < |x| < b^2/2\right\}$ and $\hat{c} = c\omega_{N-M}^{-\frac{1}{2}}$ with $\omega_{N-M}$ being the volume of the unit sphere in $\mathbb{R}^{N-M}$. The reduction process is complete.
\end{appendix}

\noindent
\textbf{Jian Liang}
\\[6pt]
School of Mathematical Sciences,
Laboratory of Mathematics and
Complex Systems, MOE,
Beijing Normal University, 
100875 Beijing, China
\\[6pt]
E-mail: liangjian2020@amss.ac.cn

\vspace{0.5cm}

\noindent
\textbf{Hua-Yang Wang} 
\\[6pt]
School of Mathematical Sciences,
Laboratory of Mathematics and
Complex Systems, MOE,
Beijing Normal University, 
100875 Beijing, China
\\[6pt]
E-mail:wanghuayang@amss.ac.cn

\end{document}